\documentclass[11pt]{amsart}

\usepackage{amscd,amsmath,amssymb,amsfonts,amsthm,mathrsfs}
\usepackage[a4paper,text={160mm,240mm},centering,headsep=5mm,footskip=10mm]{geometry}
\usepackage[all]{xy}

\usepackage[colorlinks,linkcolor=black,anchorcolor=black,citecolor=black]{hyperref}

\usepackage{array}
\usepackage{color}

\numberwithin{equation}{section}
\setcounter{tocdepth}{2}

\newtheorem{theorem}{Theorem}[section]
\newtheorem{lemma}[theorem]{Lemma}
\newtheorem{keylemma}[theorem]{Key lemma}
\newtheorem{proposition}[theorem]{Proposition}
\newtheorem{proposition/definition}[theorem]{Proposition/Definition}
\newtheorem{corollary}[theorem]{Corollary}
\newtheorem{definition}[theorem]{Definition}
\newtheorem{remark}[theorem]{Remark}

\newtheorem*{notat*}{Notation}

\newtheorem{conj}[theorem]{Conjecture}
\newtheorem{quest}[theorem]{Question}

\newcommand{\Z}{\mathbb{Z}}
\newcommand{\Q}{\mathbb{Q}}
\newcommand{\C}{\mathbb{C}}
\renewcommand{\O}{\mathcal{O}}
\renewcommand{\r}{\rightarrow}

\newcommand{\et}{\mathrm{\acute{e}t}}
\newcommand{\Spec}{\mathrm{Spec}}

\newcommand{\CH}{\mathrm{CH}}
\newcommand{\Pic}{\mathrm{Pic}}

\renewcommand{\exp}{\mathrm{exp}}

\title{Deformation theory of the Chow group of zero-cycles}
\author{Morten L\"uders}
\address{Fakult\"at f\"ur Mathematik, Universit\"at Regensburg, 93040 Regensburg, Germany}
\email{mortenlueders@yahoo.de}

\begin{document}
\begin{abstract}
We study the deformations of the Chow group of zero-cycles of the special fibre of a smooth scheme over a henselian discrete valuation ring. Our main tools are Bloch's formula and differential forms. As a corollary we get an algebraization theorem for thickened zero-cycles previously obtained using idelic techniques. In the course of the proof we develop moving lemmata and Lefschetz theorems for cohomology groups with coefficients in differential forms.
\end{abstract}

\thanks{The author is supported by the DFG through CRC 1085 \textit{Higher Invariants} (Universit\"at Regensburg). }
\keywords{Chow groups, zero-cycles, deformation theory, Lefschetz theorems}
\maketitle

\section{Introduction}
Let $A$ be a henselian discrete valuation ring with uniformising parameter $\pi$ and residue field $k$. Let $X$ be a smooth projective scheme over Spec$(A)$ of relative dimension $d$. Let $X_n:=X\times_A A/(\pi^n)$, i.e. $X_1$ is the special fiber and the $X_n$ are the respective thickenings of $X_1$. Let $\mathcal{K}^M_{*,X}$ (resp. $\mathcal{K}^M_{*,X_n}$) be the improved Milnor K-sheaf defined in \cite{Ke09}.

Throughout this article, unless stated otherwise, we assume that either (1) $k$ is of characteristic $0$ and $A=k[[\pi]]$ or that (2)  $A$ is the Witt ring $W(k)$ of a perfect field $k$ of ch$(k)>2$. In each of these two cases there exists a well-defined exponential map
$$\exp:\Omega^{d-1}_{X_1}\r \mathcal{K}^M_{d,X_n}$$
defined by 
$$xd\text{log}(y_1)\wedge ...\wedge d\text{log}(y_{d-1})\mapsto \{1+x\pi^{n-1},y_1,...,y_{d-1}\}$$
(see \cite[Sec. 2]{BEK14'} for (1) and \cite[Sec. 12]{BEK14} for (2)).
In these two cases we therefore get an exact sequence of sheaves
$$\Omega^{d-1}_{X_1}\rightarrow \mathcal{K}^M_{d,X_n}\rightarrow \mathcal{K}^M_{d,X_{n-1}}\rightarrow 0.$$
This exact sequence induces an exact sequence
$$H^d(X_1,\Omega^{d-1}_{X_1})\rightarrow H^d(X_1,\mathcal{K}^M_{d,X_n})\rightarrow H^d(X_1,\mathcal{K}^M_{d,X_{n-1}})\rightarrow 0$$
which we use to study the restriction map
$$\begin{xy}
  \xymatrix{
       res_{X_n}: \CH^{d}(X) \ar[r]^-{\cong}   &  
    H^d(X,\mathcal{K}^M_{d,X}) \ar[r]^-{res_{X_n}} &  H^d(X_1,\mathcal{K}^M_{d,X_n}),
  }
\end{xy} $$
assuming the Gersten conjecture for the Milnor K-sheaf $\mathcal{K}^M_{*,X}$ for the isomorphism on the left. The Gersten conjecture holds in case (1) by \cite{Ke09} and in case (2) with finite coefficients if we assume ch$(k)$ to be large enough by \cite[Prop. 1.1]{Lu17'}. Our main theorem is the following:

\begin{theorem}[Cor. \ref{cormain}]\label{maintheorem}
With the above notation, and assuming the Gersten conjecture for the Milnor K-sheaf $\mathcal{K}^M_{*,X}$, the map $$res_{X_n}:\CH^d(X)=\CH_1(X)\rightarrow H^d(X_1,\mathcal{K}^M_{d,X_n})$$ is surjective. In particular the map
$res:\CH_1(X)\rightarrow "\mathrm{lim}_n"H^d(X_1,\mathcal{K}^M_{d,X_n})$
is an epimorphism in $\text{pro-}\text{Ab}$ (for the definition see Section \ref{reldim1}).
\end{theorem}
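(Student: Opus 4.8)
The plan is to argue by induction on $n$, using the exact sequence
$$H^d(X_1,\Omega^{d-1}_{X_1})\xrightarrow{\ \exp_*\ } H^d(X_1,\mathcal{K}^M_{d,X_n})\rightarrow H^d(X_1,\mathcal{K}^M_{d,X_{n-1}})\rightarrow 0$$
displayed above, together with the fact that the maps $res_{X_n}$ for varying $n$ are compatible via the sheaf surjections $\mathcal{K}^M_{d,X_n}\to\mathcal{K}^M_{d,X_{n-1}}$. For $n=1$ we have $H^d(X_1,\mathcal{K}^M_{d,X_1})=\CH^d(X_1)=\CH_0(X_1)$ by Bloch's formula, and $res_{X_1}$ is the specialization map $\CH_1(X)\to\CH_0(X_1)$ attached to the principal Cartier divisor $X_1=\operatorname{div}(\pi)\subset X$. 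Since $A$ is henselian and $X\to\Spec A$ is smooth and proper, every closed point $x$ of $X_1$ lifts to a section $\sigma\colon\Spec A'\to X$ over $A$, where $A'/A$ is the unramified extension of henselian discrete valuation rings with residue field $\kappa(x)$: indeed $\kappa(x)/k$ is separable because $k$ is perfect in both cases (1) and (2), so $A'$ exists, and the lift exists by the infinitesimal lifting property of the smooth morphism $X\to\Spec A$ together with henselianness of $A'$. Pushing forward the fundamental cycle and using compatibility of the Gysin/specialization map with proper pushforward gives $res_{X_1}(\sigma_*[\Spec A'])=[x]$, so $res_{X_1}$ is surjective. For the inductive step, given $\alpha\in H^d(X_1,\mathcal{K}^M_{d,X_n})$ choose $\gamma\in\CH_1(X)$ with $res_{X_{n-1}}(\gamma)$ equal to the image of $\alpha$; then $\alpha-res_{X_n}(\gamma)$ lies in the image of $\exp_*$ by exactness, so it suffices to prove $\operatorname{im}(\exp_*)\subseteq\operatorname{im}(res_{X_n})$.

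The key geometric input is a Lefschetz-type statement: $H^d(X_1,\Omega^{d-1}_{X_1})$ is generated by the Gysin pushforwards $\iota_{C,*}\colon H^1(C,\mathcal{O}_C)\to H^d(X_1,\Omega^{d-1}_{X_1})$ as $C$ ranges over the smooth curves in $X_1$ cut out by $d-1$ very ample divisors. Dualizing by Serre duality via the canonical isomorphism
$$H^d(X_1,\Omega^{d-1}_{X_1})^\vee\cong H^0\bigl(X_1,(\Omega^{d-1}_{X_1})^\vee\otimes\Omega^d_{X_1}\bigr)\cong H^0(X_1,\Omega^1_{X_1})$$
and using that $\iota_{C,*}$ is adjoint to the restriction $\iota_C^*\colon H^0(X_1,\Omega^1_{X_1})\to H^0(C,\Omega^1_C)$, this is equivalent to: a global $1$-form on $X_1$ vanishing on every such complete-intersection curve is zero. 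I would deduce this from a moving/Bertini argument — at a closed point $p$ where a nonzero $\omega$ is nonzero one finds such a curve over $k$ through $p$ whose tangent direction at $p$ is general, so that $\iota_C^*\omega\neq 0$; over finite residue fields one invokes the Bertini theorems of Poonen and Gabber for high-degree hypersurfaces with base-point conditions. This is the step I expect to cost the most, and it is where the moving lemmata and Lefschetz theorems for differential-form cohomology advertised in the abstract are doing the work; in particular one also needs, as part of the same package, Gysin maps on the $\mathcal{K}^M$-cohomology of the thickenings $X_n$ and their compatibility both with $\exp_*$ and with cycle pushforward.

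Granting this, I finish as follows. Each complete-intersection curve $C\subset X_1$ lifts to a smooth projective relative curve $\mathcal{C}\subset X$ over $A$ with $\mathcal{C}_1=C$: lift the defining hypersurfaces to $\P^N_A$ and choose them generically, so that $\mathcal{C}$ has the expected dimension $2$, hence is flat over the discrete valuation ring $A$ and (by a cotangent-space count along $C$, then by flatness plus smoothness of the fibres) is smooth over $A$. On the relative curve one has $\Pic(\mathcal{C}_n)=H^1(C,\mathcal{K}^M_{1,\mathcal{C}_n})$; the nilpotent-thickening sequences, with obstruction in $H^2(C,\mathcal{O}_C)=0$, show $\Pic(\mathcal{C}_m)\to\Pic(\mathcal{C}_{m-1})$ is surjective, and $\Pic(\mathcal{C})\to\Pic(\mathcal{C}_n)$ is surjective because $\operatorname{Pic}_{\mathcal{C}/A}$ is smooth over $A$ (again $H^2=0$), $A$ is henselian, and the difference between $\operatorname{Pic}_{\mathcal{C}/A}(R)$ and $\Pic(\mathcal{C}_R)$ is controlled by $\mathrm{Br}(R)$, which is insensitive to the nilpotent thickening $A\to A/\pi^n$. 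Now write a given $\omega\in H^d(X_1,\Omega^{d-1}_{X_1})$ as $\sum_j\iota_{C_j,*}(\eta_j)$ as above, set $L_j:=\exp_{C_j}(\eta_j)\in\Pic(\mathcal{C}_{j,n})$, lift $L_j$ to $\widetilde L_j\in\Pic(\mathcal{C}_j)=\CH_1(\mathcal{C}_j)$, and put $z:=\sum_j\iota_{\mathcal{C}_j,*}\widetilde L_j\in\CH_1(X)=\CH^d(X)$. Chasing the two compatibility squares — restriction versus pushforward, and $\exp$ versus Gysin — yields $res_{X_n}(z)=\sum_j\iota_{C_j,*}\exp_{C_j}(\eta_j)=\exp_*(\omega)$, so $\exp_*(\omega)\in\operatorname{im}(res_{X_n})$, completing the induction.

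Finally, for the pro-statement: a morphism in $\mathrm{pro}\text{-}\mathrm{Ab}$ from a constant object to an inverse system all of whose structure-compatible components $\CH_1(X)\to H^d(X_1,\mathcal{K}^M_{d,X_n})$ are surjective is an epimorphism (immediate from the usual description of epimorphisms of pro-objects), so surjectivity of every $res_{X_n}$ gives that $res\colon\CH_1(X)\to"\mathrm{lim}_n"H^d(X_1,\mathcal{K}^M_{d,X_n})$ is an epimorphism.
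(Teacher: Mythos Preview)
Your proof is correct and follows a route genuinely different from the paper's.

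The paper proceeds in two stages. First (Proposition~\ref{surj2}) it establishes the case $d=2$: using the Cousin resolution it represents classes in $H^2(X_1,\Omega^1_{X_1})$ by local forms in $H^2_x(X_1,\Omega^1_{X_1})$, shows by a moving argument (Key Lemma~\ref{approximationlemma}) that every such class is equivalent to a sum of forms with \emph{simple poles} $\alpha\,df_1/(f_1f_2)$, and then writes down an explicit lift $\overline{V(\tilde f_1,\tilde f_2+\pi^{n-1}\alpha)}-\overline{V(\tilde f_1,\tilde f_2)}\in\CH_1(X)$. Second (Corollary~\ref{cormain}) it reduces arbitrary $d$ to $d=2$ by iterating a single-hypersurface Lefschetz theorem (Proposition~\ref{lefschetz}), proved via a Kodaira-type vanishing (Proposition~\ref{kodaira}). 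Crucially, that Lefschetz theorem only reaches down to surfaces: for $d=2$ the relevant vanishing $H^1(C,\mathcal{O}_C(C))=0$ would amount to $K_{X_1}\!\cdot\!C<0$, which fails for general $X_1$---and indeed the paper poses improving Proposition~\ref{lefschetz} by one degree as an open question in Section~\ref{sectionopenproblems}. This is precisely why the paper stops at surfaces and develops the simple-pole machinery there.

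You sidestep this obstruction by reducing directly to curves, not via a single-curve Lefschetz but via the weaker statement that the \emph{sum} of Gysin maps $\bigoplus_C H^1(C,\mathcal{O}_C)\to H^d(X_1,\Omega^{d-1}_{X_1})$ over smooth complete-intersection curves is surjective. Your Serre-duality argument for this---dualizing to the claim that a nonzero $\omega\in H^0(X_1,\Omega^1_{X_1})$ restricts nontrivially to some such $C$, then choosing $C$ through a point where $\omega$ is nonvanishing with generic tangent direction---is valid; the adjunction $\langle\iota_{C,*}\alpha,\beta\rangle_{X_1}=\langle\alpha,\iota_C^*\beta\rangle_C$ is the standard projection-formula compatibility of Grothendieck--Serre duality. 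Once on a relative curve $\mathcal{C}/A$, the surjectivity of $\Pic(\mathcal{C})\to\Pic(\mathcal{C}_n)$ (here $A$ is complete in both cases (1) and (2), so Grothendieck existence plus the $H^2(C,\mathcal{O}_C)=0$ obstruction argument suffices) replaces the paper's explicit cycle constructions. The compatibility squares you invoke are obtained by iterating the codimension-one diagrams of Lemma~\ref{gysinmap} along a flag of smooth intermediate complete intersections over $A$, which exist by Bertini in high degree. What your approach buys is a more conceptual proof avoiding the pole-order filtration and the moving lemma of Section~\ref{secsurj}; what the paper's approach buys is the explicit shape of the lifts, which feeds into the conjectural inverse map and filtration discussed in Section~\ref{sectionopenproblems}.
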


Theorem \ref{maintheorem} was also proved at the same time in \cite{Lu17'} using an idelic method. Let us recall the strategy of \cite{Lu17'} in order to distinguish it from the approach taken in this article. In \textit{loc. cit.} we consider the short exact sequence
$$0\r \mathcal{K}^M_{d,X\mid X_n} \r \mathcal{K}^M_{d,X} \r \mathcal{K}^M_{d,X_n} \r 0.$$
Then, using the mentioned idelic method, we show that $H^{d+1}(X,\mathcal{K}^M_{d,X\mid X_n})=0$ which implies the theorem.

In this article we first prove Theorem \ref{maintheorem} for relative surfaces (i.e. $d=2$) by constructing lifts from $H^2(X_1,\Omega^{1}_{X_1})$ to $\CH^2(X)$. This requires moving elements of $H^2(X_1,\Omega^{1}_{X_1})$ into good position (see Key lemma \ref{approximationlemma}).
We then deduce the general case using the following Lefschetz theorem: 
\begin{proposition}[Lemma \ref{gysinmap}, Proposition \ref{lefschetz}] Let $Y_1$ be a smooth hypersurface section of $X_1$ and $d=\mathrm{dim} X_1$. Then there is a map 
$$gys:H^{d-1}(Y_1,\Omega^{d-2}_{Y_1})\rightarrow H^d(X_1,\Omega^{d-1}_{X_1})$$ 
which is an isomorphism for $d\geq 4$ and surjective for $d=3$ if $Y_1$ is of high degree.
\end{proposition}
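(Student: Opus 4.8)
My plan is to build $gys$ as a connecting homomorphism in a residue sequence and then, using Serre duality, to identify its kernel and cokernel with concrete groups attached to twists of $\Omega^1_{X_1}$ and $\Omega^1_{Y_1}$; the statement then reduces to two standard vanishing results. Write $L=\O_{X_1}(Y_1)$ and let $i\colon Y_1\hookrightarrow X_1$ be the inclusion. Since $Y_1$ is a smooth divisor, the residue sequence
$$0\to\Omega^{d-1}_{X_1}\to\Omega^{d-1}_{X_1}(\log Y_1)\xrightarrow{\ \mathrm{res}\ }i_*\Omega^{d-2}_{Y_1}\to 0$$
is exact in any characteristic, and I take $gys$ to be its connecting map $H^{d-1}(Y_1,\Omega^{d-2}_{Y_1})\to H^d(X_1,\Omega^{d-1}_{X_1})$; this residue map is the weight-one piece of the Gysin pushforward in Milnor $K$-cohomology, which is what makes it the relevant map for the induction on $d$. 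Because $\dim Y_1=d-1$ we have $H^d(Y_1,\Omega^{d-2}_{Y_1})=0$, so the long exact cohomology sequence shows that $gys$ is surjective iff $H^d(X_1,\Omega^{d-1}_{X_1}(\log Y_1))=0$ and injective iff $H^{d-1}(X_1,\Omega^{d-1}_{X_1})\to H^{d-1}(X_1,\Omega^{d-1}_{X_1}(\log Y_1))$ is surjective.

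Next I would dualize. For the smooth divisor $Y_1$ the sheaf $\Omega^1_{X_1}(\log Y_1)$ is locally free of rank $d$ with determinant $\omega_{X_1}(Y_1)$, so $\Omega^{d-1}_{X_1}(\log Y_1)\cong\Omega^1_{X_1}(\log Y_1)^{\vee}\otimes\omega_{X_1}(Y_1)$ and likewise $\Omega^{d-1}_{X_1}\cong\Omega^1_{X_1}{}^\vee\otimes\omega_{X_1}$. Applying Serre duality to the two conditions above, then feeding in the twist by $L^{-1}$ of the degree-one residue sequence, $0\to\Omega^1_{X_1}\otimes L^{-1}\to\Omega^1_{X_1}(\log Y_1)\otimes L^{-1}\to i_*N^\vee_{Y_1/X_1}\to 0$, together with the conormal sequence $0\to N^\vee_{Y_1/X_1}\to\Omega^1_{X_1}|_{Y_1}\to\Omega^1_{Y_1}\to 0$, one finds (using $H^0(Y_1,N^\vee_{Y_1/X_1})=0$) that $gys$ is Serre dual to the restriction $i^*\colon H^0(X_1,\Omega^1_{X_1})\to H^0(Y_1,\Omega^1_{Y_1})$ of global $1$-forms: surjectivity of $gys$ is equivalent to $H^0(X_1,\Omega^1_{X_1}\otimes L^{-1})=0$ (injectivity of $i^*$), and bijectivity of $gys$ to bijectivity of $i^*$.

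It therefore remains to establish two vanishings. First, $H^0(X_1,\Omega^1_{X_1}\otimes L^{-1})=0$: a nonzero global $1$-form vanishing along $Y_1$ would exhibit $L$ as a subsheaf of $\Omega^1_{X_1}$, which is impossible once $\deg Y_1$ is large (compare degrees on a general complete-intersection curve). This already proves surjectivity of $gys$, hence the case $d=3$. Second, for surjectivity of $i^*$ when $d\ge4$, I would split $i^*$ through $H^0(Y_1,\Omega^1_{X_1}|_{Y_1})$ using $0\to\Omega^1_{X_1}\otimes L^{-1}\to\Omega^1_{X_1}\to i_*(\Omega^1_{X_1}|_{Y_1})\to 0$ and the conormal sequence; the two obstructions lie in $H^1(X_1,\Omega^1_{X_1}\otimes L^{-1})$ and $H^1(Y_1,N^\vee_{Y_1/X_1})$, which by Serre duality are dual to $H^{d-1}(X_1,\Omega^{d-1}_{X_1}\otimes L)$ and $H^{d-2}(Y_1,\omega_{Y_1}\otimes N_{Y_1/X_1})$. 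For $d\ge4$ the cohomological degrees $d-1\ge3$ and $d-2\ge2$ are positive, so both groups vanish by Serre vanishing once $Y_1$ is of sufficiently high degree (in characteristic $0$ one may instead quote Akizuki--Nakano and Kodaira vanishing, which give the same conclusion for every smooth ample $Y_1$, and in fact also cover $d=3$). Hence $gys$ is an isomorphism for $d\ge4$.

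The step I expect to be the real obstacle is this last one: $\Omega^1_{X_1}(\log Y_1)$ and $N_{Y_1/X_1}$ depend on $Y_1$, so the Serre-type vanishing has to be applied to a family of sheaves whose positivity gap grows with $\deg Y_1$, and the bounds must be made uniform in the degree. In the mixed-characteristic case, where Kodaira-type vanishing is not available and only Serre vanishing is at hand, this is exactly what forces the hypothesis that $Y_1$ be of high degree. Everything else — the residue and conormal sequences, the determinant computation for $\Omega^1_{X_1}(\log Y_1)$, and the diagram chases — is routine.
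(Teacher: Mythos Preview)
Your argument is correct and takes a genuinely different route from the paper's. The paper defines $gys$ via local cohomology as
\[
H^{d-1}(Y_1,\Omega^{d-2}_{Y_1})\xrightarrow{\ \omega\mapsto\omega\wedge d\log f_d\ } H^{d-1}(Y_1,R^1i^!\Omega^{d-1}_{X_1})\cong H^d_{Y_1}(X_1,\Omega^{d-1}_{X_1})\to H^d(X_1,\Omega^{d-1}_{X_1}),
\]
then filters $R^1i^!\Omega^{d-1}_{X_1}$ by pole order along $Y_1$; each graded piece is identified with $\Omega^{d-2}_{Y_1}\otimes L|_{Y_1}$, and the result follows from the Kodaira-type vanishing $H^a(Y_1,\Omega^b_{Y_1}\otimes L|_{Y_1})=0$ for $a+b>d-1$ (Proposition~\ref{kodaira}), itself proved by an induction using the conormal sequence. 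You instead realise $gys$ as the connecting map of the log-residue sequence and, by Serre duality, identify it with the transpose of $i^*\colon H^0(X_1,\Omega^1_{X_1})\to H^0(Y_1,\Omega^1_{Y_1})$, reducing everything to Serre vanishing for fixed sheaves on $X_1$.

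What each approach buys: yours is shorter and more conceptual, and in fact shows that $gys$ is an isomorphism already for $d=3$ (both obstructions $H^1(X_1,\Omega^1_{X_1}\otimes L^{-1})$ and $H^1(Y_1,N^\vee)$ vanish for $Y_1$ of high degree once one dualises and pushes down to $X_1$); the paper's filtration argument stops at surjectivity for $d=3$ because the graded-piece vanishing $H^{d-2}(\Omega^{d-2}_{Y_1}\otimes L|_{Y_1})=0$ lies exactly on the boundary of Proposition~\ref{kodaira}. Conversely, the paper's explicit local-cohomology description $\omega\mapsto\omega\wedge d\log f_d$ is precisely what is used to verify compatibility with the Milnor $K$-theory Gysin map in Lemma~\ref{gysinmap}, which is the reason the proposition is needed; you assert this compatibility but do not check it. Finally, your worry about uniform Serre vanishing as $Y_1$ varies is legitimate but easily resolved: writing $\omega_{Y_1}\otimes N_{Y_1/X_1}=\omega_{X_1}|_{Y_1}\otimes L^2|_{Y_1}$ and using the ideal sequence of $Y_1\subset X_1$ sandwiches $H^{d-2}(Y_1,\omega_{Y_1}\otimes N)$ between $H^{d-2}(X_1,\omega_{X_1}\otimes L^2)$ and $H^{d-1}(X_1,\omega_{X_1}\otimes L)$, both on the fixed scheme $X_1$ where ordinary Serre vanishing applies.
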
 

On the way we also prove the following Lefschetz theorem:
\begin{proposition}[Proposition \ref{lefschetzinjective}] Let $Y_1$ be a smooth hypersurface section of $X_1$ and $d=\mathrm{dim} X_1$. Let $i$ denote the inclusion $Y_1\hookrightarrow X_1$. Then the map $$i^*:H^{q}(X_1,\Omega^{p}_{X_1})\rightarrow H^q(Y_1,\Omega^{p}_{Y_1})$$ is an isomorphism for $p+q<d-1$ and injective for $p+q=d-1$ if $Y_1$ is of high degree.
\end{proposition}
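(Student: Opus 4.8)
The plan is to factor $i^{*}$ through the restriction of $\Omega^{p}_{X_{1}}$ to $Y_{1}$ and to analyze the two resulting maps separately by means of the standard short exact sequences attached to the smooth divisor $i\colon Y_{1}\inj X_{1}$. Write $L=\O_{X_{1}}(Y_{1})$; since $Y_{1}$ is a hypersurface section of high degree, $L$ is a large power of a fixed ample line bundle on $X_{1}$, and $L|_{Y_{1}}$ is a large power of an ample line bundle on $Y_{1}$. From the ideal-sheaf sequence
$$0\r \Omega^{p}_{X_{1}}\otimes L^{-1}\r \Omega^{p}_{X_{1}}\r i_{*}i^{*}\Omega^{p}_{X_{1}}\r 0$$
one sees (using that $i$ is affine, so $H^{q}(X_{1},i_{*}i^{*}\Omega^{p}_{X_{1}})=H^{q}(Y_{1},i^{*}\Omega^{p}_{X_{1}})$) that the first map $H^{q}(X_{1},\Omega^{p}_{X_{1}})\r H^{q}(Y_{1},i^{*}\Omega^{p}_{X_{1}})$ is an isomorphism as soon as $H^{q}(X_{1},\Omega^{p}_{X_{1}}\otimes L^{-1})=H^{q+1}(X_{1},\Omega^{p}_{X_{1}}\otimes L^{-1})=0$, and injective as soon as only the first of these vanishes. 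Next, the conormal sequence of the smooth divisor $Y_{1}$ reads $0\r L^{-1}|_{Y_{1}}\r i^{*}\Omega^{1}_{X_{1}}\r\Omega^{1}_{Y_{1}}\r 0$, and taking the two-step filtration of $\wedge^{p}$ of the (locally free) middle term gives
$$0\r \Omega^{p-1}_{Y_{1}}\otimes L^{-1}|_{Y_{1}}\r i^{*}\Omega^{p}_{X_{1}}\r\Omega^{p}_{Y_{1}}\r 0 ,$$
so the second map $H^{q}(Y_{1},i^{*}\Omega^{p}_{X_{1}})\r H^{q}(Y_{1},\Omega^{p}_{Y_{1}})$ is an isomorphism once $H^{q}(Y_{1},\Omega^{p-1}_{Y_{1}}\otimes L^{-1}|_{Y_{1}})=H^{q+1}(Y_{1},\Omega^{p-1}_{Y_{1}}\otimes L^{-1}|_{Y_{1}})=0$ and injective once the first of these vanishes. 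The composite of the two maps is $i^{*}$.

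It remains to supply the vanishing statements, and here is the one place where some care is needed: I would \emph{not} invoke Kodaira--Nakano vanishing, which fails in positive characteristic and would therefore be useless in case (2), but instead use the hypothesis that $Y_{1}$ has high degree together with Serre vanishing and Serre duality. For $X_{1}$ smooth projective over $k$ of dimension $d$ and $\mathcal E$ locally free, Serre duality gives $H^{q}(X_{1},\mathcal E\otimes L^{-1})\cong H^{d-q}(X_{1},\mathcal E^{\vee}\otimes\omega_{X_{1}}\otimes L)^{\vee}$, and the right-hand side vanishes by Serre vanishing for $d-q>0$ once $L$ is a sufficiently large power of the polarization; the analogous statement holds on $Y_{1}$, which has dimension $d-1$. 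A single degree bound handles the finitely many pairs $(p,q)$ that occur and both schemes simultaneously. Hence $H^{q}(X_{1},\Omega^{p}_{X_{1}}\otimes L^{-1})=0$ for $q\le d-1$, and $H^{q}(Y_{1},\Omega^{p-1}_{Y_{1}}\otimes L^{-1}|_{Y_{1}})=0$ for $q\le d-2$.

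Feeding these bounds into the two long exact sequences, the first map is an isomorphism for $q\le d-2$ and injective for $q\le d-1$, while the second map (which is relevant only when $p\ge 1$; for $p=0$ it is the identity since $\Omega^{-1}_{Y_{1}}=0$) is an isomorphism for $q\le d-3$ and injective for $q\le d-2$. A short bookkeeping under the constraint $p+q\le d-1$ finishes the proof: since $q\le p+q$ we always have $q\le d-1$, so the first map is injective; if $p\ge 1$ then $q\le d-1-p\le d-2$, and if moreover $p+q<d-1$ then $q\le d-2-p\le d-3$. Combining cases, the composite $i^{*}$ is an isomorphism for $p+q<d-1$ and injective for $p+q=d-1$.

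I expect the main obstacle to be precisely the characteristic-free vanishing input discussed above: replacing the customary Nakano-type argument by Serre vanishing plus Serre duality is what the "high degree" hypothesis is for, and one should verify that the required bound on $\deg Y_{1}$ can be chosen uniformly in $(p,q)$ (and, for the application, also simultaneously with the bound needed in Lemma \ref{gysinmap}/Proposition \ref{lefschetz}). Everything else is routine diagram chasing with the conormal and ideal-sheaf sequences.
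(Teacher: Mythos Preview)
Your factorisation of $i^{*}$ through $H^{q}(Y_{1},i^{*}\Omega^{p}_{X_{1}})$ and your treatment of the first map are exactly what the paper does: the ideal-sheaf sequence together with Serre duality on $X_{1}$ and Serre vanishing (applied to the \emph{fixed} sheaves $\Omega^{p}_{X_{1}}$) gives the required vanishing of $H^{q}(X_{1},\Omega^{p}_{X_{1}}\otimes L^{-1})$ for $q\le d-1$.

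The gap is in your treatment of the second map. You want $H^{q}(Y_{1},\Omega^{p-1}_{Y_{1}}\otimes L^{-1}|_{Y_{1}})=0$ for $q\le d-2$, and you propose to obtain this from Serre duality on $Y_{1}$ and Serre vanishing. But Serre vanishing reads: for a \emph{fixed} coherent sheaf $\mathcal E$ on a \emph{fixed} projective scheme, $H^{>0}(\mathcal E(m))=0$ for $m\gg 0$. Here the scheme $Y_{1}$, the sheaf $(\Omega^{p-1}_{Y_{1}})^{\vee}\otimes\omega_{Y_{1}}$ and the twist $L|_{Y_{1}}=\O_{Y_{1}}(Y_{1})$ all move together as you raise the degree of $Y_{1}$; you cannot first fix $Y_{1}$ and then independently enlarge the twist. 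So ``a single degree bound handles \dots\ both schemes simultaneously'' is precisely the assertion that needs proof, and Serre vanishing alone does not supply it.

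The paper resolves this by invoking Proposition~\ref{kodaira}: after Serre duality on $Y_{1}$ one needs $H^{d-1-q}(Y_{1},\Omega^{d-p}_{Y_{1}}(Y_{1}))=0$, and Proposition~\ref{kodaira} gives this whenever $(d-1-q)+(d-p)>d-1$, i.e.\ $p+q<d$. That proposition is proved by an induction that reduces everything to Serre vanishing on $X_{1}$ for fixed sheaves, which is exactly the uniformity you are missing. Your worry about Kodaira--Nakano in positive characteristic is well placed, but Proposition~\ref{kodaira} is not Kodaira--Nakano; it is the characteristic-free substitute valid for hypersurfaces of high degree, and it is available to you. With it in place of your direct Serre-vanishing step on $Y_{1}$, the rest of your bookkeeping goes through and coincides with the paper's proof.
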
 

The last two propositions might be well known to the expert (see also Remark \ref{remarkDI}).

\begin{remark}
One classically studies the formal deformations of Chow groups of arbitrary dimensional cycles via Bloch's formula setting $\CH^p(X_n):=H^p(X_1,\mathcal{K}^M_{p,X_n})$. In characteristic $0$ one then uses the commutative diagram with exact rows
$$\begin{xy}
  \xymatrix{
      H^p(X_1,\mathcal{K}^M_{p,X_{n+1}})   \ar[d]_{} \ar[r]^{} & H^p(X_1,\mathcal{K}^M_{p,X_n})  \ar[d] \ar[r] & H^{p+1}(X_1,\Omega^{p-1}_{X_1}) \ar[d] \\
    H^{p}(X_1,\Omega^{p}_{X_{n+1}}) \ar[r] & H^{p}(X_1,\Omega^{p}_{X_n}) \ar[r] & H^{p+1}(X_1,\Omega^{p-1}_{X_1})\oplus H^{p+1}(X_1,\Omega^{p}_{X_1})
  }
\end{xy} $$
induced by the commutative diagram with exact rows
$$\begin{xy}
  \xymatrix{
     0 \ar[r] &  \Omega^{p-1}_{X_1} \ar[d]_{} \ar[r]^{} &  \mathcal{K}^M_{p,X_{n+1}} \ar[d] \ar[r] & \mathcal{K}^M_{p,X_n} \ar[d]  \ar[r] & 0 \\
     0 \ar[r] &  \Omega^{p-1}_{X_1}\oplus \Omega^p_{X_1} \ar[r] &  \Omega^p_{X_{n+1}} \ar[r] &  \Omega^p_{X_n}  \ar[r] & 0
  }
\end{xy} $$
to find Hodge-theoretic conditions for the (formal) lifting of cycles. For more details on this approach see \cite{GG04} and \cite{BEK14'}. If $p=d$, i.e. in the case of zero-cycles, these conditions are vacuous for dimensional reasons. One can always lift a (thickened) zero-cycle to the next thickening. We may therefore concentrate on studying the algebraization properties of such thickened zero-cycles. As mentioned above, we do this through a detailed study of the group $H^{d}(X_1,\Omega^{d-1}_{X_1})$ which describes the difference between $H^d(X_1,\mathcal{K}^M_{d,X_{n+1}})$ and $H^d(X_1,\mathcal{K}^M_{d,X_{n}})$.
\end{remark}

The importance of Theorem \ref{maintheorem} comes from its relation to the following question of Colliot-Th\'el\`ene about the structure of the Chow group of zero cycles over $p$-adic fields. 
\begin{quest}\label{questCT}(\cite[Question 1.4(g)]{Co95})
Let $X_K$ be a smooth projective and connected variety over a $p$-adic field $K$. 
Let $A_0(X_K)$ denote the kernel of the degree map $\mathrm{deg}:\CH_0(X_K)\r \Z$ and $D(X_K)$ its maximal divisible subgroup. Is 
$$A_0(X_K)/D(X_K)\cong \Z^n_p\oplus (\mathrm{finite\; group})$$
for some $n\in \mathbb{N}$?
\end{quest}
In \cite[Sec. 10]{KEW16} Kerz, Esnault and Wittenberg conjecture that the induced map $$res:\CH_1(X)/p^r\rightarrow "\mathrm{lim}_n"H^d(X_1,\mathcal{K}^M_{d,X_n}/p^r)$$ is an isomorphism and note that this conjecture should imply a positive answer to Question \ref{questCT}. In Section \ref{sectionopenproblems} we explain how our approach in this article is related to the Question \ref{questCT} and the injectivity of the restriction map $res$ in the above conjecture.

Finally, in \cite{Lu17'} it is shown that for $A=W(k)$, $k$ a finite field, and $d>p$, there is an isomorphism of pro-systems
$$"\mathrm{lim}_n" H^{d}(X_1,\mathcal{K}^M_{j,X_n}/p^r)\r H^{d+j}_{\mathrm{\et}}(X_1,\mathcal{S}_r(j))$$ which relates the restriction map to the $p$-adic cycle class map. 
Here $\mathcal{S}_r(j)$ are the syntomic complexes defined in \cite{Ka87}.

\subsection*{Acknowledgement} This article is part of the author's PhD-thesis. I would like to thank my supervisor Moritz Kerz for pointing out the question of this article to me as well as a lot of help while working on it.

\section{Local cohomology and some calculations}\label{first thickening}
In this section we recall some definitions and calculations in local cohomology which we  will need later on. A standard reference for the following is \cite[Ch. IV]{Ha66}. For the convenience of the reader we recall the following definitions:
\begin{definition}
Let $X$ be a topological space and $\mathcal{F}$ a sheaf of abelian groups on $X$.
\begin{enumerate}
\item We define $\Gamma_Z(X,\mathcal{F}):=\mathrm{ker}[\Gamma(X)\r \Gamma(X-Z)].$ Let $H^i_Z(X,\mathcal{F})$ be the $i$-th right derived functor of $\Gamma_Z$. (\cite[p. 216]{Ha66}) 
\item We define $\underline{\Gamma}_Z(\mathcal{F})$ to be the sheaf whose sections on an open $U\subset X$ are given by $\Gamma_{Z\cap U}(X\cap U,\mathcal{F}|_U)$. Let $\mathcal{H}^i_Z(\mathcal{F})$ be the $i$-th right derived functor of $\underline{\Gamma}_Z$. If $i:Z\hookrightarrow X$ denotes the inclusion, we sometimes also write $R^ii^!\mathcal{F}$ for $\mathcal{H}^i_Z(\mathcal{F})$. (\cite[p. 220, Var. 3]{Ha66}) 
\item Let $x$ be a point of $X$. We define $\Gamma_x(\mathcal{F})$ to be the subgroup of $\mathcal{F}_x$ consisting of elements $\bar{s}$ which have a representative in a suitable neighborhood $U$ of $x$, whose support is $U\cap \overline{\{x\}}$. Let $H^i_x(X,\mathcal{F})$ be the $i$-th right derived functor of $\Gamma_x$. Note that $\mathcal{H}^i_{\overline{\{x\}}}(X,\mathcal{F})_x$. (\cite[p. 226, Var. 8]{Ha66}) 
\end{enumerate}
\end{definition} 

Let $X$ be a locally noetherian scheme. To any sheaf of abelian groups $\mathcal{F}$ on $X$ we can associate a coniveau complex of sheaves $$\mathcal{C}(\mathcal{F}):=\bigoplus_{x\in X^{(0)}} i_{x,*}H^{0}_x(X,\mathcal{F})\rightarrow \bigoplus_{x\in X^{(1)}} i_{x,*}H^1_x(X,\mathcal{F}))\r ...$$
where $i_{x}:x\r X$ is the natural inclusion. This complex is also called the Cousin complex of $\mathcal{F}$.
\begin{definition} A sheaf $\mathcal{F}$ on $X$ is called Cohen-Macaulay, or simply $\mathrm{CM}$, if for every $x\in X$ it holds that $H^i_x(X,\mathcal{F})=0$ for $i\neq \mathrm{codim}(x)$.
\end{definition}
Via the coniveau spectral sequence 
$$E_1^{p,q}=\bigoplus_{x\in X^{(p)}}H^{p+q}_x(X,\mathcal{F})\Rightarrow H^n(X,\mathcal{F})$$
one can easily deduce that the property of being CM for $\mathcal{F}$ is equivalent to $\mathcal{C}(\mathcal{F})$ being an acyclic resolution of $\mathcal{F}$ (see \cite[Ch. IV, Prop. 2.6]{Ha66}). In that case one can use $\mathcal{C}(\mathcal{F})$ to calculate the cohomology of $\mathcal{F}$, i.e. $H^*(X,\mathcal{F})\cong H^*(X,\mathcal{C}(\mathcal{F}))$.
Locally free sheaves are CM (see \cite[p.239]{Ha66}) so in particular the sheaf of differential forms $\Omega_{X}^{1}$ and its exterior powers $\Omega_{X}^{a}$ are CM if $X$ is a smooth variety over a field. 
\begin{lemma}\label{cal1} Let $k$ be a field and $X_1$ be a scheme of dimension $1$ over $\mathrm{Spec}(k)$. Let $x\in X_1$ be a regular closed point and $f$ a local parameter at $x$. Then 
$$\mathcal
{O}_{X_1,x}[\frac{1}{f}]/\mathcal
{O}_{X_1,x}\cong H^1_x(X_{1},\mathcal{O}_{X_1}).$$ \end{lemma}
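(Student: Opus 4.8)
The plan is to reduce the statement to a local-cohomology computation over the local ring $R:=\mathcal{O}_{X_1,x}$ and then to exploit that its maximal ideal is principal. Since $x$ is a regular closed point of the $1$-dimensional scheme $X_1$, the ring $R$ is a regular local ring of Krull dimension $1$, hence a discrete valuation ring; in particular $R$ is an integral domain with $f\neq 0$, and the local parameter $f$ generates the maximal ideal, i.e.\ $\mathfrak{m}=(f)$.

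First I would recall that $H^i_x(X_1,\mathcal{O}_{X_1})$ depends only on the local ring at $x$. Indeed, the local cohomology sheaf $\mathcal{H}^i_{\overline{\{x\}}}(\mathcal{O}_{X_1})$ is supported at the closed point $x$, its formation commutes with restriction to open subsets, and by the definitions recalled above its stalk at $x$ is $H^i_x(X_1,\mathcal{O}_{X_1})$; combining these facts (see \cite[Ch.\ IV]{Ha66}) yields a canonical identification $H^i_x(X_1,\mathcal{O}_{X_1})\cong H^i_{\mathfrak{m}}(R)$, the local cohomology of $R$ with support in $\mathfrak{m}$. Since $\mathfrak{m}=(f)$, this last group is computed by the \v{C}ech complex attached to the single element $f$, namely the two-term complex $[\,R\rightarrow R[\frac{1}{f}]\,]$ placed in cohomological degrees $0$ and $1$.

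The computation is then immediate: $H^i_{(f)}(R)=0$ for $i\geq 2$, $H^0_{(f)}(R)=\ker\big(R\rightarrow R[\frac{1}{f}]\big)$, and $H^1_{(f)}(R)=\mathrm{coker}\big(R\rightarrow R[\frac{1}{f}]\big)=R[\frac{1}{f}]/R$. As $R$ is a domain and $f\neq 0$, the localisation map $R\rightarrow R[\frac{1}{f}]$ is injective, so $H^0$ vanishes and we obtain $H^1_x(X_1,\mathcal{O}_{X_1})\cong R[\frac{1}{f}]/R=\mathcal{O}_{X_1,x}[\frac{1}{f}]/\mathcal{O}_{X_1,x}$, as claimed. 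I expect the one genuine point to be the excision/locality step identifying the scheme-theoretic group $H^i_x(X_1,\mathcal{O}_{X_1})$ with the ring-theoretic $H^i_{\mathfrak{m}}(R)$, for which I would lean on \cite[Ch.\ IV]{Ha66}; everything else is formal. Alternatively one can bypass it by shrinking to an affine open $U=\Spec(R')$ containing $x$ as a closed point with $V(f)=\{x\}$ in $U$, so that $H^i_x(U,\mathcal{O}_U)=H^i_{(f)}(R')$ by the same \v{C}ech complex, and then observing that $R'[\frac{1}{f}]/R'$ is supported at $x$, hence equal to $\mathcal{O}_{X_1,x}[\frac{1}{f}]/\mathcal{O}_{X_1,x}$.
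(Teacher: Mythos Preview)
Your proof is correct and essentially the same as the paper's: both reduce to the local ring $R=\mathcal{O}_{X_1,x}$ via excision and then compute $H^1_{(f)}(R)$ as the cokernel of $R\to R[\tfrac{1}{f}]$. The only cosmetic difference is that the paper phrases this computation via the long exact sequence of local cohomology (``Motif B'' of \cite[p.~220]{Ha66}) on $\Spec R$, whereas you invoke the two-term \v{C}ech complex for the single element $f$; these are the same calculation.
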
  
\begin{proof}
We calculate $H_x^1(X_1, \mathcal{O}_{X_1})$ locally as follows: Let $X_{1,x}:=\Spec(\O_{X_1,x})$. Applying Motif B of \cite[p.220]{Ha66} to the triple $(x,X_1,X_1-x)$, we get a short exact sequence $$H^0(X_{1,x}, \mathcal{O}_{X_1}|_{X_{1,x}})\rightarrow H^0(X_{1,x}-x,\mathcal{O}_{X_1}|_{X_{1,x}-x})\rightarrow H^1_x(X_{1,x},\mathcal{O}_{X_1}|_{X_{1,x}})\rightarrow H^1(X_{1,x}, \mathcal{O}_{X_1}|_{X_{1,x}}).$$
Since $H^1(X_{1,x}, \mathcal{O}_{X_1}|_{X_{1,x}})=0$, this gives an isomorphism $$\mathcal
{O}_{X_1,x}[\frac{1}{f}]/\mathcal
{O}_{X_1,x}\cong H^1_x(X_{1,x},\mathcal{O}_{X_1}|_{X_{1,x}}).$$ 
\end{proof}

We now turn to the higher dimensional case. Similar calculations can be found in \cite[Sec. 5]{Bl72}.
\begin{lemma}\label{cal2} Let $k$ be a field and $X_1$ be a separated scheme of dimension $d>1$ over $\mathrm{Spec}(k)$. Let $x\in X_1$ be a regular closed point and $f_1,...,f_d\in \mathfrak{m}_x$ a local parameter system at $x$. Then $H_x^d(X,\Omega_{X_1}^{d-1})$ is generated by elements of the form 
$$\frac{df_1\wedge...\wedge\hat{df_i}\wedge...\wedge df_d}{f_1^{n_1}...f_d^{n_d}}$$ modulo $\frac{df_1\wedge...\wedge\hat{df_i}\wedge...\wedge df_d}{f_1^{n_1}...\hat{f_j}...f_d^{n_d}}$ over $\mathcal{O}_{X_1,x}$. \end{lemma}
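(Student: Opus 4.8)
The plan is to reduce to a local computation at $x$ and then to evaluate the resulting local cohomology group by means of the (extended) Čech complex attached to the regular system of parameters $f_1,\dots,f_d$. Since $H^d_x(X_1,\Omega^{d-1}_{X_1})$ depends only on the restriction of $\Omega^{d-1}_{X_1}$ to a neighbourhood of $x$, I would first replace $X_1$ by $X_{1,x}:=\Spec(\O_{X_1,x})$. Writing $R:=\O_{X_1,x}$ --- a regular local ring of dimension $d$ with maximal ideal $\mathfrak{m}_x=(f_1,\dots,f_d)$ --- one has $H^d_x(X_1,\Omega^{d-1}_{X_1})\cong H^d_{\mathfrak{m}_x}(M)$ where $M:=\Omega^{d-1}_R$. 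In the situations of interest $x$ is a smooth point of $X_1$ over $k$, so $\Omega^1_R$ is free of rank $d$ with basis $df_1,\dots,df_d$ and hence $M$ is free of rank $d$ with basis $\omega_i:=df_1\wedge\cdots\wedge\widehat{df_i}\wedge\cdots\wedge df_d$ for $1\le i\le d$.

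Next I would invoke the standard fact that, $f_1,\dots,f_d$ being a set of generators of $\mathfrak{m}_x$, the complex
$$0\r M\r \bigoplus_{i}M_{f_i}\r \bigoplus_{i<j}M_{f_if_j}\r\cdots\r M_{f_1\cdots f_d}\r 0$$
computes $H^\bullet_{\mathfrak{m}_x}(M)$; in particular, taking its top cohomology,
$$H^d_x(X_1,\Omega^{d-1}_{X_1})\;\cong\; M_{f_1\cdots f_d}\Big/\sum_{j=1}^{d}M_{f_1\cdots\widehat{f_j}\cdots f_d}.$$
Because $M=\bigoplus_i R\,\omega_i$ is free, all the localizations occurring here decompose compatibly as $M_g=\bigoplus_i R_g\,\omega_i$, so this quotient is the direct sum over $i$ of copies of the top local cohomology $H^d_{\mathfrak{m}_x}(R)=R_{f_1\cdots f_d}/\sum_j R_{f_1\cdots\widehat{f_j}\cdots f_d}$, one for each $\omega_i$. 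Reading this off yields exactly the assertion: $H^d_x(X_1,\Omega^{d-1}_{X_1})$ is generated over $R$ by the classes of $\dfrac{df_1\wedge\cdots\wedge\widehat{df_i}\wedge\cdots\wedge df_d}{f_1^{n_1}\cdots f_d^{n_d}}$, and such a class vanishes whenever some exponent $n_j$ equals $0$, i.e.\ modulo the elements $\dfrac{df_1\wedge\cdots\wedge\widehat{df_i}\wedge\cdots\wedge df_d}{f_1^{n_1}\cdots\widehat{f_j}\cdots f_d^{n_d}}$.

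I do not expect a genuine obstacle here: this is the standard computation of the local cohomology of a free module at a regular closed point, parallel to Lemma \ref{cal1} and in the spirit of \cite[Sec.~5]{Bl72}. The single point that deserves a word of care is the claim that $\Omega^1_R$ is free on $df_1,\dots,df_d$ (equivalently, that $\Omega^{d-1}_{X_1}$ is locally free near $x$ with the indicated basis). This is immediate when $X_1$ is smooth over $k$ at $x$, which is the only case needed later; in general one reduces to it by passing to the completion $\widehat R$ --- local cohomology being compatible with the flat base change $R\to\widehat R$ --- and applying the Cohen structure theorem.
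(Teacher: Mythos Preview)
Your argument is correct and follows essentially the same route as the paper: a \v{C}ech computation for the top local cohomology of the (locally free) sheaf $\Omega^{d-1}$ with respect to the covering by the $D(f_i)$. The only cosmetic difference is that the paper phrases the computation as $\check{H}^{d-1}$ of the punctured spectrum and then invokes the long exact sequence (``Motif B'' in \cite{Ha66}) to identify it with $H^d_x$, whereas you use the extended \v{C}ech complex $0\to M\to\bigoplus M_{f_i}\to\cdots\to M_{f_1\cdots f_d}\to 0$ computing $H^\bullet_{\mathfrak{m}_x}(M)$ directly; these are two packagings of the same calculation.
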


\begin{proof} Let $U$ be an affine neighbourhood of $x\in X_1$. Let $\mathcal{V}=\{V_i:=U-V(f_i)\}$ be a covering of $U-x$. Then the $\check{\text{C}}$ech complex 
$$0\rightarrow \prod\Omega^{d-1}_{X_1}(V_i)\rightarrow \prod_{i\neq j} \Omega^{d-1}_{X_1}(V_i\cap V_j)\rightarrow ...\rightarrow\Omega^{d-1}_{X_1}(V_1\cap...\cap V_d)$$ 
gives an isomorphism $$\text{coker}(\prod \Omega^{d-1}(V_1\cap...\cap \hat{V_i}\cap...\cap V_d)) \rightarrow\Omega^{d-1}(V_1\cap...\cap V_d))\cong \Gamma(U,R^{d-1}j_*(\Omega^{d-1}|_{U-x})),$$ where $j$ is the inclusion $X_1-x\hookrightarrow X_1$. By Motif B of \cite[p.220]{Ha66} and since $d\geq2$ there is an isomorphism $$R^{d-1}j_*(\Omega^{d-1}|_{X_1-x})\cong \mathcal{H}^d_x(X_1,\Omega^{d-1}_{X_1}).$$ In other words, $\Gamma(U,\mathcal{H}^d_x(X,\Omega^{d-1}_{X_1}))$ is generated by elements of the form $\frac{df_1\wedge...\wedge\hat{df_i}\wedge...\wedge df_d}{f_1^{n_1}...f_d^{n_d}}$ modulo $\frac{df_1\wedge...\wedge\hat{df_i}\wedge...\wedge df_d}{f_1^{n_1}...\hat{f_j}...f_d^{n_d}}$ over $\mathcal{O}(U)$. Passing to the limit, we get the desired result.
\end{proof}

In case (1) of the introduction, i.e. for $k$ a field of characteristic $0$, $S_n=\mathrm{Spec}k[t]/(t^n)$, $S=\mathrm{Spec}k[[t]]$ and $X$ smooth, separated and of finite type over $S$, there exists a short exact sequence
$$0\rightarrow \Omega^{r-1}_{X_1}\rightarrow \mathcal{K}^M_{r,X_n}\rightarrow \mathcal{K}^M_{r,X_{n-1}}\rightarrow 0$$
(see \cite[Prop. 2.3]{BEK14'}). In particular, $\mathcal{K}^M_{r,X_n}$ is CM for all $n\geq 1$ (see \cite[Prop. 3.5]{BEK14'}). We now show analogous statements for case (2).

\begin{proposition}\label{exactsequwitt}
Let $k$ be a perfect field with $\mathrm{ch}(k)=p>2$ and let $X$ be a smooth scheme over $A:=W(k)$. Then there is an exact sequence 
\begin{equation}\label{exseqBK} 0\rightarrow\Omega^{r-1}_{X_1}/B_{n-1}\Omega^{r-1}_{X_1}\rightarrow K^M_{r,X_{n+1}}\rightarrow K^M_{r,X_n}\rightarrow 0. 
\end{equation}
\end{proposition}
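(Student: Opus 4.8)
Since the four sheaves appearing in \eqref{exseqBK} are all supported on the common underlying space $X_1$, the plan is to check exactness on stalks and thereby reduce to a statement about Milnor $K$-groups of local rings. For a point $x\in X_1$ put $R:=\mathcal O_{X,x}$; this is a regular local ring, flat over $W(k)$, with $R/pR$ essentially smooth over $k$, and $R_m:=R/p^mR$ is the local ring of $X_m$ at $x$. By construction of the improved Milnor $K$-sheaf the stalk of $\mathcal K^M_{r,X_m}$ at $x$ is $K^M_r(R_m)$, so it suffices to prove that
\[
0\longrightarrow \Omega^{r-1}_{R_1}/B_{n-1}\Omega^{r-1}_{R_1}\ \xrightarrow{\ \exp\ }\ K^M_r(R_{n+1})\longrightarrow K^M_r(R_n)\longrightarrow 0
\]
is exact, where $\exp$ is induced by the rule $a\,d\log(b_1)\wedge\dots\wedge d\log(b_{r-1})\mapsto\{1+p^n\tilde a,\tilde b_1,\dots,\tilde b_{r-1}\}$ for arbitrary lifts of $a$ and of the units $b_i$. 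If convenient one may first replace $R$ by its completion $W(k(x))[[t_1,\dots,t_d]]$, which affects neither $B_\bullet\Omega^{r-1}$, nor the defining symbols of $\exp$, nor improved Milnor $K$-theory in the range that matters.

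Surjectivity on the right, and exactness in the middle with $\exp$ understood as a map out of $\Omega^{r-1}_{R_1}$ \emph{before} dividing by $B_{n-1}$, are exactly the three-term exact sequence recalled for case (2) in the introduction, i.e.\ \cite[Sec.\ 12]{BEK14}: every unit of $R_n$ lifts to a unit of $R_{n+1}$, and the kernel of $K^M_r(R_{n+1})\to K^M_r(R_n)$ is generated by the symbols $\{1+p^n\tilde a,\tilde b_1,\dots,\tilde b_{r-1}\}$, hence equals the image of $\exp$. Thus the only point left is the identification of kernels
\[
\ker\bigl(\Omega^{r-1}_{R_1}\xrightarrow{\ \exp\ }K^M_r(R_{n+1})\bigr)\ =\ B_{n-1}\Omega^{r-1}_{R_1}.
\]

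For both inclusions the key elementary input will be the congruence $(1+p^m a)^p\equiv 1+p^{m+1}a\pmod{p^{m+2}}$, valid for all $m\geq1$ because $p>2$ (each term $\binom pi p^{mi}a^i$ with $i\geq2$ has $p$-adic valuation at least $1+2m\geq m+2$); in particular $(1+p^m a)^p\equiv1\pmod{p^{m+1}}$, so multiplication by $p$ annihilates every symbol in $\mathrm{im}(\exp)$. To get $B_{n-1}\Omega^{r-1}_{R_1}\subseteq\ker(\exp)$ I would argue by induction on $n$: one first uses the relative Steinberg relations valid in $K^M_r$ of a ring with nilpotent ideal to see that $\exp$ kills the exact forms $B_1\Omega^{r-1}_{R_1}=d\Omega^{r-2}_{R_1}$, and then uses that $\exp$ intertwines the inverse Cartier operator $C^{-1}$ with passage to $p$-th powers of symbols, so that vanishing propagates through the chain $B_1\subseteq B_2\subseteq\dots\subseteq B_{n-1}$ determined by $B_{i+1}/B_1=C^{-1}(B_i)$. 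This part is bookkeeping of the type carried out in \cite[Sec.\ 12]{BEK14}.

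The reverse inclusion $\ker(\exp)\subseteq B_{n-1}\Omega^{r-1}_{R_1}$ — that $\exp$ kills nothing beyond $B_{n-1}$ — is the substantial point, and the step I expect to be the main obstacle. The plan is to construct a left inverse, a ``$d\log$-residue'' $\mathrm{im}(\exp)\to\Omega^{r-1}_{R_1}/B_{n-1}\Omega^{r-1}_{R_1}$ sending $\{1+p^n\tilde a,\tilde b_1,\dots,\tilde b_{r-1}\}$ to the class of $a\,d\log(b_1)\wedge\dots\wedge d\log(b_{r-1})$, and to check it is well defined, i.e.\ that it respects all relations holding among these symbols in $K^M_r(R_{n+1})$; equivalently, to compute the top graded piece of the unit filtration of $K^M_r(R_{n+1})$ and identify it with $\Omega^{r-1}_{R_1}/B_{n-1}\Omega^{r-1}_{R_1}$. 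This is to be done along the lines of Bloch--Kato's computation of symbols for truncated absolutely unramified rings, together with the de Rham--Witt bookkeeping that produces exactly the subsheaves $B_i$; the appearance of $B_{n-1}$ rather than of a smaller subsheaf is forced by the iterated use of the Cartier operator at the $n$ successive levels. Granting this retraction, the previous paragraph yields the stated exact sequence; the whole statement is the analogue in case (2) of \cite[Prop.\ 2.3, Prop.\ 3.5]{BEK14'}.
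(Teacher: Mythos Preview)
Your approach is correct and essentially the same as the paper's: both reduce to stalks and identify the kernel of $K^M_r(R_{n+1})\to K^M_r(R_n)$ (i.e., the top piece $U^n$ of the unit filtration, noting $U^{n+1}=0$) with $\Omega^{r-1}_{R_1}/B_{n-1}\Omega^{r-1}_{R_1}$. The step you flag as the substantial obstacle --- the computation of this graded piece via a Bloch--Kato style residue --- is precisely what the paper cites directly from \cite[Proof of Prop.\ 12.3, Step 3]{BEK14}, so no additional work is needed beyond invoking that reference.
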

\begin{proof}
Let $R_n$ be an essentially smooth local ring over $A/\pi^n$. We define a filtration $U^iK^M_r(R_n)\subset K^M_r(R_n)$ by 
$$U^iK^M_r(R_n):=<\{1+\pi^ix,x_2,...,x_r\: |\: x\in R_n,x_2,...,x_r\in R_n^*\}>.$$
The $U^i$ fit into the following exact sequences:
$$0\rightarrow U^nK^M_r(R_{n+1})\rightarrow K^M_r(R_{n+1})\rightarrow K^M_r(R_n)\rightarrow 0.$$
By \cite[Proof of Prop. 12.3, Step 3]{BEK14} there is an isomorphism $$\Omega^{r-1}_{R_1}/B_{i-1}\Omega^{r-1}_{R_1}\cong gr^iK^M_r(R_n)\cong U^iK^M_r(R_n)/U^{i+1}K^M_r(R_n)$$ 
and since $U^{n+1}(K^M_r(R_{n+1}))=0$, this implies that $U^n(K^M_r(R_{n+1}))\cong \Omega^{r-1}_{R_1}/B_{n-1}\Omega^{r-1}_{R_1}$ and therefore the exact sequence 
$$0\rightarrow\Omega^{r-1}_{R_1}/B_{n-1}\Omega^{r-1}_{R_1}\rightarrow K^M_r(R_{n+1})\rightarrow K^M_r(R_n)\rightarrow 0. $$
\end{proof}

\begin{corollary}\label{KmnCM} Let $X$ be as in Proposition \ref{exactsequwitt}. Then the sheaf $\mathcal{K}^M_{r,X_n}$ is $\mathrm{CM}$.
\end{corollary}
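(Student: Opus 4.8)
The plan is to argue by induction on $n$, feeding the short exact sequence of Proposition \ref{exactsequwitt} into the long exact sequence of local cohomology, exactly in the spirit of \cite[Prop. 3.5]{BEK14'} in case (1). For the base case $n=1$ one needs that $\mathcal{K}^M_{r,X_1}$ is $\mathrm{CM}$. Since $X_1$ is smooth over the perfect field $k$, this is precisely the assertion that the coniveau complex $\mathcal{C}(\mathcal{K}^M_{r,X_1})$ is an acyclic resolution of $\mathcal{K}^M_{r,X_1}$, i.e.\ the Gersten conjecture for Milnor $K$-theory over $X_1$, which holds by \cite{Ke09}.

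For the inductive step assume $\mathcal{K}^M_{r,X_n}$ is $\mathrm{CM}$ and consider the exact sequence
$$0\rightarrow\Omega^{r-1}_{X_1}/B_{n-1}\Omega^{r-1}_{X_1}\rightarrow \mathcal{K}^M_{r,X_{n+1}}\rightarrow \mathcal{K}^M_{r,X_n}\rightarrow 0$$
from Proposition \ref{exactsequwitt}. I first claim that the left-hand sheaf is $\mathrm{CM}$. Since $X_1$ is smooth over the perfect field $k$, the sheaves $B_m\Omega^j_{X_1}$ and $Z_m\Omega^j_{X_1}$ are locally free $\mathcal{O}_{X_1}$-modules (via the $m$-fold Frobenius), and an iterated application of the Cartier isomorphism $Z_m\Omega^j_{X_1}/B_m\Omega^j_{X_1}\cong\Omega^j_{X_1}$ together with $d\colon\Omega^{r-1}_{X_1}/Z_1\Omega^{r-1}_{X_1}\xrightarrow{\sim}B_1\Omega^{r}_{X_1}$ exhibits $\Omega^{r-1}_{X_1}/B_{n-1}\Omega^{r-1}_{X_1}$ as a successive extension of locally free sheaves, hence itself locally free and therefore $\mathrm{CM}$ (cf.\ the discussion of $\Omega^a_{X}$ above, and \cite[Prop. 12.3]{BEK14}).

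Now fix a point $x\in X_1$ and put $c:=\mathrm{codim}(x)$. Applying the long exact sequence of the derived functors $H^\bullet_x(X_1,-)$ to the displayed short exact sequence, for every $i\neq c$ the two terms $H^i_x(X_1,\Omega^{r-1}_{X_1}/B_{n-1}\Omega^{r-1}_{X_1})$ and $H^i_x(X_1,\mathcal{K}^M_{r,X_n})$ vanish — the first by the $\mathrm{CM}$ property just established, the second by the inductive hypothesis — so that $H^i_x(X_1,\mathcal{K}^M_{r,X_{n+1}})=0$. As $x$ was arbitrary, $\mathcal{K}^M_{r,X_{n+1}}$ is $\mathrm{CM}$, completing the induction. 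There is no genuine obstacle here: the substantive inputs are Proposition \ref{exactsequwitt} (already proved) and the local freeness of the sheaves $B_m\Omega^j_{X_1}$, which is where the hypothesis that $k$ be perfect enters; the rest is a formal d\'evissage in local cohomology.
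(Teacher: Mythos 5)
Your proof is correct and follows essentially the same route as the paper: feed the short exact sequence of Proposition \ref{exactsequwitt} into the long exact sequence of local cohomology, use that $\Omega^{r-1}_{X_1}/B_{n-1}\Omega^{r-1}_{X_1}$ is locally free (hence $\mathrm{CM}$) and that $\mathcal{K}^M_{r,X_1}$ is $\mathrm{CM}$ by Kerz, and induct on $n$. The only difference is cosmetic: the paper simply cites Illusie for the local freeness of $\Omega^{r-1}_{X_1}/B_{n-1}\Omega^{r-1}_{X_1}$, whereas you sketch the Cartier-isomorphism d\'evissage behind that citation.
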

\begin{proof} 
Applying the derived functor $H^i_x(X_1,-)$ to (\ref{exseqBK}), we get the exact sequence
$$H^i_x(X_1,\Omega^{r-1}_{R_1}/B_{n-1}\Omega^{r-1}_{R_1})\rightarrow H^i_x(X_1,\mathcal{K}^M_{r,X_2})\rightarrow H^i_x(X_1,\mathcal{K}^M_{r,X_1}).$$
By \cite[Cor. 3.9, p. 572]{Il79}, the sheaf $\Omega^{r-1}_{X_1}/B_{n-1}\Omega^{r-1}_{X_1}$ is locally free and therefore $\mathrm{CM}$. The sheaf $\mathcal{K}^M_{r,X_1}$ is $\mathrm{CM}$ by \cite{Ke09} and \cite{Ke10}.
The result follows inductively.
\end{proof}

Finally we show that the maps defined in the introduction are compatible with Gysin maps, i.e. compatible with maps induced by the closed immersion of smooth subschemes.
\begin{lemma}\label{gysinmap}
Let the notation for $X$ and $X_n$ be as in the introduction. Let $Y$ (resp. $Y_n$) be a smooth (over $A$) closed subscheme of codimension $1$ of $X$ (resp. $X_n$). Let $i:Y\r X$ (resp. $i:Y_n\r X_n$) denote the inclusion. Assume the Gersten conjecture for the Milnor K-sheaves $\mathcal{K}^M_{*,X}$ and $\mathcal{K}^M_{*,Y}$. Then there are commutative diagrams
\begin{equation}\label{diag1}
\begin{gathered}
\begin{xy}
  \xymatrix{
        H^{d}(X,\mathcal{K}^M_{d,X}) \ar[r]^{res_{X_n}}   & H^{d}(X_1,\mathcal{K}^M_{d,X_n})    \\
       H^{d-1}(Y,\mathcal{K}^M_{d-1,Y})   \ar[r]^{res_{Y_n}}  \ar[u]  &   H^{d-1}(Y_1,\mathcal{K}^M_{d-1,Y_n}) \ar[u] 
  }
\end{xy} 
\end{gathered}
\end{equation}
and
\begin{equation}\label{diag2}
\begin{gathered}\begin{xy}
  \xymatrix{
       H^d(X_1,\Omega^{d-1}_{X_1})  \ar[r]  & H^{d}(X_1,\mathcal{K}^M_{d,X_n})  \\
      H^{d-1}(Y_1,\Omega^{d-2}_{Y_1})  \ar[r]  \ar[u]  & H^{d-1}(Y_1,\mathcal{K}^M_{d-1,Y_n})    \ar[u] 
  }
\end{xy} 
\end{gathered}
\end{equation}
in which the vertical maps are Gysin maps which are defined in the course of the proof.
\end{lemma}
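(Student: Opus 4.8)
The plan is to realise the four vertical arrows in (\ref{diag1}) and (\ref{diag2}) (which amount to three distinct Gysin maps, the right-hand arrows of the two diagrams coinciding) as the effect on cohomology of one construction at the level of Cousin complexes. Every sheaf occurring in the two diagrams is $\mathrm{CM}$: this holds for $\mathcal{K}^M_{d,X}$ and $\mathcal{K}^M_{d-1,Y}$ by the assumed Gersten conjecture, for $\mathcal{K}^M_{d,X_n}$ and $\mathcal{K}^M_{d-1,Y_n}$ by Corollary \ref{KmnCM} (the special-fibre cases being the unconditional Gersten conjecture over the field $k$), and for $\Omega^{d-1}_{X_1}$ and $\Omega^{d-2}_{Y_1}$ because they are locally free. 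Hence each of these sheaves is computed by its Cousin complex $\mathcal{C}(-)$, and it suffices to produce morphisms of complexes
$$\mathcal{C}(\mathcal{K}^M_{d-1,Y})\to\mathcal{C}(\mathcal{K}^M_{d,X})[1],\qquad \mathcal{C}(\mathcal{K}^M_{d-1,Y_n})\to\mathcal{C}(\mathcal{K}^M_{d,X_n})[1],\qquad \mathcal{C}(\Omega^{d-2}_{Y_1})\to\mathcal{C}(\Omega^{d-1}_{X_1})[1],$$
each understood as its pushforward to the ambient scheme along the closed immersion, then apply $H^{d-1}$ — since $\dim Y=\dim X-1$ in all three cases, one starts on the $Y$-side from $H^{d-1}$ and lands on the $X$-side in $H^d$ — and finally check that the two squares commute.

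The first step is to establish the relevant \emph{relative purity} statements. If $x$ is a point of $Y_1$ (resp.\ of $Y$, resp.\ of $Y_n$) of codimension $q$ there, it is a point of $X_1$ (resp.\ $X$, $X_n$) of codimension $q+1$, with the same residue field; I would show that there are canonical isomorphisms
$$H^{q+1}_x(X_1,\mathcal{K}^M_{d,X_n})\;\cong\;H^{q}_x(Y_1,\mathcal{K}^M_{d-1,Y_n}),$$
and likewise with $(\mathcal{K}^M_{d,X},\mathcal{K}^M_{d-1,Y})$ and with $(\Omega^{d-1}_{X_1},\Omega^{d-2}_{Y_1})$, the vanishing of the remaining local cohomology groups being exactly the $\mathrm{CM}$ property. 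For $\mathcal{K}^M_{d,X}$, $\mathcal{K}^M_{d-1,Y}$ this is the standard fact that the Gersten resolution of $X$ restricts to that of $Y$ (equivalently $R^1i^!\mathcal{K}^M_{d,X}\cong\mathcal{K}^M_{d-1,Y}$); for $\Omega^{d-1}_{X_1}$ it is a Poincar\'e-residue computation of the local kind carried out in Lemma \ref{cal2}. For the thickened sheaves, where Gersten is not available, I would argue by induction on $n$ from the short exact sequences $0\to\Omega^{d-1}_{X_1}/B_{n-1}\to\mathcal{K}^M_{d,X_{n+1}}\to\mathcal{K}^M_{d,X_n}\to0$ of Proposition \ref{exactsequwitt} (and the analogous characteristic-$0$ sequence with kernel $\Omega^{d-1}_{X_1}$, \cite{BEK14'}), the corresponding sequences on $Y_1$, the local freeness of $\Omega^{d-1}_{X_1}/B_{n-1}$ (\cite{Il79}), and the five lemma; this requires the maps $\exp$ and the transition maps $\mathcal{K}^M_{d,X_{n+1}}\to\mathcal{K}^M_{d,X_n}$ to be compatible with pushforward, which is checked on the explicit generators.

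Granting the purity isomorphisms, I would define the three morphisms of complexes degreewise as the inclusion of the summands indexed by points lying on the subscheme: in cohomological degree $q$, the summand $i_{x,*}H^q_x(Y_1,\mathcal{K}^M_{d-1,Y_n})$ of $\mathcal{C}(\mathcal{K}^M_{d-1,Y_n})$ for $x\in Y_1^{(q)}$ is sent, via the purity isomorphism, to the summand $i_{x,*}H^{q+1}_x(X_1,\mathcal{K}^M_{d,X_n})$ of $\mathcal{C}(\mathcal{K}^M_{d,X_n})[1]$ for the same point now regarded in $X_1^{(q+1)}$, and to zero on summands of points not on $Y_1$ (similarly for the other two). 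That this respects differentials is because (i) a point specialising from a point on $Y_1$ again lies on $Y_1$, so the Cousin differentials preserve the $Y_1$-indexed part, and (ii) under the purity identifications the induced differentials on that part agree with the Cousin differentials of the subscheme: for $\mathcal{K}^M$ this is functoriality of the tame symbol, for $\Omega$ that of the residue, and for the thickened sheaves it follows once more by the five lemma from these two cases. Passing to $H^{d-1}$ produces the Gysin maps of the statement.

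There remains the commutativity of (\ref{diag1}) and (\ref{diag2}). The horizontal maps of (\ref{diag2}) are induced by the sheaf maps $\exp:\Omega^{d-1}_{X_1}\to\mathcal{K}^M_{d,X_n}$ and $\exp:\Omega^{d-2}_{Y_1}\to\mathcal{K}^M_{d-1,Y_n}$, hence by morphisms of Cousin complexes; similarly $res_{X_n}$ and $res_{Y_n}$ are induced, on cycles and on Cousin complexes, by reduction modulo $\pi^n$. Since both the Gysin morphisms and these restriction/$\exp$ morphisms act summand-by-summand, commutativity of each square reduces to a local identity at each $x\in Y_1$ — for (\ref{diag1}), that reduction modulo $\pi^n$ commutes with the tame symbol; for (\ref{diag2}), that $\exp$ commutes with the residue — and these are the same compatibilities used above. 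I expect the main obstacle to be exactly this generic-point bookkeeping: proving relative purity for the thickened Milnor $K$-sheaves without recourse to the Gersten conjecture, and checking on the generators $\{1+x\pi^{n-1},y_1,\dots,y_{d-1}\}$ that $\exp$ and the tame symbol are compatible with pushforward along $Y_1\hookrightarrow X_1$.
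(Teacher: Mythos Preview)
Your overall architecture—produce morphisms of Cousin complexes and then check commutativity summand by summand—is reasonable, and for the pair $(\mathcal{K}^M_{d,X},\mathcal{K}^M_{d-1,Y})$ it works exactly as you describe, since the Gersten terms on both sides are Milnor $K$-groups of residue fields and the inclusion $Y^{(q)}\subset X^{(q+1)}$ really does give an inclusion of summands. The gap is in the \emph{relative purity} step for the other two pairs. The claimed isomorphism $H^{q+1}_x(X_1,\Omega^{d-1}_{X_1})\cong H^{q}_x(Y_1,\Omega^{d-2}_{Y_1})$ is false: the natural Gysin map $\omega\mapsto\omega\wedge\mathrm{dlog}(f_d)$ only identifies $\Omega^{d-2}_{Y_1}$ with the first piece $\mathcal F_1$ of an infinite increasing filtration on $R^1i^!\Omega^{d-1}_{X_1}$ whose successive quotients are $\Omega^{d-2}_{Y_1}\otimes\mathcal O_{Y_1}(Y_1)$ (this is precisely the filtration exploited in the proof of Proposition~\ref{lefschetz}). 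Concretely, for $d=2$ and a closed point $x$, the image of $H^1_x(Y_1,\mathcal O_{Y_1})$ under your map consists only of classes $\frac{\alpha\,df_2}{f_1^{n}f_2}$ with a \emph{simple} pole along $Y_1$, whereas $H^2_x(X_1,\Omega^1_{X_1})$ contains $\frac{\alpha\,df_i}{f_1^{n_1}f_2^{n_2}}$ with arbitrary $n_2$. Since your induction for the thickened sheaves $\mathcal K^M_{d,X_n}$ is built on the $\Omega$-case via the exact sequences of Proposition~\ref{exactsequwitt}, the five-lemma step collapses and purity fails there too; so the description of the Gysin morphism as ``inclusion of the $Y$-indexed summands'' is not available beyond $n=1$.

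The paper sidesteps this entirely: it never asserts purity, only a \emph{map} of sheaves $\mathcal F_Y\to R^1i^!\mathcal F_X$ given by the explicit formulas $\{y_1,\dots,y_{d-1}\}\mapsto\{y_1,\dots,y_{d-1},f_d\}$ and $\omega\mapsto\omega\wedge\mathrm{dlog}(f_d)$, then composes
\[
H^{d-1}(Y,\mathcal F_Y)\longrightarrow H^{d-1}(Y,R^1i^!\mathcal F_X)\;\cong\;H^d_Y(X,\mathcal F_X)\longrightarrow H^d(X,\mathcal F_X),
\]
the middle isomorphism coming from the local-to-global spectral sequence together with the CM property of $\mathcal F_X$ (so $R^ji^!\mathcal F_X=0$ for $j\neq 1$). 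Commutativity of (\ref{diag1}) and (\ref{diag2}) is then immediate from a commutative square of sheaf maps into $R^1i^!$, checked on the generators exactly as you anticipated. Your programme becomes correct if you downgrade ``purity isomorphisms'' to ``Gysin morphisms of sheaves into $R^1i^!$''; once you do that, the Cousin-complex picture reduces to the paper's argument, and you no longer need (nor get) the inclusion-of-summands description.
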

\begin{proof}
Let $f_d$ be a local parameter defining $Y_1$. 

We start with diagram (\ref{diag1}). There is a commutative diagram of sheaves
$$\begin{xy}
  \xymatrix{
       \mathcal{K}^M_{d-1,Y} \ar[r] \ar[d] & R^1i^!\mathcal{K}^M_{d,X} \ar[d] \\
      \mathcal{K}^M_{d-1,Y_n}  \ar[r]    & R^1i^!\mathcal{K}^M_{d,X_n}     
  }
\end{xy} $$
in which the horizontal maps are given by $\{y_1,...,y_{d-1}\}\mapsto \{y_1,...,y_{d-1}, f_d\}$. The vertical maps are induced by reduction mod $\pi^n$. By functoriality this implies that there is a commutative diagram 
$$\begin{xy}
  \xymatrix{
       H^{d-1}(Y,\mathcal{K}^M_{d-1,Y}) \ar[d] \ar[r] &     H^{d-1}(Y,R^1i^!\mathcal{K}^M_{d,X}) \ar[d] \ar[r]^-{\cong}  & H^d_{Y}(X,\mathcal{K}^M_{d,X}) \ar[d] \ar[r] & H^d(X,\mathcal{K}^M_{d,X}) \ar[d] \\
 H^{d-1}(Y_1,\mathcal{K}^M_{d-1,Y_n}) \ar[r] & H^{d-1}(Y_1,R^1i^!\mathcal{K}^M_{d,X_n})       \ar[r]^-{\cong}     & H^{d}_{Y_1}(X_1,\mathcal{K}^M_{d,X_n})     \ar[r] & H^{d}(X_1,\mathcal{K}^M_{d,X_n}). 
  }
\end{xy} $$
The isomorphisms follow from the spectral sequences
$$H^i(Y,R^ji^!\mathcal{K}^M_{d,X})\Rightarrow H^{i+j}_Y(X,\mathcal{K}^M_{d,X})$$
and
$$H^i(Y_1,R^ji^!\mathcal{K}^M_{d,X_n})\Rightarrow H^{i+j}_{Y_1}(X_1,\mathcal{K}^M_{d,X_n})$$
and the fact that $\mathcal{K}^M_{d,X}$ and $\mathcal{K}^M_{d,X_n}$ are CM. In the first case this follows from the assumption of the Gersten conjecture. In the second case this follows from \cite[Prop. 3.5]{BEK14'} and Corollary \ref{KmnCM}.

For (\ref{diag2}) notice that there is a commutative diagram of sheaves
$$\begin{xy}
  \xymatrix{
       \Omega^{d-2}_{Y_1}  \ar[r] \ar[d] & R^1i^!\Omega^{d-1}_{X_1} \ar[d] \\
      \mathcal{K}^M_{d-1,Y_n}  \ar[r]    & R^1i^!\mathcal{K}^M_{d,X_n}     
  }
\end{xy} $$
which on elements is given by 
$$\begin{xy}
  \xymatrix{
      xd\text{log}(y_1)\wedge ...\wedge d\text{log}(y_{d-2})  \ar@{|->}[r] \ar@{|->}[d] & xd\text{log}(y_1)\wedge ...\wedge d\text{log}(y_{d-2}) \wedge\mathrm{dlog}f_d  \ar@{|->}[d] \\
      \{1+x\pi^{n-1},y_1,...,y_{d-2}\}  \ar@{|->}[r]    & \{1+x\pi^{n-1},y_1,...,y_{d-2},f_d\}.
  }
\end{xy} $$
By functoriality this implies that there is a commutative diagram 
$$\begin{xy}
  \xymatrix{
       H^{d-1}(Y_1,\Omega^{d-2}_{Y_1}) \ar[d] \ar[r] &     H^{d-1}(Y_1,R^1i^!\Omega^{d-1}_{X_1}) \ar[d] \ar[r]^-{\cong}  & H^d_{Y_1}(X_1,\Omega^{d-1}_{X_1}) \ar[d] \ar[r] & H^d(X_1,\Omega^{d-1}_{X_1}) \ar[d] \\
 H^{d-1}(Y_1,\mathcal{K}^M_{d-1,Y_n}) \ar[r] & H^{d-1}(Y_1,R^1i^!\mathcal{K}^M_{d,X_n})       \ar[r]^-{\cong}     & H^{d}_{Y_1}(X_1,\mathcal{K}^M_{d,X_n})     \ar[r] & H^{d}(X_1,\mathcal{K}^M_{d,X_n}). 
  }
\end{xy} $$
The isomorphisms follow from the same spectral sequence argument as above and the fact that $\Omega^{d-1}_{X_1}$ and $\mathcal{K}^M_{d,X_n}$ are CM.
\end{proof}

\section{Pro-objects and the relative dimension $1$ case}\label{reldim1}

In this section, we quickly review the theory of pro-objects. Standard references are \cite{AM69} and \cite{SGA4}. We then prove in Theorem \ref{conjrel1} that the conjecture of Kerz, Esnault and Wittenberg mentioned in the introduction holds in the relative dimension $1$ case.

Let $\mathcal{C}$ be a category. The category of pro-objects pro-$\mathcal{C}$ in $\mathcal{C}$ is defined as follows:
A pro-object is a contravariant functor $$X: I^{\circ}\rightarrow \mathcal{C},$$ 
from a filtered index category $I$ to $\mathcal{C}$, i.e. an inverse system of objects $X_i$ in $\mathcal{C}$. We denote $X$ also by $"\text{lim}" X_i$ or $(X_i)_i$. The morphisms between two objects $X="\text{lim}" X_i$ and $Y="\text{lim}" Y_i\in$ in pro-$\mathcal{C}$ are given by 
$$\mathrm{Hom}(X,Y)=\varprojlim_j(\varinjlim_i \mathrm{Hom}(X_i,Y_j)).$$ 

There is a natural fully faithful embedding of $\mathcal{C}$ into pro-$\mathcal{C}$ which associates to an object $C\in \mathcal{C}$ the constant diagram $C$. This functor has a right adjoint pro-$\mathcal{C}\rightarrow \mathcal{C}, "\text{lim}"X_i\mapsto \varprojlim_iX_i$. If $\mathcal{C}$ has finite  direct (inverse) limits, then the functor 
$$\mathrm{Hom}(I^{\circ},\mathcal{C})\rightarrow \text{pro-}\mathcal{C}$$
commutes with finite direct (inverse) limits. In particular if $\mathcal{C}$ has finite direct and inverse limits, then the above functor is exact (see \cite[p.163]{AM69}).
 
A criterion for when a map of pro-systems is an isomorphism is given by the following proposition (see \cite[Lem. 2.3]{Is01}):

\begin{proposition}\label{isomorphismprosystems}
A level map $A\r B$ in $\mathrm{pro}$-$\mathcal{C}$, i.e. a map between pro-systems with the same index category and maps $A_s\r B_s$ for all $s\in I$, is an isomorphism if and only if for all $s$ there exists a $t\geq s$ and a commutative diagram
$$\begin{xy}
  \xymatrix{
        A_t \ar[d]_{} \ar[r]^{} &  B_t \ar[d] \ar[dl] \\
     A_s \ar[r] & B_s .
  }
\end{xy} $$
\end{proposition}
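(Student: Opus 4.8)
The plan is to argue directly from the description $\mathrm{Hom}_{\text{pro-}\mathcal{C}}(B,A)=\varprojlim_s\varinjlim_t\mathrm{Hom}(B_t,A_s)$, constructing an explicit two-sided inverse for the nontrivial implication and performing a short index chase for the other. Write $f$ for the given level map, and abbreviate the structure maps of $A$ and $B$ by ``str''; throughout, the key manipulation is that to compare two elements of a filtered colimit one first refines to a common index.

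For the ``if'' direction, suppose that for each $s$ we are handed $t=t(s)\ge s$ and $\psi_s\colon B_{t(s)}\to A_s$ with $\psi_s\circ f_{t(s)}=\mathrm{str}_A$ and $f_s\circ\psi_s=\mathrm{str}_B$ (the two triangles of the displayed diagram). First I would verify that the family $(\psi_s)_s$ is compatible in $s$, hence defines a morphism $g\in\varprojlim_s\varinjlim_t\mathrm{Hom}(B_t,A_s)=\mathrm{Hom}_{\text{pro-}\mathcal{C}}(B,A)$: given $s'\ge s$, pass to $u$ above both $t(s)$ and $t(s')$, precompose the two candidate maps $B_u\to A_s$ with $f_u$, and use naturality of $f$ together with $\psi_\bullet\circ f=\mathrm{str}_A$ to see that both become $\mathrm{str}_A\colon A_u\to A_s$; then cancel the precomposed $f_u$ inside the colimit by composing further with a diagonal map $\psi_u\colon B_w\to A_u$ and invoking $f_u\circ\psi_u=\mathrm{str}_B$. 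The identical bookkeeping shows $g$ is independent of the choices of $t(s)$ and $\psi_s$. Once $g$ is constructed, $g\circ f=\mathrm{id}_A$ and $f\circ g=\mathrm{id}_B$ are immediate: at level $s$ they are represented by $\psi_s\circ f_{t(s)}$ and $f_s\circ\psi_s$, which are structure maps by hypothesis.

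For the ``only if'' direction, suppose $f$ is an isomorphism with inverse $g$, and represent $g$ at level $s$ by some $\psi_s\colon B_{t(s)}\to A_s$, enlarging $t(s)$ so that $t(s)\ge s$ (using that $I$ is filtered). From $g\circ f=\mathrm{id}_A$, the maps $\psi_s\circ f_{t(s)}$ and $\mathrm{str}_A$ agree in $\varinjlim_t\mathrm{Hom}(A_t,A_s)$, so after enlarging $t(s)$ and replacing $\psi_s$ by its composite with a structure map of $B$ (harmless, by naturality of $f$) we may assume $\psi_s\circ f_{t(s)}=\mathrm{str}_A$ on the nose; this is the lower-left triangle. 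Then $f\circ g=\mathrm{id}_B$ forces $f_s\circ\psi_s$ to agree with $\mathrm{str}_B$ in a colimit, and one more enlargement of $t(s)$ of the same kind gives $f_s\circ\psi_s=\mathrm{str}_B$; since composing $\psi_s$ with a structure map of $B$ preserves the identity $\psi_s\circ f_{t(s)}=\mathrm{str}_A$ (again by naturality), both triangles now hold, and taking $t=t(s)$ with diagonal map $\psi_s$ produces the required diagram.

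The one place where care is needed, and the step I expect to be the real obstacle, is the compatibility and well-definedness check in the ``if'' direction: there one is forced to cancel a precomposed $f_u$ inside a filtered colimit, and this succeeds \emph{precisely} because the hypothesis supplies, at the index $u$, a map $\psi_u$ with $f_u\circ\psi_u$ equal to a structure map of $B$ — a naive attempt that only uses the first triangle stalls here. Everything else is a routine diagram chase once the indices are chosen in the right order.
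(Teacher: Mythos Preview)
The paper does not actually prove this proposition: it is stated with a reference to \cite[Lem.~2.3]{Is01} and used as a black box. So there is no ``paper's own proof'' to compare against. Your direct argument is therefore not a comparison case but a genuine supplement.

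As to correctness, your argument is sound. The crucial point is exactly the one you flag: in the ``if'' direction, to show that the family $(\psi_s)$ assembles to a morphism in $\varprojlim_s\varinjlim_t\mathrm{Hom}(B_t,A_s)$ one must check compatibility, and after precomposing with $f_u$ the two candidate maps $B_u\to A_s$ agree as maps $A_u\to A_s$; you then correctly observe that passing further to $w=t(u)$ and precomposing with $\psi_u\colon B_w\to A_u$ (so that $f_u\circ\psi_u=\mathrm{str}_B$) converts this into equality of maps $B_w\to A_s$, hence equality in the filtered colimit. This is the standard trick and it is executed correctly. The ``only if'' direction is also fine: representing the inverse at level $s$, then successively enlarging the index to make first one triangle and then the other commute on the nose (using naturality of $f$ to see the first identity survives the second enlargement) is exactly how one unwinds the colimit equalities.

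One minor stylistic remark: where you write ``given $s'\ge s$'' you are implicitly assuming $I$ is a cofiltered poset; in the general cofiltered-category setting one should phrase this in terms of a morphism $s'\to s$ in $I$, but the argument goes through verbatim.
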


\begin{theorem}\label{conjrel1} Let $k$ be a finite field of characteristic $p>2$ and $A=W(k)$ the Witt ring of $k$. Let $X$ be a smooth projective scheme of relative dimension $1$ over $A$. Then the map $$res:\CH^1(X)/p^i\rightarrow "\mathrm{lim}" H^1(X_1,\mathcal{K}^M_{1,X_n}/p^i)$$
is an isomorphism in the category of pro-systems of abelian groups. 
\end{theorem}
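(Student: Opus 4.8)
We want to show the level map of pro-systems $\CH^1(X)/p^i \to "\mathrm{lim}"_n H^1(X_1,\mathcal{K}^M_{1,X_n}/p^i)$ is an isomorphism. Since $\mathcal{K}^M_{1,X_n} = \mathcal{O}_{X_n}^\times = \mathbb{G}_{m,X_n}$, we have $H^1(X_1,\mathcal{K}^M_{1,X_n}) = \Pic(X_n)$, and $\CH^1(X) = \Pic(X)$ (using the Gersten conjecture in relative dimension $1$, which is classical for $\mathcal{K}^M_1$). So the statement is really about the pro-system $"\mathrm{lim}"_n \Pic(X_n)/p^i$ and the restriction $\Pic(X)/p^i \to "\mathrm{lim}"_n \Pic(X_n)/p^i$.

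First I would set up the exact sequence controlling the transition maps. For each $n$, the exact sequence of sheaves on $X_1$
\begin{equation*}
0 \to \Omega^0_{X_1}/B_{n-1}\Omega^0_{X_1} \to \mathcal{O}_{X_{n+1}}^\times \to \mathcal{O}_{X_n}^\times \to 0
\end{equation*}
from Proposition \ref{exactsequwitt} (with $r=1$; note $\Omega^0_{X_1} = \mathcal{O}_{X_1}$ and $B_{n-1}\mathcal{O}_{X_1}$ is the image of the $(n-1)$-fold iterated inverse Cartier/Frobenius-type operator, in any case a coherent subsheaf) gives a long exact cohomology sequence
\begin{equation*}
H^1(X_1, \mathcal{O}_{X_1}/B_{n-1}) \to \Pic(X_{n+1}) \to \Pic(X_n) \to H^2(X_1, \mathcal{O}_{X_1}/B_{n-1}).
\end{equation*}
Because $X_1$ is a smooth projective \emph{curve} over the finite field $k$, all cohomology above degree $1$ vanishes, so $\Pic(X_{n+1}) \to \Pic(X_n)$ is \emph{surjective} with kernel a quotient of the finite-dimensional $k$-vector space $H^1(X_1,\mathcal{O}_{X_1}/B_{n-1})$; in particular this kernel is a finite $p$-group. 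Surjectivity of $\CH^1(X)/p^i \to "\mathrm{lim}"_n \Pic(X_n)/p^i$ then follows from formal smoothness/Grothendieck existence: $\Pic(X) = \varprojlim_n \Pic(X_n)$ by formal GAGA for the projective scheme $X$ over the complete ring $A$, and the surjectivity of the transition maps lets one lift any compatible system. (Alternatively, one invokes Theorem \ref{maintheorem}, whose $d=1$ case gives precisely this surjectivity after reduction mod $p^i$.)

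The substance is \emph{injectivity} of the pro-map after $/p^i$, and here I would use Proposition \ref{isomorphismprosystems}: it suffices to show that for each $n$ there is $m \geq n$ so that the image of $\Pic(X)/p^i$ in $\Pic(X_m)/p^i$ already surjects onto — and the kernel is controlled by — the image in $\Pic(X_n)/p^i$, i.e. that $\ker(\Pic(X_m)/p^i \to \Pic(X_n)/p^i)$ dies. Equivalently, I must show the pro-system $\{\ker(\Pic(X)/p^i \to \Pic(X_n)/p^i)\}_n$ is pro-zero. Now $\ker(\Pic(X) \to \Pic(X_n))$ sits, via the above, inside the pro-system $\{H^1(X_1,\mathcal{O}_{X_1}/B_{n-1})\}_n$ (more precisely, the relevant subquotients), and the key point is that the transition maps in this pro-system of finite-dimensional $\mathbb{F}_p$-vector spaces are eventually (in the pro-sense) zero modulo $p^i$: this is the content of the exact sequences $0 \to B_{n-1}/B_{n-2} \to \mathcal{O}_{X_1}/B_{n-1} \to \mathcal{O}_{X_1}/B_{n-2} \to 0$ together with the fact that $\Pic^0(X_1)$ is a \emph{finite} group (curve over a finite field!), so its $p$-part is bounded — the "continuous part" of $\Pic(X)$ is $p$-adically the $\varprojlim$ of the $H^1(\mathcal{O}/B_{n-1})$'s, and killing $p^i$ leaves only a bounded piece. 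Concretely: $\Pic(X)$ is an extension of $\Z$ (by degree) by $\Pic^0(X)$, and $\Pic^0(X) = \varprojlim_n \Pic^0(X_n)$ is a pro-finite, in fact a $\pi$-adically complete, group whose mod-$p^i$ reduction stabilizes; the kernels of $\Pic(X)/p^i \to \Pic(X_n)/p^i$ therefore vanish for $n \gg_i 0$.

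**Main obstacle.** The delicate point is that $\Pic(X)/p^i \to \Pic(X_n)/p^i$ need not be injective on the nose for fixed $n$ — divisible/$\pi$-divisible phenomena in $\Pic^0(X_n)$ mean one really needs the \emph{pro}-category and Proposition \ref{isomorphismprosystems}, showing the kernel is \emph{pro-zero} rather than zero. Carrying this out cleanly requires controlling the inverse system $\{H^1(X_1,\mathcal{O}_{X_1}/B_{n-1})\}_n$ with its transition maps and identifying its $\varprojlim$ with (the unit-root / Frobenius-fixed part of) the relevant $p$-adic cohomology, then using finiteness of $\Pic^0$ over the finite field to see that mod $p^i$ everything is bounded; equivalently one can package this as: the map $\Pic(X) \to \varprojlim_n \Pic(X_n)$ is an isomorphism onto a group on which $p^i$-torsion and $p^i$-cotorsion are finite, so the Mittag-Leffler/pro-zero conclusion follows. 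I would present the argument via the exact sequence of Proposition \ref{exactsequwitt} and the vanishing $H^{\geq 2}$ on a curve, which makes all the needed surjectivities transparent and reduces injectivity to the bounded-torsion statement above.
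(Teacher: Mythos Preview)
Your setup agrees with the paper's: reduce to Picard groups, use $\Pic(X)\cong\varprojlim_n\Pic(X_n)$ from formal GAGA, and invoke Proposition \ref{isomorphismprosystems} for the pro-criterion. The surjectivity part is fine. The difference, and the problem, is in how you handle injectivity.

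The paper does \emph{not} use the one-step sequence of Proposition \ref{exactsequwitt}. Instead it uses the $p$-adic logarithm $1+p\mathcal{O}_{X_n}\xrightarrow{\sim} p\mathcal{O}_{X_n}$ to identify $\ker(\Pic(X_n)\to\Pic(X_j))$ with $H^1(X_1,p^j\mathcal{O}_{X_n})$, takes $\varprojlim_n$ and applies the theorem on formal functions to get $H^1(X,p^j\mathcal{O}_X)$, and then observes that the inclusion $p^{i+j}\mathcal{O}_X\hookrightarrow p^j\mathcal{O}_X$ is multiplication by $p^i$, hence zero mod $p^i$. That single observation makes the kernel pro-system vanish immediately.

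Your route instead asserts that $\Pic^0(X)$ is ``$\pi$-adically complete'' and that its mod-$p^i$ reduction ``stabilizes'', deducing that $\ker(\Pic(X)/p^i\to\Pic(X_n)/p^i)=0$ for $n\gg_i 0$. This is the correct target statement, but it is \emph{not justified} by what you have written. The one-step sequence from Proposition \ref{exactsequwitt} (here simply $0\to\mathcal{O}_{X_1}\to\mathcal{O}_{X_{n+1}}^\times\to\mathcal{O}_{X_n}^\times\to 0$, since $B_n\Omega^0=0$) only tells you that each graded piece of the tower $\Pic^0(X_n)$ is a copy of $H^1(X_1,\mathcal{O}_{X_1})\cong k^g$. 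That information alone does \emph{not} force $\Pic^0(X)/p^i$ to be finite or $p^i\Pic^0(X)$ to be open: a tower of surjections of finite abelian $p$-groups with graded pieces of bounded rank can perfectly well have inverse limit $\prod_{m\geq 1}\Z/p$, whose quotient by $p$ is infinite. What distinguishes the actual tower is exactly that the extensions are ``maximally non-split'' in the sense that $K_{j+1}=pK_j$ for $K_j=\ker(\Pic(X)\to\Pic(X_j))$; equivalently, that $K_j\cong H^1(X,p^j\mathcal{O}_X)$ and the transition maps are multiplication by $p$. This is precisely what the $p$-adic logarithm provides, and it is the step you are missing.

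So the gap is real: to complete your argument you must either import the $p$-adic log as the paper does, or argue via the Jacobian (identifying $\ker(\Pic^0(X)\to\Pic^0(X_1))$ with the points of the formal group, hence with $H^1(X,\mathcal{O}_X)\cong W(k)^g$) --- which is again the logarithm in disguise. Once you have $K_j\cong H^1(X,p^j\mathcal{O}_X)$, your desired conclusion $K_n\subset p^i\Pic(X)$ for $n\geq i+1$ is immediate, and the rest of your outline goes through.
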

\begin{proof}
We first note that $\CH^1(X)\cong \Pic(X)$ and that $\Pic(X)\cong \varprojlim \Pic(X_n)$ by \cite[Thm. 5.1.4]{EGA3}. Furthermore, $H^1(X_1,\mathcal{K}^M_{1,X_n})=H^1(X_1,\mathcal{O}_{X_n}^\times)\cong \Pic(X_n)$. 
It therefore suffices to show that
$$\varprojlim \Pic(X_n)\otimes\mathbb{Z}/p^i\mathbb{Z}\rightarrow "\text{lim}" \Pic(X_n)\otimes\mathbb{Z}/p^i\mathbb{Z}$$
is an isomorphism.
 
Using the $p$-adic logarithm isomorphism $1+p\mathcal{O}_{X_n}\xrightarrow{\cong} p\mathcal{O}_{X_n}$, the short exact sequence 
$$1\rightarrow (1+p^j\mathcal{O}_{X_n})\rightarrow \mathcal{O}^{\times}_{X_n}\rightarrow \mathcal{O}^{\times}_{X_j}\rightarrow 1$$
induces a short exact sequence 
$$0\rightarrow H^1(X_1,p^j\mathcal{O}_{X_n})\rightarrow H^1(X_1,\mathcal{O}_{X_n}^*)\rightarrow H^1(X_1,\mathcal{O}_{X_j}^*) \rightarrow H^2(X_1,p^j\mathcal{O}_{X_n})=0 
$$ (the last equality following for dimension reasons). Applying the Functor $\varprojlim_n$, we get an exact sequence 
$$\varprojlim_n H^1(X_1,p^j\mathcal{O}_{X_n})\rightarrow \varprojlim_n \Pic(X_n)\rightarrow \Pic(X_j)\rightarrow \varprojlim_n {^1}H^1(X_1,p^j\mathcal{O}_{X_n}).$$
Now $\varprojlim_n^1 H^1(X_1,p^j\mathcal{O}_{X_n})=0$ since the inverse system $(H^1(X_1,p^j\mathcal{O}_{X_n}))_n$ satisfies Mittag-Leffler being an inverse system of finite dimensional vector spaces. Tensoring with $\mathbb{Z}/p^i\mathbb{Z}$ gives the exact sequence
$$\varprojlim_n H^1(X_1,p^j\mathcal{O}_{X_n})\otimes\mathbb{Z}/p^i\mathbb{Z}\rightarrow \varprojlim_n \Pic(X_n)\otimes\mathbb{Z}/p^i\mathbb{Z}\rightarrow \Pic(X_j)\otimes\mathbb{Z}/p^i\mathbb{Z}\rightarrow 0.$$
We now apply the exact functor $"\varprojlim_j"$ to this sequence. By the theorem on formal functions, there is an isomorphism $$"\varprojlim_j"\varprojlim_n H^1(X_1,p^j\mathcal{O}_{X_n})\otimes\mathbb{Z}/p^i\mathbb{Z} \cong "\varprojlim_j" H^1(X,p^j\mathcal{O}_{X})\otimes\mathbb{Z}/p^i\mathbb{Z}.$$ Since the image of the inclusion $p^{i+j}\mathcal{O}_X\hookrightarrow p^j\mathcal{O}_X$ vanishes modulo $p^i$, the same holds for the image of the morphism $H^1(X,p^{i+j}\mathcal{O}_X)\rightarrow H^1(X,p^j\mathcal{O}_X)$.  By Proposition \ref{isomorphismprosystems} this implies that $$"\varprojlim_j"\varprojlim_n H^1(X_1,p^j\mathcal{O}_{X_n})\otimes\mathbb{Z}/p^i\mathbb{Z}$$ is pro-isomorphic to zero and therefore that the theorem holds.
\end{proof}

\section{Lefschetz theorems over arbitrary fields}\label{lefschetzsection}
In this section we prove a Kodaira vanishing theorem which implies a Lefschetz theorem allowing us later in Section \ref{secsurj} to reduce our main theorem to relative dimension $2$. The techniques and statements we develop work over arbitrary fields and might be well known to the expert.  

To put the following proposition into context, we recall the Kodaira vanishing theorem. A good reference is \cite{EV92}.

\begin{theorem}
Let $X$ be a complex projective manifold and $\mathcal{A}$ an ample invertible sheaf. Then
$$H^a(X,\Omega^b_X\otimes \mathcal{A})=0$$
for $a+b> \mathrm{dim} X$.
\end{theorem}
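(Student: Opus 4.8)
The displayed statement is the Akizuki--Nakano form of Kodaira vanishing (the classical case being $b=\dim X$), and since the remainder of this section works in characteristic $p$, the proof I would give is the algebraic one of Deligne--Illusie, by reduction modulo $p$. First I would spread out: the manifold $X$ and the ample sheaf $\mathcal{A}$ descend to a subfield of $\mathbb{C}$ finitely generated over $\mathbb{Q}$, so there is a smooth projective morphism $f\colon\mathcal{X}\to\Spec R$ over a finitely generated $\mathbb{Z}$-algebra $R$ together with an $f$-ample invertible sheaf $\mathcal{L}$, whose base change along some embedding $R\hookrightarrow\mathbb{C}$ recovers $(X,\mathcal{A})$. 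By generic flatness, cohomology and base change, and Serre vanishing applied uniformly over $\Spec R$, after shrinking there is a dense open $U\subseteq\Spec R$ such that for all $s\in U$: $\dim\mathcal{X}_s=\dim X=:n$, one has $\Omega^c_{\mathcal{X}_s}=\Omega^c_{\mathcal{X}/R}\otimes k(s)$, and $H^q(\mathcal{X}_s,\Omega^c_{\mathcal{X}_s}\otimes\mathcal{L}_s^{\,m})=0$ for all $q>0$, all $c$ and all $m\geq N_0$, with $N_0$ independent of $s$. It then suffices to prove $H^a(\mathcal{X}_s,\Omega^b_{\mathcal{X}_s}\otimes\mathcal{L}_s)=0$ for $a+b>n$ at one closed point $s\in U$ whose residue field $k$ has characteristic $p>\max(n,N_0)$; such $s$ exists since $\Spec R\to\Spec\mathbb{Z}$ is dominant. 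As $f$ is smooth, the fibre $X_0:=\mathcal{X}_s$ lifts over $R/\mathfrak{m}_s^2$, hence (via Cohen structure theory) over $W_2(k)$.

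Next I would run the Frobenius argument over $k$, where $p=\mathrm{char}(k)>n$ and $X_0$ lifts to $W_2(k)$. Let $F\colon X_0\to X_0':=X_0^{(p)}$ be the relative Frobenius, $W\colon X_0'\to X_0$ the projection (an isomorphism of schemes since $k$ is perfect), and put $L:=\mathcal{L}_s$, $L':=W^*L$, both ample. By the theorem of Deligne--Illusie, $F_*\Omega^\bullet_{X_0/k}$ is decomposable in $D(\mathcal{O}_{X_0'})$, and since the de Rham complex is concentrated in degrees $\leq n<p$ the decomposition holds without truncation:
$$F_*\Omega^\bullet_{X_0/k}\ \simeq\ \bigoplus_{i=0}^{n}\Omega^i_{X_0'/k}[-i].$$
Tensoring with the line bundle $L'^{-1}$ and taking hypercohomology on $X_0'$, the right-hand side gives $\bigoplus_{i+j=m}H^j(X_0',\Omega^i_{X_0'}\otimes L'^{-1})$, while by the projection formula and affineness of $F$ the left-hand side is $\mathbb{H}^m\bigl(X_0,\Omega^\bullet_{X_0/k}\otimes(F^*L'^{-1},\nabla_{\mathrm{can}})\bigr)$ with $F^*L'^{-1}\cong L^{-p}$. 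In the hypercohomology spectral sequence $E_1^{a,b}=H^b(X_0,\Omega^a_{X_0}\otimes L^{-p})\Rightarrow\mathbb{H}^{a+b}$, Serre duality identifies $E_1^{a,b}\cong H^{n-b}(X_0,\Omega^{n-a}_{X_0}\otimes L^{p})^{\vee}$, which vanishes for all $b<n$ because $p\geq N_0$; hence $\mathbb{H}^m=0$ for $m<n$. Comparing the two sides, $H^j(X_0',\Omega^i_{X_0'}\otimes L'^{-1})=0$ for $i+j<n$; transporting along $W$ (which carries $\Omega^i_{X_0}$ to $\Omega^i_{X_0'}$ and $L$ to $L'$) gives $H^j(X_0,\Omega^i_{X_0}\otimes L^{-1})=0$ for $i+j<n$, and Serre duality on $X_0$ then yields $H^a(X_0,\Omega^b_{X_0}\otimes L)=0$ for $a+b>n$, which is what the reduction step needs.

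I expect the main difficulty to be the coordination of positivity in the reduction step: the prime $p$ must at once exceed $\dim X$ (so the de Rham complex sits in degrees $<p$ and Deligne--Illusie applies with a full decomposition) and exceed the Serre-vanishing bound $N_0$ for $\mathcal{L}$ over $\Spec R$ (so that $E_1^{a,b}$ dies for $b<n$), while still producing a fibre that genuinely lifts to $W_2(k)$; each point is standard but must be set up with care. If one prefers to stay over $\mathbb{C}$, one could instead give the analytic proof via the Bochner--Kodaira--Nakano identity for $(b,a)$-forms valued in a positively curved Hermitian metric on $\mathcal{A}$, or the cyclic-cover argument of \cite{EV92}: pick a smooth $D\in|\mathcal{A}^{N}|$ with $N\gg0$, form the $N$-fold cyclic cover $\pi\colon Y\to X$ branched along $D$, and extract the vanishing from the eigensheaf decomposition of $\pi_*\Omega^\bullet_Y$ together with Hodge symmetry on $Y$.
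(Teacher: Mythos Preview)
Your argument is a correct rendition of the Deligne--Illusie proof of Akizuki--Nakano vanishing, but there is nothing to compare it to: in the paper this theorem is not proved. It is only \emph{recalled} with the phrase ``To put the following proposition into context, we recall the Kodaira vanishing theorem. A good reference is \cite{EV92},'' and then stated without argument. The paper's own contribution in that section is Proposition~\ref{kodaira}, a weaker Kodaira-type statement for a smooth hypersurface $Y_1\subset X_1$ of sufficiently high degree over an arbitrary field, proved directly from Serre vanishing and the conormal sequence; the classical theorem over $\mathbb{C}$ is invoked purely as motivation.

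On the substance of your write-up: the spreading-out and the Frobenius/decomposition steps are fine. The one place I would tighten is the lift to $W_2(k)$. Saying ``$X_0$ lifts over $R/\mathfrak{m}_s^2$, hence (via Cohen structure theory) over $W_2(k)$'' is not quite an argument; what you want is to shrink $R$ so that $\Spec R\to\Spec\mathbb{Z}$ is smooth, whence for a closed point $s$ with perfect residue field $k$ the completed local ring $\widehat{R}_{\mathfrak{m}_s}$ is a power series ring over $W(k)$, and the base change of $\mathcal{X}$ along $W_2(k)\hookrightarrow \widehat{R}_{\mathfrak{m}_s}/\mathfrak{m}_s^2$ gives the required lift. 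With that clarification the proof stands.
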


In this section let $X_1$, unless otherwise stated, be a smooth projective scheme over a field $k$. Let $H\subset X_1$ be a hyperplane section and $\mathcal{L}(d)=|dH|, d>0,$ be the linear system of hypersurface sections of degree $d$. We say that a hypersurface section $Y_1\subset X_1$ is of high or sufficiently high degree if $Y_1\in \mathcal{L}(d)$ with $d$ sufficiently large such that certain higher cohomology groups vanish by Serre vanishing. Note furthermore that if $A$ is a discrete valuation ring with residue field $k$, then a hypersurface section of high degree of $\mathbb{P}^N_k$ may be lifted to $\mathbb{P}^N_A$ since there is an exact sequence
$$\O_{\mathbb{P}^N_A}(d)(\mathbb{P}^N_A)\r \O_{\mathbb{P}^N_k}(d)(\mathbb{P}^N_k)\r H^1(\mathbb{P}^N_A,\O_{\mathbb{P}^N_A}(-\mathbb{P}^N_k)(d))$$
and $H^1(\mathbb{P}^N_A,\O_{\mathbb{P}^N_A}(-\mathbb{P}^N_k)(d))=0$ for $d$ large by Serre vanishing.

\begin{proposition}\label{kodaira} Let $Y_1$ be a smooth hypersurface of $X_1$ and $d=\mathrm{dim} X_1$. If $Y_1$ is of sufficiently high degree, then
$$H^{a}(X_1,\Omega^{b}_{Y_1}\otimes_{\mathcal{O}_{X_1}}\mathcal{O}_{X_1}(Y_1))=H^{a}(Y_1,\Omega^{b}_{Y_1}\otimes_{\mathcal{O}_{Y_1}}\mathcal{O}_{X_1}(Y_1)|_{\mathcal{O}_{Y_1}})=0$$
for $a+b>d-1$.
\end{proposition}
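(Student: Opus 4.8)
The plan is to deduce this ``Kodaira-type'' vanishing statement over an arbitrary field from the classical Kodaira vanishing theorem over $\mathbb{C}$ (stated just above) together with standard reductions, rather than by re-proving vanishing from scratch. First I would clarify the two equalities: the first equality of the two displayed cohomology groups is just the projection formula combined with the fact that $\Omega^b_{Y_1}$ is an $\mathcal{O}_{Y_1}$-module, so $H^a(X_1,i_*\Omega^b_{Y_1}\otimes\mathcal{O}_{X_1}(Y_1))=H^a(X_1,i_*(\Omega^b_{Y_1}\otimes i^*\mathcal{O}_{X_1}(Y_1)))=H^a(Y_1,\Omega^b_{Y_1}\otimes\mathcal{O}_{X_1}(Y_1)|_{Y_1})$, using that $i\colon Y_1\hookrightarrow X_1$ is a closed immersion (so $i_*$ is exact and computes the same cohomology). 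So the content is the vanishing of the right-hand group, and the phrase ``$Y_1$ of sufficiently high degree'' is doing the real work: $\mathcal{O}_{X_1}(Y_1)|_{Y_1}=\mathcal{O}_{X_1}(dH)|_{Y_1}$ is a high power of an ample line bundle restricted to the smooth projective variety $Y_1$ of dimension $d-1$.

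Next I would handle the characteristic-$0$ case. If $\mathrm{char}(k)=0$, then by the Lefschetz principle (or by flat base change for coherent cohomology, which commutes with field extensions) one reduces to $k=\mathbb{C}$, and then the statement is exactly Kodaira vanishing on $Y_1$: $H^a(Y_1,\Omega^b_{Y_1}\otimes\mathcal{A})=0$ for $a+b>\dim Y_1=d-1$, where $\mathcal{A}=\mathcal{O}_{X_1}(Y_1)|_{Y_1}$ is ample (being a positive multiple of an ample class). In fact in characteristic $0$ one does not even need ``sufficiently high degree''—ampleness suffices—but stating it with high degree does no harm and is what is needed uniformly.

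For positive characteristic, where Kodaira vanishing can fail, I would instead invoke Serre vanishing directly, which is exactly what the phrase ``sufficiently high degree'' signals. The point is that $\Omega^b_{Y_1}$ is a fixed coherent sheaf on the fixed variety $Y_1=Y_1^{(d_0)}$... but here $Y_1$ itself varies with $d$, so one must be slightly careful. The clean way: work on $X_1$. We want $H^a(X_1, i_*\Omega^b_{Y_1}\otimes\mathcal{O}_{X_1}(dH))=0$ for $a>0$ (note $a+b>d-1$ with $b\le d-1=\dim Y_1$ forces $a\ge 1$), and here one uses the conormal/second-fundamental-form filtration to relate $i_*\Omega^b_{Y_1}$ to the restriction to $Y_1$ of a fixed coherent sheaf on $X_1$: from $0\to\mathcal{O}_{Y_1}(-Y_1)\to\Omega^1_{X_1}|_{Y_1}\to\Omega^1_{Y_1}\to 0$ one gets that $\Omega^b_{Y_1}$ is a quotient of $\Omega^b_{X_1}|_{Y_1}=\Omega^b_{X_1}\otimes\mathcal{O}_{Y_1}$, with kernel admitting a filtration by twists of $\Omega^{b-1}_{Y_1}$. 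So inductively on $b$ it suffices to show $H^a(X_1,\Omega^c_{X_1}\otimes\mathcal{O}_{Y_1}\otimes\mathcal{O}_{X_1}(dH))=0$ for $a\ge 1$ and all relevant $c\le b$; and $\Omega^c_{X_1}\otimes\mathcal{O}_{Y_1}=\Omega^c_{X_1}/(\Omega^c_{X_1}\otimes\mathcal{O}_{X_1}(-dH))$ sits in an exact sequence with the two fixed coherent sheaves $\Omega^c_{X_1}$ and $\Omega^c_{X_1}$, twisted by $\mathcal{O}_{X_1}(dH)$ and $\mathcal{O}_{X_1}(2dH)$ respectively... which does not immediately terminate because $d$ appears in the twist. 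The actual fix is to note that for the purpose of this paper $\mathcal{O}_{X_1}(dH)$ is a power of a single ample bundle $\mathcal{O}_{X_1}(H)$, so one phrases everything as: for the fixed coherent sheaves $\Omega^c_{X_1}$ ($0\le c\le d-1$) on $X_1$ there is $d_0$ with $H^a(X_1,\Omega^c_{X_1}\otimes\mathcal{O}_{X_1}(m H))=0$ for all $a\ge 1$, all $c$, and all $m\ge d_0$ (Serre vanishing), and then a descending induction on $c$ using the exact sequences above—where at each stage the twist only increases—gives $H^a(X_1,i_*\Omega^b_{Y_1}\otimes\mathcal{O}_{X_1}(dH))=0$ for $a\ge1$ once $d\ge d_0$ (finitely many conditions).

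The main obstacle I expect is precisely this bookkeeping in positive characteristic: making the induction on $b$ interact correctly with the ``sufficiently high degree'' hypothesis, since the line bundle whose high power we twist by and the subvariety $Y_1$ both depend on $d$. In characteristic $0$ there is essentially nothing to do beyond citing Kodaira vanishing and the projection formula; the care is all in the char-$p$ argument, and the resolution is to fix the ambient sheaves $\Omega^\bullet_{X_1}$ on $X_1$ once and for all and push all $d$-dependence into Serre vanishing on $X_1$, after which only finitely many vanishing conditions need to hold simultaneously. I would also remark (cf.\ Remark~\ref{remarkDI}) that in char $p$ one could alternatively invoke Deligne--Illusie, but Serre vanishing in high degree is all that is needed here and keeps the statement elementary.
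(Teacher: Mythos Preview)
Your projection-formula reduction and your characteristic-$0$ argument via Kodaira vanishing are both fine. The gap is in the positive-characteristic induction.

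When you twist the conormal sequence by $\mathcal{O}_{Y_1}(Y_1)$ you get
\[
0 \to \Omega^{b-1}_{Y_1} \to \Omega^b_{X_1}(Y_1)|_{Y_1} \to \Omega^b_{Y_1}(Y_1) \to 0,
\]
so controlling $H^a(\Omega^b_{Y_1}(Y_1))$ requires controlling $H^{a+1}(\Omega^{b-1}_{Y_1})$, an \emph{untwisted} Hodge group of $Y_1$, which has no reason to vanish. Iterating, the successive kernels carry twists $0,\,-Y_1,\,-2Y_1,\ldots$: unwinding your filtration expresses $\Omega^b_{Y_1}(Y_1)$ in terms of $\Omega^{b-j}_{X_1}((1-j)Y_1)|_{Y_1}$ for $j=0,\ldots,b$, so your assertion that ``at each stage the twist only increases'' is false and Serre vanishing does not apply to most of these terms. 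Relatedly, the stronger target you set yourself --- $H^a(\Omega^b_{Y_1}(Y_1))=0$ for all $a\ge 1$ rather than just $a+b>d-1$ --- is simply not true: from $0\to\mathcal{O}_{X_1}\to\mathcal{O}_{X_1}(Y_1)\to\mathcal{O}_{Y_1}(Y_1)\to 0$ and Serre vanishing one finds $H^1(Y_1,\mathcal{O}_{Y_1}(Y_1))\cong H^2(X_1,\mathcal{O}_{X_1})$ for $Y_1$ of high degree, and the right-hand side can be nonzero.

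The paper's argument, which is uniform in all characteristics and never invokes Kodaira, runs a \emph{descending} induction on $b$ starting at $b=d-1$, and twists the conormal sequence by $2Y_1$ rather than $Y_1$:
\[
0 \to \Omega^{b-1}_{Y_1}(Y_1) \to \Omega^b_{X_1}(2Y_1)|_{Y_1} \to \Omega^b_{Y_1}(2Y_1) \to 0.
\]
Now the kernel $\Omega^{b-1}_{Y_1}(Y_1)$ carries exactly the positive twist needed for the next step, the middle term is handled by Serre vanishing on $X_1$, and the right-hand term is covered by the induction hypothesis (which one proves for all twists $mY_1$, $m\ge 1$, simultaneously). The base case $b=d-1$ comes from adjunction $\omega_{Y_1}\cong\omega_{X_1}(Y_1)|_{Y_1}$, which reduces $H^{a>0}(\omega_{Y_1}(mY_1))$ directly to Serre vanishing for $\omega_{X_1}$ on $X_1$. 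The single extra twist is what makes the bookkeeping close.
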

\begin{proof}
Note that the first equality in the statement follows from the projection formula $i_*\Omega^{b}_{Y_1}\otimes_{\mathcal{O}_{X_1}}\mathcal{O}_{X_1}(Y_1)=i_*(\Omega^{b}_{Y_1}\otimes_{\mathcal{O}_{Y_1}}i^*\mathcal{O}_{X_1}(Y_1))$ for $i$ the inclusion $Y_1\hookrightarrow X_1$. 

We first show that for $\omega_{X_1}=\Omega^d_{X_1}$ and $\omega_{Y_1}=\Omega^{d-1}_{Y_1}$, we have that 
$$H^{a>0}(X_1,\omega_{Y_1}\otimes_{\mathcal{O}_{X_1}}\mathcal{O}_{X_1}(Y_1))=0$$ 
if $Y_1$ is of high degree. By \cite[Ch. II, Prop. 8.20]{Ha77} we know that 
$$\omega_{Y_1}\cong \omega_{X_1}\otimes \O_{Y_1} \otimes \O_{X_1}(Y_1).$$
This implies that $\omega_{X_1}|_{Y_1}=\omega_{Y_1}(-Y_1)$ and therefore that the sequences 
$$0\rightarrow \omega_{X_1}(Y_1)\rightarrow \omega_{X_1}(2Y_1)\rightarrow \omega_{Y_1}(Y_1) \rightarrow 0$$
and
$$H^a(X_1,\omega_{X_1}(2Y_1))\r H^a(X_1,\omega_{Y_1}(Y_1))\r H^{a+1}(X_1,\omega_{X_1}(Y_1))$$
are exact.
Since by Serre vanishing $H^a(X_1,\omega_{X_1}(2Y_1))=H^a(X_1,\omega_{X_1}(Y_1))=0$ for $a>0$ and $Y_1$ of sufficiently high degree, this implies that if $Y_1$ is of sufficiently high degree we also have that $H^{a>0}(X_1,\omega_{Y_1}(Y_1))=H^{a>0}(Y_1,\omega_{Y_1}\otimes_{\mathcal{O}_{X_1}}\mathcal{O}_{X_1}(Y_1)|_{\O_{Y_1}})=0$.

We now consider the exact sequence
$$0\rightarrow \Omega^{p-1}_{Y_1}(-Y_1) \rightarrow \Omega^{p}_{X_1}|_{Y_1} \rightarrow \Omega^{p}_{Y_1} \rightarrow 0$$
coming from the conormal exact sequence $0\rightarrow \O_{Y_1}(-Y_1) \rightarrow \Omega^{1}_{X_1}|_{Y_1} \rightarrow \Omega^{1}_{Y_1} \rightarrow 0$. Tensoring with $\O_{Y_1}(2Y_1)$ gives an exact sequence 
$$0\rightarrow \Omega^{p-1}_{Y_1}(Y_1) \rightarrow \Omega^{p}_{X_1}(2Y_1)|_{Y_1} \rightarrow \Omega^{p}_{Y_1}(2Y_1) \rightarrow 0.$$
This implies that the sequence
$$H^{a}(X_1,\Omega_{Y_1}^b(2Y_1))\r H^{a+1}(X_1,\Omega^{b-1}_{Y_1}(Y_1))\r H^{a+1}(X_1,\Omega^b_{X_1}(2Y_1))$$
is exact. The proposition follows inductively.
\end{proof}

\begin{remark}\label{remarkDI}
The referee made the author aware of the fact that Proposition \ref{kodaira} is closely related to the following Kodaira type vanishing theorem of Deligne-Illusie (see \cite[p. 248]{DI}): let $X$ be a smooth projective and equidimensional scheme over a field $k$ of characteristic $p$ and assume that $X$ is liftable to $W_2(k)$. Let $L$ be an ample invertible sheaf on $X$. Then
$$H^j(X,\Omega_X^j\otimes L^{-1})=0$$
for $i+j<\mathrm{inf}(p,\mathrm{dim}X=d)$ and in particular
$$H^j(X,\Omega_X^j\otimes L)=0$$
for $i+j>d$ and $p\geq d$. Since $Y_1$ is liftable to $W(k)$ if it is of high degree, the cited theorem implies that $H^{a}(Y_1,\Omega^{b}_{Y_1}\otimes_{\mathcal{O}_{Y_1}}\mathcal{O}_{X_1}(Y_1)|_{\mathcal{O}_{Y_1}})=0$ if $p\geq d-1$ and $a+b>d-1$. For this note that $\mathcal{O}_{X_1}(Y_1)|_{\mathcal{O}_{Y_1}}$ is ample.
\end{remark}

We can now deduce the following Lefschetz theorem:

\begin{proposition}\label{lefschetz} Let $Y_1$ be a smooth hypersurface section of $X_1$ and $d=\mathrm{dim} X_1$. Then there is a map 
$$gys:H^{d-1}(Y_1,\Omega^{d-2}_{Y_1})\rightarrow H^d(X_1,\Omega^{d-1}_{X_1})$$ 
which is an isomorphism for $d\geq 4$ and surjective for $d=3$ if $Y_1$ is of high degree.
\end{proposition}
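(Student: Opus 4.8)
The map $gys$ has in fact already been constructed in Lemma \ref{gysinmap}: writing $i\colon Y_1\hookrightarrow X_1$ for the inclusion and fixing a local equation $f$ of $Y_1$, it is the composite
\[
H^{d-1}(Y_1,\Omega^{d-2}_{Y_1})\;\xrightarrow{\;\omega\mapsto\omega\wedge d\log f\;}\;H^{d-1}(Y_1,R^1i^!\Omega^{d-1}_{X_1})\;\xrightarrow{\;\sim\;}\;H^{d}_{Y_1}(X_1,\Omega^{d-1}_{X_1})\;\longrightarrow\;H^{d}(X_1,\Omega^{d-1}_{X_1}),
\]
the middle isomorphism coming from the local-cohomology spectral sequence, which degenerates since $\Omega^{d-1}_{X_1}$ is locally free and hence $\mathcal{H}^{q}_{Y_1}(\Omega^{d-1}_{X_1})=0$ for $q\neq1$. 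The plan is to control the first and the last arrow separately.

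For the last arrow, the localisation sequence
\[
H^{d-1}(X_1\setminus Y_1,\Omega^{d-1}_{X_1})\to H^{d}_{Y_1}(X_1,\Omega^{d-1}_{X_1})\to H^{d}(X_1,\Omega^{d-1}_{X_1})\to H^{d}(X_1\setminus Y_1,\Omega^{d-1}_{X_1})
\]
together with the fact that $X_1\setminus Y_1$, as the complement of the ample divisor $Y_1$, is affine of dimension $d$ shows that both outer terms vanish when $d\geq2$, so this arrow is an isomorphism.

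The content is in the first arrow. As $d\log f$ has a pole of order exactly one along $Y_1$, the map $\omega\mapsto\omega\wedge d\log f$ factors through the first-order piece $\Omega^{d-1}_{X_1}(Y_1)/\Omega^{d-1}_{X_1}\cong\Omega^{d-1}_{X_1}|_{Y_1}\otimes\mathcal{O}_{X_1}(Y_1)$ of $R^1i^!\Omega^{d-1}_{X_1}=\varinjlim_{n}\Omega^{d-1}_{X_1}(nY_1)/\Omega^{d-1}_{X_1}$, and wedging down the conormal sequence and twisting by $\mathcal{O}_{X_1}(Y_1)$ produces, on $Y_1$, the short exact sequence
\[
0\longrightarrow\Omega^{d-2}_{Y_1}\longrightarrow\Omega^{d-1}_{X_1}|_{Y_1}\otimes\mathcal{O}_{X_1}(Y_1)\longrightarrow\omega_{Y_1}\otimes\mathcal{O}_{X_1}(Y_1)\longrightarrow0
\]
(using $\Omega^{d-1}_{Y_1}=\omega_{Y_1}$, since $\dim Y_1=d-1$). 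From the associated long exact sequence, the cokernel of $H^{d-1}(Y_1,\Omega^{d-2}_{Y_1})\to H^{d-1}\!\bigl(Y_1,\Omega^{d-1}_{X_1}|_{Y_1}(Y_1)\bigr)$ is $H^{d-1}(Y_1,\omega_{Y_1}(Y_1))$ and its kernel is a quotient of $H^{d-2}(Y_1,\omega_{Y_1}(Y_1))$. By Proposition \ref{kodaira} the first group vanishes (there $a=b=d-1$, so $a+b=2d-2>d-1$), and the second vanishes exactly when $(d-2)+(d-1)>d-1$, i.e.\ for $d\geq3$. It remains to compare this ``$n=1$'' term with the full colimit: the successive quotients $\Omega^{d-1}_{X_1}(nY_1)/\Omega^{d-1}_{X_1}((n-1)Y_1)\cong\Omega^{d-1}_{X_1}|_{Y_1}\otimes\bigl(\mathcal{O}_{X_1}(Y_1)|_{Y_1}\bigr)^{\otimes n}$, $n\geq2$, contribute — again through the conormal sequence — the groups $H^{*}(Y_1,\Omega^{d-2}_{Y_1}((n-1)Y_1))$ together with top and bottom cohomology of powers of the ample bundle $\mathcal{O}_{X_1}(Y_1)|_{Y_1}$. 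These vanish in the needed degrees by Serre vanishing (for $Y_1$ of high degree) and Proposition \ref{kodaira}, the one binding constraint being $H^{d-2}(Y_1,\Omega^{d-2}_{Y_1}(Y_1))=0$, which Proposition \ref{kodaira} supplies only when $2(d-2)>d-1$, i.e.\ for $d\geq4$. Assembling these estimates gives surjectivity of $gys$ for $d\geq3$ and bijectivity for $d\geq4$.

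The step I expect to be the main obstacle is precisely this last comparison — keeping track of the colimit defining $R^1i^!\Omega^{d-1}_{X_1}$ and checking that the transition maps on $H^{d-1}(Y_1,-)$ become isomorphisms once $\deg Y_1$ is large enough. It is also where the dichotomy in the statement originates: for $d=3$ the obstructing group $H^{1}(Y_1,\Omega^{1}_{Y_1}(Y_1))$ sits exactly on the boundary $a+b=d-1$ of the range of Proposition \ref{kodaira}, so the argument only delivers surjectivity, whereas for $d\geq4$ it lies strictly inside that range and one obtains an isomorphism.
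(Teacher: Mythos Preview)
Your proof is correct and follows essentially the same strategy as the paper's: construct $gys$ via local cohomology, use affineness of $X_1\setminus Y_1$ to identify $H^{d}_{Y_1}(X_1,\Omega^{d-1}_{X_1})$ with $H^{d}(X_1,\Omega^{d-1}_{X_1})$, and then filter $R^1i^!\Omega^{d-1}_{X_1}$ and control the graded pieces through Proposition~\ref{kodaira}.

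The only real difference is in the bookkeeping of the filtration. The paper filters $R^1i^!\Omega^{d-1}_{X_1}$ so that the first step is \emph{exactly} the image $g(\Omega^{d-2}_{Y_1})$, with successive quotients identified with $\Omega^{d-2}_{Y_1}$ twisted by powers of $\mathcal{O}_{X_1}(Y_1)|_{Y_1}$; the vanishing of $H^{d-1}$ and $H^{d-2}$ of these quotients then comes directly from Proposition~\ref{kodaira}. You instead use the coarser pole-order filtration $\Omega^{d-1}_{X_1}(nY_1)/\Omega^{d-1}_{X_1}$ and then break each graded piece $\Omega^{d-1}_{X_1}|_{Y_1}(nY_1)$ in two via the conormal sequence. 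Your route is slightly longer (two layers of short exact sequences rather than one), but it has the virtue that exhaustion of $R^1i^!\Omega^{d-1}_{X_1}$ by the filtration is transparent. The binding numerical constraint---$H^{d-2}(Y_1,\Omega^{d-2}_{Y_1}(Y_1))=0$ iff $d\geq 4$---is the same either way.

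One minor gap to patch: Proposition~\ref{kodaira} as stated addresses only the twist by $\mathcal{O}_{X_1}(Y_1)$, whereas for the $n\geq 2$ steps you need the vanishing of $H^{a}(Y_1,\Omega^{b}_{Y_1}(mY_1))$ for all $m\geq 1$. This extension is immediate from the same proof (the Serre vanishing on $X_1$ used there is uniform in $m\geq 1$ once $\deg Y_1$ is large), and indeed the paper tacitly relies on it as well.
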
 
\begin{proof} Let $i$ denote the inclusion $Y_1\hookrightarrow X_1$. As in Lemma \ref{gysinmap} we define $gys$ to be the composition 
$$H^{d-1}(Y_1,\Omega^{d-2}_{Y_1})\xrightarrow{} H^{d-1}(Y_1,R^1i^{!}\Omega^{d-1}_{X_1})\cong H^{d}_{Y_1}(X_1,\Omega^{d-1}_{X_1})\r H^d(X_1,\Omega^{d-1}_{X_1})$$
where the first map is induced by the Gysin map
$$g:\Omega^{d-2}_{Y_1}\rightarrow R^1i^!\Omega^{d-1}_{X_1}, \omega\mapsto \omega \wedge\frac{df_d}{f_d}$$
(see \cite[Ch. II, (3.2.13)]{Gr85}) with $f_d$ is the regular parameter defining $Y_1$.
Since $H^d(X_1-Y_1,\Omega^{d-1}_{X_1-Y_1})=0$ for $d\geq 1$, we have that 
$$H^{d}_{Y_1}(X_1,\Omega^{d-1}_{X_1})\cong H^d(X_1,\Omega^{d-1}_{X_1})$$
for $d\geq 2$. We are therefore reduced to showing that $g$ induces an isomorphism on $H^{d-1}$ for $d-1\geq 3$ and a surjection for $d-1=2$. We define a filtration 
$$g(\Omega^{d-2}_{Y_1})=\mathcal{F}_1\subset \mathcal{F}_2\subset...\subset \cup_{i\geq 0}\mathcal{F}= R^1i^{!}\Omega^{d-1}_{X_1},$$ 
letting $\mathcal{F}_i$ be the subsheaf of $R^1i^{!}\Omega^{d-1}_{X_1}$ locally defined by
$$<\omega\wedge \frac{df_d}{f_d^{n_d}}|n_d\geq i>.$$
Here $\omega\in\Omega^{d-2}_{Y_1}$.  

Let $gr_i R^1i^{!}\Omega^{d-1}_{X_1}:= \mathcal{F}_{i+1}/\mathcal{F}_i$. Then $gr_i R^1i^{!}\Omega^{d-1}_{X_1}\cong \Omega^{d-2}_{Y_1}\otimes_{\mathcal{O}_{Y_1}}\mathcal{O}_{X_1}(Y_1)|_{\mathcal{O}_{Y_1}}$ and the short exact sequence 
$$0\rightarrow \mathcal{F}_i\rightarrow \mathcal{F}_{i+1}\rightarrow gr_i R^1i^{!}\Omega^{d-1}_{X_1}\rightarrow 0$$
induces the following exact sequence on cohomology groups:
\begin{equation}
\begin{split} H^{d-2}(Y_1,\Omega^{d-2}_{Y_1}\otimes_{\mathcal{O}_{Y_1}}\mathcal{O}_{X_1}(Y_1)|_{\mathcal{O}_{Y_1}})\rightarrow H^{d-1}(Y_1,\mathcal{F}_i)\rightarrow H^{d-1}(Y_1,\mathcal{F}_{i+1}) \\ 
\rightarrow H^{d-1}(Y_1,\Omega^{d-2}_{Y_1}\otimes_{\mathcal{O}_{Y_1}}\mathcal{O}_{X_1}(Y_1)|_{\mathcal{O}_{Y_1}})
\end{split}
\end{equation}
By Proposition \ref{kodaira} we have that if $Y_1$ is of high degree, then $H^{d-1}(Y_1,\Omega^{d-2}_{Y_1}\otimes_{\mathcal{O}_{X_1}}\mathcal{O}_{X_1}(Y_1)|_{\mathcal{O}_{Y_1}})$ vanishes for $d>2$ and $H^{d-2}(Y_1,\Omega^{d-2}_{Y_1}\otimes_{\mathcal{O}_{X_1}}\mathcal{O}_{X_1}(Y_1)|_{\mathcal{O}_{Y_1}})$ for $d>3$.

This implies that the maps $H^{a}(Y_1,\mathcal{F}_i)\rightarrow H^{a}(Y_1,\mathcal{F}_{i+1})$ are isomorphisms for $d\geq 4$ and surjective for $d=3$ if $Y_1$ is of sufficiently high degree. Since $H^a(Y_1,\varinjlim\mathcal{F}_i)\cong \varinjlim H^a(Y_1,\mathcal{F}_i)$ (see \cite[Ch. III, Prop. 2.9]{Ha77}), the same holds for the maps $H^{a}(Y_1,\mathcal{F}_1=\Omega^{d-2}_{Y_1})\rightarrow H^{a}(Y_1, R^1i^{!}\Omega^{d-1}_{X_1})$. 
In particular, for $d=\text{dim}X_1=3$ we get that $H^{d-1}(Y_1,\Omega^{d-2}_{Y_1})\rightarrow H^d(X_1,\Omega^{d-1}_{X_1})$ is surjective and for $d=\text{dim}X_1\geq 4$ that $H^{d-1}(Y_1,\Omega^{d-2}_{Y_1})\rightarrow H^d(X_1,\Omega^{d-1}_{X_1})$ is an isomorphism.
\end{proof}

\begin{corollary}\label{cord} Let $X$ be as in the introduction and $d\geq 3$. Let $Y_1$ a smooth hypersurface section of $X_1$. Let $\alpha\in H^d(X_1,\mathcal{K}^M_{d,X_n})$. If $Y_1$ is of sufficiently high degree and contains the image of $\alpha$ in $\CH_0(X_1)$ under the restriction map $H^d(X_1,\mathcal{K}^M_{d,X_n})\rightarrow H^{d}(X_1,\mathcal{K}^M_{d,X_1})\cong \CH_0(X_1)$, then $Y_1$ lifts to a smooth projective subscheme $Y$ of $X$ over $A$ and $\alpha$ is in the image of $H^{d-1}(Y_1,\mathcal{K}^M_{d-1,Y_n})\rightarrow H^d(X_1,\mathcal{K}^M_{d,X_n})$.
\end{corollary}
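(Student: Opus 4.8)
\emph{Proof sketch.} The plan is to induct on $n$, peeling off one infinitesimal layer at a time by means of the exact sheaf sequence $\Omega^{d-1}_{X_1}\xrightarrow{\iota}\mathcal{K}^M_{d,X_n}\r\mathcal{K}^M_{d,X_{n-1}}\r 0$ of the introduction and its analogue $\Omega^{d-2}_{Y_1}\r\mathcal{K}^M_{d-1,Y_n}\r\mathcal{K}^M_{d-1,Y_{n-1}}\r 0$ on $Y_1$, propagating lifts through the Gysin squares of Lemma \ref{gysinmap} with the help of the Lefschetz surjectivity of Proposition \ref{lefschetz}. First I would fix the geometry. Embedding $X\hookrightarrow\mathbb{P}^N_A$ over $A$, write $Y_1=X_1\cap H_1$ with $H_1\subset\mathbb{P}^N_k$ a hypersurface of high degree; as recalled before Proposition \ref{kodaira}, $H_1$ lifts to some $H\subset\mathbb{P}^N_A$. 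The section cutting out $H$ restricts to a non-zero-divisor on $X_1$ (since $Y_1$ has codimension one in $X_1$), so $Y:=X\cap H$ is flat over $A$; its non-smooth locus over $A$ is closed in the proper $A$-scheme $Y$ and, by flatness, does not meet $Y_1$ (smooth over $k$), so were it nonempty its closed image in $\Spec A$ would contain the closed point, forcing it to meet $Y_1$ --- hence it is empty. Thus $Y$ is smooth projective over $A$ of relative dimension $d-1$, $Y_n:=Y\times_AA/\pi^n$ is a smooth codimension-one closed subscheme of $X_n$, and the analogue of the above sheaf sequence holds for $Y$ (by \cite[Prop.~2.3]{BEK14'} in case (1), by Proposition \ref{exactsequwitt} in case (2)). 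I also assume the Gersten conjecture for $\mathcal{K}^M_{*,Y}$, so that Lemma \ref{gysinmap} applies to the pair $Y\subset X$.

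For $n=1$ one has $\alpha\in H^d(X_1,\mathcal{K}^M_{d,X_1})\cong\CH_0(X_1)$, equal to its own image under the restriction map; by hypothesis it is represented by a zero-cycle on $Y_1$, which gives a class in $\CH_0(Y_1)\cong H^{d-1}(Y_1,\mathcal{K}^M_{d-1,Y_1})$ whose proper push-forward, which is the Gysin map $gys$ of Lemma \ref{gysinmap}, equals $\alpha$. For the inductive step, let $\bar\alpha\in H^d(X_1,\mathcal{K}^M_{d,X_{n-1}})$ be the reduction of $\alpha$; its image in $\CH_0(X_1)$ coincides with that of $\alpha$ by compatibility of the restriction maps, so by the inductive hypothesis $\bar\alpha=gys(\beta)$ for some $\beta\in H^{d-1}(Y_1,\mathcal{K}^M_{d-1,Y_{n-1}})$. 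Since $\dim Y_1=d-1$ forces $H^d(Y_1,\Omega^{d-2}_{Y_1})=0$, the sheaf sequence on $Y_1$ lets us lift $\beta$ to $\beta'\in H^{d-1}(Y_1,\mathcal{K}^M_{d-1,Y_n})$; as the Gysin maps commute with reduction modulo $\pi$ (the functoriality argument of Lemma \ref{gysinmap} with $X,X_n$ there replaced by $X_n,X_{n-1}$), $gys(\beta')$ reduces to $\bar\alpha$, so $\alpha-gys(\beta')$ dies in $H^d(X_1,\mathcal{K}^M_{d,X_{n-1}})$ and hence equals $\iota(\gamma)$ for some $\gamma\in H^d(X_1,\Omega^{d-1}_{X_1})$ by the exact cohomology sequence of the introduction. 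Since $d\geq 3$, Proposition \ref{lefschetz} gives $\delta\in H^{d-1}(Y_1,\Omega^{d-2}_{Y_1})$ with $gys(\delta)=\gamma$; let $\varepsilon\in H^{d-1}(Y_1,\mathcal{K}^M_{d-1,Y_n})$ be the image of $\delta$ under the sheaf sequence on $Y_1$. Then diagram (\ref{diag2}) of Lemma \ref{gysinmap} yields $gys(\varepsilon)=\iota(gys(\delta))=\alpha-gys(\beta')$, so $\alpha=gys(\beta'+\varepsilon)$ lies in the image of $H^{d-1}(Y_1,\mathcal{K}^M_{d-1,Y_n})\r H^d(X_1,\mathcal{K}^M_{d,X_n})$, which closes the induction.

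The geometric content of the statement is entirely concentrated in Proposition \ref{lefschetz} and in the degree bounds that permit $Y_1$ to be lifted to $Y$; granting those, the corollary is a formal d\'evissage --- which is also why the same route fails for $d=2$, where the Gysin map on differential forms need not be surjective and one must instead construct lifts by hand. The step I expect to demand the most care is the bookkeeping of Gysin maps: checking that the square relating $gys$ at thickening levels $n$ and $n-1$ commutes, and that the outer square (\ref{diag2}) of Lemma \ref{gysinmap} may be invoked at level $n$ with $\iota$ as its top arrow. Both are formal consequences of the functoriality of local cohomology, exactly as in the proof of Lemma \ref{gysinmap}, but they are precisely what makes the induction close up.
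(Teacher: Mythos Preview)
Your argument is correct and follows essentially the same route as the paper: both proceed by induction on $n$, use the right-exact sequences $\Omega^{d-1}_{X_1}\to\mathcal{K}^M_{d,X_n}\to\mathcal{K}^M_{d,X_{n-1}}\to 0$ on $X_1$ and $Y_1$, invoke the Gysin compatibility of Lemma~\ref{gysinmap}, and close the chase via the surjectivity of $gys:H^{d-1}(Y_1,\Omega^{d-2}_{Y_1})\to H^d(X_1,\Omega^{d-1}_{X_1})$ from Proposition~\ref{lefschetz}. The paper compresses all of this into ``a simple diagram chase'' and only writes out $n=2$; you spell out the base case, the lifting of $\beta$ to $\beta'$ via $H^d(Y_1,\Omega^{d-2}_{Y_1})=0$, and the smoothness of $Y$ over $A$, but the underlying strategy is identical. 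One small remark: you need not assume Gersten for $\mathcal{K}^M_{*,Y}$ here, since only diagram~(\ref{diag2}) of Lemma~\ref{gysinmap} (and its analogue between levels $n$ and $n-1$) is used, and those rely only on the CM property of $\mathcal{K}^M_{d,X_n}$ and $\Omega^{d-1}_{X_1}$, which hold unconditionally.
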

\begin{proof}
That $Y_1$ lifts to a smooth projective subscheme $Y$ of $X$ over $A$ if it is of high degree follows from Serre vanishing. We do the $n=2$ case. The general case follows inductively. By Lemma \ref{gysinmap} there is a commutative diagram
$$\begin{xy}
  \xymatrix{
       H^d(X_1,\Omega^{d-1}_{X_1})  \ar[r]  & H^{d}(X_1,\mathcal{K}^M_{d,X_2}) \ar[r]   & H^{d}(X_1,\mathcal{K}^M_{d,X_1}) \ar[r] & 0  \\
      H^{d-1}(Y_1,\Omega^{d-2}_{Y_1})  \ar[r]  \ar[u]  & H^{d-1}(Y_1,\mathcal{K}^M_{d-1,Y_2})   \ar[r]^{} \ar[u]  &   H^{d-1}(Y_1,\mathcal{K}^M_{d-1,Y_1}) \ar[u] \ar[r] & 0
  }
\end{xy} $$
in which the rows are induced by the (right-)exact sequence of sheaves 
$$\Omega^{d-1}\rightarrow \mathcal{K}^M_{2}\rightarrow \mathcal{K}^M_{1}\rightarrow 0$$
on $X_1$ and $Y_1$. The statement now follows from Proposition \ref{lefschetz} and a simple diagram chase. 
\end{proof}

For the sake of completeness we also prove the following Lefschetz theorem. We will not use it though.

\begin{proposition}\label{lefschetzinjective} Let $Y_1$ be a smooth hypersurface section of $X_1$ and $d=\mathrm{dim} X_1$. Let $i$ denote the inclusion $Y_1\hookrightarrow X_1$. Then the map $$i^*:H^{q}(X_1,\Omega^{p}_{X_1})\rightarrow H^q(Y_1,\Omega^{p}_{Y_1})$$ is an isomorphism for $p+q<d-1$ and injective for $p+q=d-1$ if $Y_1$ is of high degree.
\end{proposition}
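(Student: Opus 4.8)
The plan is to run the same kind of argument that proved Proposition \ref{kodaira} and Proposition \ref{lefschetz}, namely to filter the relevant sheaf on $X_1$ by its restriction to $Y_1$ and twists, and then feed in Serre vanishing plus the Kodaira-type vanishing already established. First I would reduce the statement about $i^*$ to a statement about local cohomology: since $H^q(X_1-Y_1,\Omega^p_{X_1-Y_1})$ will not in general vanish, the cleaner route is to use the conormal exact sequence directly. Tensoring the conormal sequence $0\to \O_{Y_1}(-Y_1)\to \Omega^1_{X_1}|_{Y_1}\to \Omega^1_{Y_1}\to 0$ to its exterior powers gives
$$0\rightarrow \Omega^{p-1}_{Y_1}(-Y_1)\rightarrow \Omega^p_{X_1}|_{Y_1}\rightarrow \Omega^p_{Y_1}\rightarrow 0,$$
and the restriction sequence $0\to \Omega^p_{X_1}(-Y_1)\to \Omega^p_{X_1}\to \Omega^p_{X_1}|_{Y_1}\to 0$ relates $H^*(X_1,\Omega^p_{X_1})$ to $H^*(Y_1,\Omega^p_{X_1}|_{Y_1})$.

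The key steps, in order, are: (1) show $H^q(X_1,\Omega^p_{X_1}(-Y_1))=0$ for $p+q\le d$ when $Y_1$ is of sufficiently high degree — this is Serre vanishing applied to the very negative twist, using that $\Omega^p_{X_1}$ is coherent and $\O_{X_1}(Y_1)$ is ample, together with Serre duality to rephrase it as $H^{d-q}(X_1,\Omega^{d-p}_{X_1}(Y_1))=0$, which holds for $d-q>0$, i.e. $q<d$; this controls the range $p+q\le d-1$. From the restriction sequence this already gives that $H^q(X_1,\Omega^p_{X_1})\to H^q(Y_1,\Omega^p_{X_1}|_{Y_1})$ is an isomorphism for $p+q<d-1$ and injective for $p+q=d-1$. (2) Then analyze the conormal sequence on $Y_1$: from $0\to \Omega^{p-1}_{Y_1}(-Y_1)\to \Omega^p_{X_1}|_{Y_1}\to \Omega^p_{Y_1}\to 0$ one gets that $H^q(Y_1,\Omega^p_{X_1}|_{Y_1})\to H^q(Y_1,\Omega^p_{Y_1})$ is an isomorphism (resp. injective) once $H^q(Y_1,\Omega^{p-1}_{Y_1}(-Y_1))$ and $H^{q+1}(Y_1,\Omega^{p-1}_{Y_1}(-Y_1))$ vanish, and by Serre duality on $Y_1$ these are dual to $H^{d-1-q}(Y_1,\Omega^{d-p}_{Y_1}(Y_1))$-type groups, which vanish for $d-1-q>0$ by Proposition \ref{kodaira} (and for the boundary $p+q=d-1$ one only needs the injectivity, hence only the single vanishing in degree $q+1$, which is still in range). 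Composing the isomorphism of step (1) with the iso/injection of step (2) gives the claim.

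I expect the main obstacle to be bookkeeping at the boundary $p+q=d-1$: there one must be careful that exactly the cohomology groups that are forced to vanish by Serre vanishing (step 1) and by Proposition \ref{kodaira} (step 2) are the ones that appear, and that one genuinely obtains injectivity rather than isomorphism — in particular checking that the potentially nonzero group $H^{q+1}$ of the kernel sheaf still lies in the vanishing range $a+b>d-1$ of Proposition \ref{kodaira} after the twist by $\O_{X_1}(Y_1)|_{Y_1}$. A secondary point to handle cleanly is the passage from "$\Omega^p_{X_1}(-Y_1)$ has no cohomology in a range" to the precise degree bounds, which is most transparently done via Serre duality so that the negative twist becomes a positive twist and Serre vanishing applies uniformly; one should also record that $Y_1$ being smooth and of high degree makes all the duality statements on $Y_1$ legitimate.
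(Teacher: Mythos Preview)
Your plan is essentially identical to the paper's proof: factor $i^*$ through $\Omega^p_{X_1}|_{Y_1}$, control the first step with the ideal-sheaf sequence plus Serre duality and Serre vanishing, and the second with the conormal sequence plus Serre duality and Proposition~\ref{kodaira}. One small correction to your bookkeeping in step~(2): injectivity of $H^q(Y_1,\Omega^p_{X_1}|_{Y_1})\to H^q(Y_1,\Omega^p_{Y_1})$ is governed by the vanishing of $H^q(Y_1,\Omega^{p-1}_{Y_1}(-Y_1))$ (the kernel term), not $H^{q+1}$, and the condition coming from Proposition~\ref{kodaira} is $a+b>d-1$ rather than merely $d-1-q>0$; applied to the Serre dual group $H^{d-1-q}(Y_1,\Omega^{d-p}_{Y_1}(Y_1))$ this reads $p+q<d$, so the boundary case $p+q=d-1$ is indeed covered.
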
 
\begin{proof}
We factorise the map $i^*:\Omega^{p}_{X_1}\r i_*\Omega^{p}_{Y_1}$ by
$$\Omega^{p}_{X_1}\r i_*(\Omega^{p}_{X_1}|_{Y_1})$$
followed by
$$i_*(\Omega^{p}_{X_1}|_{Y_1})\r i_*\Omega^{p}_{Y_1}$$
and show that each of these maps induce isomorphisms, resp. injections, on cohomology in the stated range.

We first consider the exact sequence 
$$0\rightarrow \Omega^{p}_{X_1}(-Y_1) \rightarrow \Omega^{p}_{X_1} \rightarrow \Omega^{p}_{X_1}|_{Y_1} \rightarrow 0.$$
This induces the exact sequence 
$$H^q(X_1,\Omega^{p}_{X_1}(-Y_1)) \rightarrow H^q(X_1,\Omega^{p}_{X_1}) \rightarrow H^q(Y_1,\Omega^{p}_{X_1}|_{Y_1}) \r H^{q+1}(X_1,\Omega^{p}_{X_1}(-Y_1)). $$
By Serre duality $H^{q}(X,\Omega^{p}_{X_1}(-Y_1))\cong H^{d-q}(X,\Omega^{d-p}_{X_1}(Y_1))$. This implies that $H^q(X_1,\Omega^{p}_{X_1}) \rightarrow H^q(Y_1,\Omega^{p}_{X_1}|_{Y_1})$ is an isomorphism for $p+q<d-1$ and injective for $p+q=d-1$ if $Y_1$ is of sufficiently high degree by Serre vanishing.

We now consider the exact sequence
$$0\rightarrow \Omega^{p-1}_{Y_1}(-Y_1) \rightarrow \Omega^{p}_{X_1}|_{Y_1} \rightarrow \Omega^{p}_{Y_1} \rightarrow 0$$
on $Y_1$.
This induces the exact sequence 
$$H^q(Y_1,\Omega^{p-1}_{Y_1}(-Y_1)) \rightarrow H^q(Y_1,\Omega^{p}_{X_1}|_{Y_1}) \rightarrow H^q(Y_1,\Omega^{p}_{Y_1}) \r H^{q+1}(Y_1,\Omega^{p-1}_{Y_1}(-Y_1)) $$
which by Serre duality and Proposition \ref{kodaira} implies that $H^q(Y_1,\Omega^{p}_{X_1}|_{Y_1}) \rightarrow H^q(Y,\Omega^{p}_{Y_1})$ is an isomorphism for $p+q<d+1$ and injective for $p+q=d+2$ if $Y_1$.
\end{proof}

\begin{remark}
Proposition \ref{lefschetzinjective} is an analogue of the following theorem (see for example \cite[Thm. 1.29]{Voi07}): Let $X$ be an $n$-dimensional compact complex variety and let $Y\hookrightarrow X$ be a smooth hypersurface such that the line bundle $\O_X(Y)=(\mathcal{I}_Y)^*$ is ample. Then the restriction $$j^*:H^k(X,\Q)\r H^k(Y,\Q)$$ is an isomorphism for $k<n-1$ and injective for $k=n-1$.

This theorem may be deduced from Kodaira vanishing using the Hodge decomposition for $Y_{\C}$ and $X_{\C}$. We note that when working over an arbitrary field $k$, one seems to need the extra assumption that $Y$ is of high degree to obtain Lefschetz theorems.
\end{remark}

\section{Main theorem}\label{secsurj}
We return to the situation of the introduction. Let $A$ be a henselian discrete valuation ring with uniformising parameter $\pi$ and residue field $k$. Let $X$ be a smooth projective scheme over Spec$(A)$ of relative dimension $d$. Let $X_n:=X\times_A A/(\pi^n)$, i.e. $X_1$ is the special fiber and the $X_n$ are the respective thickenings of $X_1$. We assume furthermore that either (1) $k$ is of characteristic $0$ and $A=k[[\pi]]$ or that (2) $A$ is the Witt ring $W(k)$ of a perfect field $k$ of ch$(k)>2$.

Let us first recall how one can lift a regular closed point $x\in X_1$ to a $1$-cycle on $X$: Let $\{f_1,...,f_d\}\subset \mathcal{O}_{X_1,x}$ be a generating set of local parameters and let $\{\tilde{f_1},...,\tilde{f_d}\}$ be lifts of these generators to $\mathcal{O}_{X,x}$. The ideal $\tilde{f_1}\mathcal{O}_{X,x}+...+\tilde{f_d}\mathcal{O}_{X,x}$ defines a subscheme of Spec$\mathcal{O}_{X,x}$ and its closure in $X$ defines a subscheme $Z$ of $X$. The unique irreducible component of $Z$ containing $x$ is a prime-cycle $C\in Z_1(X)$ which is flat and finite over $A$. Such liftings are of course not unique.

The main theme of this section is the lifting of differential forms to one-cycles on $X$. Let us consider the case where $X$ is of relative dimension $1$ over $A$. Let $F_n$ be the subgroup of $\CH_1(X)$ generated by all cycles $Z$ vanishing on $X_n$, i.e. $Z|_{X_n}=0$. By Lemma \ref{cal1} we know that $H^1_x(X_1,\mathcal{O}_{X_1})\cong {O}_{X_1,x}[\frac{1}{f}]/\mathcal{O}_{X_1,x}$ for a local parameter $f\in\mathcal{O}_{X_1,x}$. In this case we can define a map 
$$\gamma_x:{O}_{X_1,x}[\frac{1}{f}]/\mathcal{O}_{X_1,x}\rightarrow \CH_1(X)/F_n$$
by 
$$\alpha=\frac{\alpha_0}{f^m}\mapsto V(\tilde{f}^m+\alpha_0\pi^{n-1})-V(\tilde{f}^m)$$
where $\tilde{f}\in \mathcal{O}_{X,x}$ is a lifting of $f$.
That $\gamma_x$ is well-defined can be seen as follows: Let $\tilde{f_1},\tilde{f_2}\in \mathcal{O}_{X,x}$ be liftings of $f$. Then $(1+\frac{\alpha_0}{\tilde{f_1^m}}\pi^{n-1})/(1+\frac{\alpha_0}{\tilde{f_2^m}}\pi^{n-1})\equiv (1+(\frac{\alpha_0}{\tilde{f_1^m}}-\frac{\alpha_0}{\tilde{f_2^m}})\pi^{n-1})=1$ mod $\pi_n$. 


In this section we show the following proposition: 

\begin{proposition}\label{surj2} Let $X$ be of relative dimension $2$ over $A$. Then, assuming the Gersten conjecture for the Milnor K-sheaf $\mathcal{K}^M_{*,X}$, the map $$res_{X_n}:\CH_1(X)\rightarrow H^2(X_1,\mathcal{K}^M_{2,X_n})$$ is surjective. In particular the map 
$res:\CH_1(X)\rightarrow "\mathrm{lim}_n" H^2(X_1,\mathcal{K}^M_{2,X_n})$
is an epimorphism in $\text{pro-}\text{Ab}$.
\end{proposition}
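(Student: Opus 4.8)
The key tool is the exact sequence of sheaves on $X_1$
$$\Omega^{1}_{X_1}\rightarrow \mathcal{K}^M_{2,X_n}\rightarrow \mathcal{K}^M_{2,X_{n-1}}\rightarrow 0$$
from the introduction, which gives the exact sequence
$$H^2(X_1,\Omega^1_{X_1})\xrightarrow{\iota} H^2(X_1,\mathcal{K}^M_{2,X_n})\rightarrow H^2(X_1,\mathcal{K}^M_{2,X_{n-1}})\rightarrow 0.$$
By induction on $n$ (the case $n=1$ being $res_{X_1}:\CH_1(X)\to H^2(X_1,\mathcal{K}^M_{2,X_1})\cong\CH_0(X_1)$, which is surjective since $\CH_1(X)\twoheadrightarrow\CH_0(X_1)$ by specialisation of cycles) it suffices to show that every class in the image of $\iota$ lifts to $\CH_1(X)$. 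So the heart of the matter is to lift an arbitrary element $\omega\in H^2(X_1,\Omega^1_{X_1})$, viewed in $H^2(X_1,\mathcal{K}^M_{2,X_n})$, to a $1$-cycle $Z$ on $X$ with $res_{X_n}(Z)=\iota(\omega)$ — compatibly with the induction hypothesis applied to the already-lifted class in $H^2(X_1,\mathcal{K}^M_{2,X_{n-1}})$.

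**Carrying it out.** First I would compute $H^2(X_1,\Omega^1_{X_1})$ via the Cousin/coniveau resolution, which is legitimate since $\Omega^1_{X_1}$ is CM: a class is represented by a finite sum of local symbols $\frac{df_i}{f_1^{n_1}f_2^{n_2}}$ supported at closed points $x$, modulo the image of the degree-$(d-1)$ differentials of the coniveau complex (this is Lemma \ref{cal2} for $d=2$). Second, for each such local symbol at a regular closed point $x$ I would define a lifting recipe analogous to the relative dimension $1$ map $\gamma_x$ of the previous section: choose lifts $\tilde f_1,\tilde f_2\in\mathcal{O}_{X,x}$ of a parameter system, and send the symbol $\frac{df_1}{f_1 f_2^{m}}$ (say) to the difference of two codimension-$2$ cycles on $\mathrm{Spec}\,\mathcal{O}_{X,x}$ cut out by ideals built from $\tilde f_1,\tilde f_2$ and a correction term in $\pi^{n-1}$, then take closures in $X$; under $res_{X_n}$ this should reproduce $\exp$ of the symbol, i.e. $\{1+(\cdot)\pi^{n-1},(\cdot)\}$. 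Third, I would check that the coboundary relations in the coniveau complex for $\Omega^1_{X_1}$ go to rational equivalences on $X$ (i.e. to elements of $F_n$), so that the assignment descends to a well-defined map $H^2(X_1,\Omega^1_{X_1})\to\CH_1(X)/F_n$ splitting $res_{X_n}$ on the image of $\iota$; the compatibility with the $(n-1)$-step is then automatic because the cycles I construct are congruent to the previously chosen ones modulo $X_{n-1}$.

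**The main obstacle.** The genuine difficulty — and the reason Key lemma \ref{approximationlemma} is invoked — is that the naive local lifting recipe is only defined for a symbol whose two parameters $f_1,f_2$ are in sufficiently "good position", whereas a general cocycle representative is a sum of symbols at many points with uncorrelated parameter systems, and the coboundary relations mix them. So before lifting one must first \emph{move} the cocycle: replace the given representative of $\omega\in H^2(X_1,\Omega^1_{X_1})$ by an equivalent one that is a sum of symbols supported on a single smooth (very ample, high-degree) curve $C_1\subset X_1$ in such a way that everything is controlled by one parameter transverse to $C_1$. This is a moving lemma for cohomology with coefficients in differential forms, and making it work — ensuring the moved representative still represents the same class and that the auxiliary curve lifts to $X$ — is where the real work lies; once the representative is in this normal form, the construction of the lift and the verification that coboundaries map to rational equivalences become essentially the relative-dimension-$1$ computation done fibrewise along $C_1$. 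The final "In particular" assertion is then formal: $"\mathrm{lim}_n"$ is an exact functor on the relevant diagram category, so surjectivity of each $res_{X_n}$ passes to an epimorphism of pro-objects.
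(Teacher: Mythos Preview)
Your overall strategy---induction on $n$ via the exact sequence, base case $n=1$, and lifting elements in the image of $\iota$ by an explicit cycle construction---is exactly the paper's. But two points in your execution diverge from the paper in ways that matter.

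\textbf{The moving step is not ``onto a curve'' but ``to simple poles''.} You describe the content of Key lemma \ref{approximationlemma} as moving the coniveau representative so that it is supported on a single smooth high-degree curve $C_1\subset X_1$, after which one reduces to the relative-dimension-$1$ computation along $C_1$. That is not what the lemma does, and in fact that version would amount to a Lefschetz surjection $H^1(C_1,\mathcal{O}_{C_1})\to H^2(X_1,\Omega^1_{X_1})$ for a surface $X_1$, which the paper explicitly lists as an open question in Section \ref{sectionopenproblems} (it is Proposition \ref{lefschetz} ``improved by one degree''). What Key lemma \ref{approximationlemma} actually proves is that any class in $H^2(X_1,\Omega^1_{X_1})$ can be represented by a sum of local symbols with \emph{simple poles}, i.e.\ elements of $\Lambda_x=\langle \alpha\,df_i/(f_1f_2)\rangle$ at possibly many scattered points $x$. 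The reason this is the right normal form is that the explicit lift
\[
\alpha\,\frac{df_1}{f_1f_2}\;\longmapsto\;\overline{V(\tilde f_1,\tilde f_2+\pi^{n-1}\alpha)}-\overline{V(\tilde f_1,\tilde f_2)}\in\CH_1(X)
\]
restricts to $\{f_1,1+\pi^{n-1}\alpha/f_2\}=\exp(\alpha\,df_1/(f_1f_2))$ precisely when the pole in $f_2$ is simple; for a higher-order pole $f_2^{m}$ the naive analogue does not produce the correct Milnor symbol. So the ``good position'' you need is pole order $1$, not support on a curve.

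\textbf{Your third step is unnecessary and in fact open.} You propose to check that coniveau coboundaries map to rational equivalences (elements of $F_n$), so as to obtain a well-defined map $H^2(X_1,\Omega^1_{X_1})\to\CH_1(X)/F_n$. The paper does \emph{not} do this, and the existence of such a well-defined inverse is precisely Conjecture \ref{conjinversemap} in the open-problems section. For surjectivity you only need: for each class there exists \emph{some} lift. Once the representative is a sum of simple-pole symbols, you write down the explicit lift above for each summand and verify that its restriction hits the given class; no compatibility with coboundaries is required.
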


We need some preparation for the proof. From now on we assume that $d=2$ and in particular that dim$X_1=2$. Consider the (right-)exact sequence
$$\Omega^{1}_{X_1}\rightarrow \mathcal{K}^M_{2,X_2}\rightarrow \mathcal{K}^M_{2,X_1}\rightarrow 0.$$
We will lift elements which lie in the kernel of $res:H^2(X_1,\mathcal{K}^M_{2,X_2})\rightarrow  H^2(X_1,\mathcal{K}^M_{2,X_1})$ in a compatible way to $\CH_1(X)$. The kernel of $res$ is in the image of $H^2(X_1,\Omega^1_{X_1})$. Now since $\Omega^{1}_{X_1}$ is CM, $H^2(X_1,\Omega^1_{X_1})$ is isomorphic to  $$\text{coker}(\oplus_{x\in X^{(1)}_1} H^{1}_x(X_1,\Omega^{1}_{X_1})\rightarrow \oplus_{x\in X^{(2)}_1} H^2_x(X_1,\Omega^{1}_{X_1})).$$

In order to proceed, we need to study this cokernel and the occurring local cohomology groups a bit further. By Lemma \ref{cal2} we have that $H^2_x(X_1,\Omega^{1}_{X_1})$ is generated by differential forms of the form
$$\frac{df_1}{f_1^{n_1}f_2^{n_2}}\mathcal{O}_{X_1,x}\oplus \frac{df_2}{f_1^{n_1'}f_2^{n_2'}}\mathcal{O}_{X_1,x} \quad \text{mod} \quad \frac{df_i}{f_j^{n_j}}\mathcal{O}_{X_1,x}$$
for $\{f_1,f_2\}$ a system of local parameters in $\mathcal{O}_{X_1,x}$ and $i,j\in\{1,2\}$. 

We define subgroups $$F_r:= <\frac{df_1}{f_1^{n_1}f_2^{n_2}}+\frac{df_2}{f_1^{n_1'}f_2^{n_2'}}|n_1+n_2-1\leq r,n_1'+n_2'-1\leq r>$$
of $H^2_x(X_1,\Omega^1_{X_1})$ with respect to a system of local parameters. Sometimes we therefore write $H^2_x(X_1,\Omega^1_{X_1})_{(f_1,f_2)}$ instead of $H^2_x(X_1,\Omega^1_{X_1})$ to indicate with respect to which system of local parameters we are working. Then 
$$0\subset F_1\subset F_2\subset...\subset H^2_x(X_1,\Omega^1_{X_1})_{(f_1,f_2)}$$
defines an ascending filtration on $H^2_x(X_1,\Omega^1_{X_1})$.
We will call elements of $F_1$ forms with simple poles. The following lemma shows that this definition is in fact independent of the chosen parameter system, meaning that there is a natural isomorphism between $H^2_x(X_1,\Omega^1_{X_1})_{(f_1,f_2)}$ and $H^2_x(X_1,\Omega^1_{X_1})_{(f_1',f_2')}$ for two local parameter systems $\{f_1,f_2\}$ and $\{f_1',f_2'\}$ inducing isomorphisms on the respective filtrations. This isomorphism is given by considering a differential form with respect to the respective defining parameter systems.
\begin{lemma}\label{invariance} \begin{enumerate}
\item[(1)] Let  $x\in X_1$ be a closed point. Then subgroups $F_r\subset H^2_x(X_1,\Omega^1_{X_1})$ are independent of the local parameter system we consider them in. 
\item[(2)] In particular, the subgroup $$F_1=<\frac{df_1}{f_1f_2}\mathcal{O}_{X_1,x}\oplus \frac{df_2}{f_1f_2}\mathcal{O}_{X_1,x}>\subset H^2_x(X_1,\Omega^1_{X_1})$$
is independent of the chosen local parameter system of $\mathcal{O}_{X_1,x}$. We therefore denote it by $\Lambda_x$. 
\item[(3)] The graded pieces $$F_{r+1}/F_r$$ are independent of the chosen local parameter system of $\mathcal{O}_{X_1,x}$.  
\end{enumerate}
\end{lemma}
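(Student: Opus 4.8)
The plan is to establish (1), since (2) and (3) are immediate consequences: $\Lambda_x = F_1$ and the graded pieces $F_{r+1}/F_r$ are built out of the $F_r$. To prove (1), I would fix a closed point $x \in X_1$ with two systems of local parameters $\{f_1, f_2\}$ and $\{f_1', f_2'\}$ in $\mathcal{O}_{X_1,x}$ and show that the change-of-parameters identification of $H^2_x(X_1, \Omega^1_{X_1})$ (coming from Lemma \ref{cal2}, which describes this group intrinsically as a subquotient of a \v{C}ech-type complex independent of the chosen $f_i$) carries $F_r$, defined via $\{f_1, f_2\}$, onto $F_r$ defined via $\{f_1', f_2'\}$. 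Since the two parameter systems differ by an element of $GL_2(\mathcal{O}_{X_1,x})$ together with the relation $f_i' \in \mathfrak{m}_x$, it suffices to treat elementary changes: $(f_1, f_2) \mapsto (u f_1, f_2)$ for a unit $u$, and $(f_1, f_2) \mapsto (f_1 + g f_2, f_2)$ for $g \in \mathcal{O}_{X_1,x}$ (and the symmetric versions), since these generate all admissible changes.

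For the unit rescaling $f_1 \mapsto u f_1$, one computes $d(uf_1) = u\, df_1 + f_1\, du$, so $\frac{d(uf_1)}{(uf_1)^{n_1} f_2^{n_2}} = \frac{1}{u^{n_1-1}} \cdot \frac{df_1}{f_1^{n_1} f_2^{n_2}} + \frac{1}{u^{n_1}} \cdot \frac{du\cdot f_1^{1-n_1}\, }{f_2^{n_2}}$, and one checks both terms lie in $F_{n_1 + n_2 - 1}$ with respect to $\{f_1, f_2\}$ (the second term has strictly smaller pole order in $f_1$, hence is of lower or equal filtration level); the argument is reversible, giving equality of filtration steps. For the shear $f_1 \mapsto f_1 + g f_2$, I would expand $(f_1 + g f_2)^{-n_1}$ — the point is that modulo the relations defining $H^2_x$ (namely $\frac{df_i}{f_j^{n_j}} \mathcal{O}_{X_1,x}$, i.e. forms with poles in only one parameter are zero), the binomial expansion of $(f_1 + g f_2)^{-n_1}$ contributes only finitely many nonzero terms, each of total pole order bounded by the original, so again $F_r$ is preserved. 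The key observation making all of this work is that the filtration level of $\frac{df_1}{f_1^{n_1} f_2^{n_2}}$ is governed by $n_1 + n_2 - 1$ and that every operation in a parameter change either preserves this sum or produces terms killed by the defining relations.

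The main obstacle I anticipate is bookkeeping: one must verify carefully that the binomial/Leibniz expansions do not produce terms of \emph{higher} filtration level, and in particular that cross terms like $du \wedge$ (something) — which a priori could involve $d$ of the other parameter or of an auxiliary function — still decompose into the two "standard" types $\frac{df_1}{f_1^{\bullet} f_2^{\bullet}}$ and $\frac{df_2}{f_1^{\bullet} f_2^{\bullet}}$ after using $\Omega^1_{X_1,x} = \mathcal{O}_{X_1,x}\, df_1 \oplus \mathcal{O}_{X_1,x}\, df_2$ (valid since $\{f_1, f_2\}$ is a regular system of parameters at the smooth point $x$). Writing any $dh = a\, df_1 + b\, df_2$ with $a, b \in \mathcal{O}_{X_1,x}$ and substituting, one reduces every expression to the standard form, and then tracks pole orders. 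Once the elementary changes are handled, composing them (a parameter change is a finite composite of shears and rescalings, by Gaussian elimination over the local ring) finishes (1), and (2), (3) follow formally.
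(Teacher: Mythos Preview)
Your proposal is correct and follows essentially the same approach as the paper: reduce to an elementary change of parameters and expand via a binomial/geometric series, observing that all but finitely many terms die in the quotient defining $H^2_x$, with each surviving term having total pole order at most $n_1+n_2-1$. The paper treats only the shear $f_2 = f_2' + \beta f_1$ (your unit-rescaling case is harmless extra care), and the core computation is the same as yours.

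One point the paper makes more explicit than you do, and which is slightly more than ``bookkeeping'': the two parameter systems give \emph{different} \v{C}ech coverings $\{D(f_1),D(f_2)\}$ and $\{D(f_1),D(f_2')\}$ of $\Spec\,\mathcal{O}_{X_1,x}\setminus\{x\}$, so the comparison is not literally a matter of rewriting a fraction. The paper passes to the common refinement $\{D(f_1),D(f_2 f_2')\}$ and notes that the geometric series $\sum_{n\geq 0}(-\beta f_1/f_2')^n$ genuinely converges in the quotient $\mathcal{O}_{X_1,x}[\tfrac{1}{f_1 f_2 f_2'}]/\mathcal{O}_{X_1,x}[\tfrac{1}{f_2 f_2'}]$, since high powers of $f_1$ eventually kill the pole along $f_1$. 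This is the precise sense in which your formal expansion of $(f_1+gf_2)^{-n_1}$ ``contributes only finitely many nonzero terms'', and it is worth saying where the identity lives rather than leaving it as a formal manipulation.
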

\begin{proof}
It suffices to show the proposition for two parameter systems $(f_1,f_2)$ and $(f_1,f_2')$ and $f_2=f_2'+\beta f_1$. We saw in Section \ref{first thickening} that $H^2_x(X_1,\Omega^1_{X_1})$ can be calculated locally as $\hat{H}^1(\text{Spec}\mathcal{O}_{X_1,x}\setminus\{x\},\Omega^1_{X_1})$ with respect to coverings of of $\text{Spec}\mathcal{O}_{X_1,x}\setminus\{x\}$. Now considering $\frac{df_2}{f_1^{n_1}f_2^{n_2}}\in \hat{H}^1(\text{Spec}\mathcal{O}_{X_1,x}\setminus\{x\},\Omega^1_{X_1})$ for the covering $D(f_1)\cup D(f_2)$ of $\text{Spec}\mathcal{O}_{X_1,x}\setminus\{x\}$, we can pass to the smaller covering $D(f_1)\cup D(f_2f_2')$ of $\text{Spec}\mathcal{O}_{X_1,x}\setminus\{x\}$. With respect to this covering 
 $$\frac{df_2}{f_1^{n_1}f_2^{n_2}}=\frac{df_2'}{f_1^{n_1}f_2'^{n_2}(1+\beta\frac{f_1}{f_2'})^{n_2}}+\frac{\beta df_1}{f_1^{n_1}f_2'^{n_2}(1+\beta\frac{f_1}{f_2'})^{n_2}}$$
is equivalent to 
$$\frac{(\sum_{n=0}^{\infty}(-\beta\frac{f_1}{f_2'})^n)^{n_2}df_2'}{f_1^{n_1}f_2'^{n_2}}+\frac{(\sum_{n=0}^{\infty}(-\beta\frac{f_1}{f_2'})^n)^{n_2}\beta df_1}{f_1^{n_1}f_2'^{n_2}}$$
since $\frac{1}{f_1^{n_1}f_2'^{n_2}(1+\beta\frac{f_1}{f_2'})^{n_2}}$ converges to $\frac{(\sum_{n=0}^{\infty}(-\beta\frac{f_1}{f_2'})^n)^{n_2}}{f_1^{n_1}f_2'^{n_2}}$ in $\mathcal{O}_{X_1,x}[\frac{1}{f_1f_2f_2'}]/\mathcal{O}_{X_1,x}[\frac{1}{f_2f_2'}]$, which lies again in $F_{n_1+n_2-1}$ considering it as an element of $\hat{H}^1(\text{Spec}\mathcal{O}_{X_1,x}\setminus\{x\},\Omega^1_{X_1})$ with respect to the covering $D(f_1)\cup D(f_2')$.
This proves $(1)$. $(2)$ and $(3)$ follow immediately.  
\end{proof} 

In order to prove Proposition \ref{surj2}, we need to prove key Lemma \ref{approximationlemma}. Its proof is inspired by the techniques of \cite{KeS16} from which we cite the following lemma:
\begin{lemma}(\cite[Lemma 10.2]{KeS16})\label{KeScitation} Let $X$ be a noetherian scheme, $E$ an effective Cartier divisor on $X$, and $A$ be an effective Cartier divisor on $E$. Let $\mathcal{F}$ be a coherent $\mathcal{O}_X$-module such that 
$$H^1(X,\mathcal{F}\otimes\mathcal{O}_X(-E))=H^1(E,\mathcal{F}|_E\otimes\mathcal{O}_E(-A))=0.$$
Then the restriction $res_A:H^0(X,\mathcal{F})\rightarrow H^0(A,\mathcal{F}|_A)$ is surjective.
\end{lemma}
By Lemma \ref{invariance} we can talk about the pole order of elements of $H^2_x(X_1,\Omega^1_{X_1})$ independent of the parameter system chosen. In particular, the following lemma makes sense:
\begin{keylemma}\label{approximationlemma} Every element $\gamma\in H^2_x(X_1,\Omega^1_{X_1})$ is equivalent to a sum of forms with simple poles in $H^2(X_1,\Omega^1_{X_1})$. More precisely, the image of $\gamma$ in $$H^2(X_1,\Omega^1_{X_1})\cong \mathrm{coker}(\oplus_{x\in X^{(1)}_1} H^{1}_x(X_1,\Omega^{1}_{X_1})\rightarrow \oplus_{x\in X^{(2)}_1} H^2_x(X_1,\Omega^{1}_{X_1}))$$
is equivalent to a sum $\sum_x\gamma'_x\in \oplus_{x\in X^{(2)}_1} H^2_x(X_1,\Omega^{1}_{X_1})$ with $\gamma'_x\in \Lambda_x\subset H^2_x(X_1,\Omega^{1}_{X_1})$ for every $x\in X^{(2)}_1$. Here $\Lambda_x\subset H^2_x(X_1,\Omega^{1}_{X_1})$ is the subgroup of forms with simple poles defined above.
\end{keylemma}

Key lemma \ref{approximationlemma} says in particular that there is a surjection $\oplus_{x\in X^{(2)}_1}\Lambda_x\twoheadrightarrow H^2(X_1,\Omega^1_{X_1})$. 

Before we start the proof, we introduce the following notation: Let $X$ be a regular Noetherian scheme and $Z$ an effective Cartier divisor on $X$. Let $C$ be a curve in $X$, i.e. an effective $1$-cycle on  $X$. Let 
$$(Z,C)_x:=\text{length}_{\mathcal{O}_{X,x}}(\mathcal{O}_{X,x}/I_Z+I_C)$$
be the intersection multiplicity of $Z$ and $C$ at $x$. We say that $Z$ and $C$ intersect transversally at $x$ if $(Z,C)_x=1$ and if $Z$ and $C$ are regular at $x$. If $Z$ and $C$ intersect transversally everywhere, we denote this by $Z\Cap C$.

\begin{proof}[Proof of Key lemma \ref{approximationlemma}] Without loss of generality we work with $\gamma=\frac{\alpha df_1}{f_1^{n_1}f_2^{n_2}}\in H^2_x(X_1,\Omega^1_{X_1})$. Let $D_1$ be a regular curve containing $x$ and $f_1'$ a local parameter of $D_1$ at $x$.  We consider $\frac{\alpha df_1}{f_1^{n_1}f_2^{n_2}}$ in $H^2_x(X_1,\Omega^1_{X_1})_{(f_1',f_2)}$. By Lemma \ref{invariance}, $\gamma$ is still in $F_{r+1}$ for $r+1=n_1+n_2$. We may assume that it is of the form $\frac{\alpha df_1'}{f_1'^{n_1}f_2^{n_2}}$.

Let $H\subset X_1$ be a hyperplane section and for an integer $d>0$ let $\mathcal{L}(d)=|dH|$ be the linear system of hypersurface sections of degree $d$. 

Now for $d\gg 0$ there exists an $F^1\in \mathcal{L}(d)$ such that 
\begin{enumerate}
\item $x\in F^1$,
\item $F^1\Cap D_1$ at any $y\in F^1\cap D_1$.
\end{enumerate}
For $d'$ sufficiently large relative to $d$, there exists an $F^2\in \mathcal{L}(d')$ such that 
\begin{enumerate}
\item $x\in F^2$,
\item $F^2\Cap D_1$ at any $y\in F^2\cap D_1$,
\item $F^1\cap F^2\cap D_1-x=\emptyset $.
\end{enumerate}
We choose $F^3,...,F^{n_2}$ analogously. Furthermore, we choose $F^{n_2}$ to be of sufficiently high degree so that
$$H^1(X_1,\Omega^1_{X_1}((n_1-1)D_1+F^1+...+F^{n_2})))=H^1(D,\Omega^1_{X_1}(n_1D_1+F^1+...+F^{n_2})|_D\otimes\mathcal{O}_D(-x))=0$$
holds by Serre vanishing. By Lemma \ref{KeScitation}, the last condition implies that the restriction map
$$H^0(X_1,\Omega^1_{X_1}(n_1D_1+F^1+...+F^{n_2}))\xrightarrow{res} H^0(x,\Omega^1_{X_1}(n_1D_1+F^1+...+F^{n_2})\otimes k(x))$$
is surjective. Let $y$ be the generic point of $D_1$.
By construction, the diagram 
$$\begin{xy}
  \xymatrix{
       H^0(X_1,\Omega^1_{X_1}(n_1D_1+F^1+...+F^{n_2}))  \ar[d]_{} \ar@{->>} [r]^-{res} &   \Omega^1_{X_1}(n_1D_1+F^1+...+F^{n_2})\otimes k(x) \ar[d]  \\
   H^1_y(X_1,\Omega^1_{X_1})  \ar[r]^-{d_x} & F_{r+1}/F_rH^2_x(X_1,\Omega^1_{X_1}) 
  }
\end{xy} $$
is commutative and $\frac{\alpha df_1}{f_1^{n_1}f_2^{n_2}}$ lies in $\Omega^1_{X_1}(n_1D_1+F^1+...+F^{n_2})\otimes k(x)$. Notice that the map on the right is well-defined.
This implies that there is a $\gamma\in H^1_y(X_1,\Omega^1_{X_1})$ such that $d_x(\gamma)=\frac{\alpha df_1}{f_1^{n_1}f_2^{n_2}}$. Furthermore for any $x'\in |D_1|-x$, the form $d_{x'}(\gamma)$ has at most simple poles in $f_{2}$ at $x'$, i.e. $d_{x'}(\gamma)\in F_{n_2+1}$. Now we apply the same construction to the form $d_{x'}(\gamma)$ which completes the proof. 
\end{proof} 

\begin{proof}[Proof of Proposition \ref{surj2}] Let $x\in X$ be a closed point and $X_{1,x}$ be the spectrum of the stalk of $\mathcal{O}_{X_1}$ in $x$. The $\check{\text{C}}$ech to derived functor spectral sequence 
$$E_2^{p,q}=\check{H}^p(\mathcal{U},\mathcal{H}^q(X,\mathcal{F}))\Rightarrow H^{p+q}(X,\mathcal{F})$$
induces an edge map
$$\check{H}^i(\mathcal{U},\mathcal{F})\rightarrow H^i(X,\mathcal{F}).$$
Since this edge map is functorial in $\mathcal{F}$, we get a commutative diagram
$$\begin{xy}
  \xymatrix{
        \check{H}^1(X_{1,x}-x,\Omega^1_{X_1}) \ar[d]_{\cong} \ar[r] & \check{H}^1(X_{1,x}-x,\mathcal{K}^M_{2,X_n})    \ar[d]  \\
     H^1(X_{1,x}-x,\Omega^1_{X_1})\cong H^2_x(X_1,\Omega^1_{X_1}) \ar[r]^{} &  H^1(X_{1,x}-x,\mathcal{K}^M_{2,X_n})\cong H^2_x(X_1,\mathcal{K}^M_{2,X_n})
  }
\end{xy} $$
for a closed point $x\in X_1$. We saw in Lemma \ref{cal2} that $ \check{H}^1(X_{1,x}-x,\Omega^1_{X_1})$ is generated by elements of the form $\frac{\alpha_1}{f_1^{n_1}}\frac{df_2}{f^{n_2}_2}+\frac{\alpha_2}{f_2^{n_2}}\frac{df_1}{f^{n_1}_1}$ for a local parameter system $(f_1,f_2)\in \mathcal{O}_{X_1,x}$ and by key Lemma \ref{approximationlemma} we may assume that it has simple poles. The point of the proof is that for forms with simple poles we can write down explicit lifts of these forms to $\CH_1(X)$.

Without loss of generality we consider the form $\alpha df_1/(f_1f_2)$. We lift $\alpha df_1/(f_1f_2)$ to 
$$\overline{V(\tilde{f_1},\tilde{f_2}+\pi^{n-1}\alpha)}-\overline{V(\tilde{f_1},\tilde{f_2})}\in\CH_1(X)$$
where $\tilde{f_1},\tilde{f_2}\in \O_{X,x}$ are lifts of $f_1$ and $f_2$.
We now show that this lift is mapped to $\text{exp}(\alpha df_1/(f_1f_2))=\{f_1,1+\pi^{n-1}\alpha/f_2\}\in H^2_x(X_1,\mathcal{K}^M_{2,X_n})$ by the restriction map. The map 
$$res:\CH_1(X)\rightarrow H^2(X,\mathcal{K}^M_{2,X}),$$
induced by the assumption of the Gersten conjecture, sends the cycle $\overline{V(\tilde{f_1},\tilde{f_2}+\pi^{n-1}\alpha)}-\overline{V(\tilde{f_1},\tilde{f_2})}$ to $(\{\tilde{f_1},\tilde{f_2}+\pi^{n-1}\alpha\},-\{\tilde{f_1},\tilde{f_2}\})$ 
in $$\check{H}^1(X_{\overline{V(\tilde{f_1},\tilde{f_2}+\pi^{n-1}\alpha)}}-\overline{V(\tilde{f_1},\tilde{f_2}+\pi^{n-1}\alpha)},\mathcal{K}^M_{2,X})\oplus \check{H}^1(X_{\overline{V(\tilde{f_1},\tilde{f_2})}}-\overline{V(\tilde{f_1},\tilde{f_2})},\mathcal{K}^M_{2,X}).$$
Finally, the restriction map $$H^2(X,\mathcal{K}^M_{2,X})\rightarrow H^2(X,\mathcal{K}^M_{2,X_n})$$
sends the tuple of $\check{\text{C}}$ech-cycles $(\{\tilde{f_1},\tilde{f_2}+\pi^{n-1}\alpha\},-\{\tilde{f_1},\tilde{f_2}\})$ to $\{\tilde{f_1},1+\pi^{n-1}\alpha/\tilde{f_2}\}\equiv \{f_1,1+\pi^{n-1}\alpha/f_2\}\in \check{H}^1(X_1-x,\mathcal{K}^M_{2,X_n})$. In sum this shows in particular that $$\ker[H^2(X_1,\mathcal{K}^M_{2,X_n})\rightarrow H^2(X_1,\mathcal{K}^M_{2,X_{n-1}})]$$ is in the image of $\CH_1(X)$.

The surjectivity of $\CH_1(X)\rightarrow H^2(X_n,\mathcal{K}^M_{2,X_n})$ now follows by induction: For $n=1$ this is just the surjectivity of $\CH_1(X)\rightarrow \CH_0(X_1)$. For $n>1$, let $\alpha\in H^2(X_1,\mathcal{K}^M_{2,X_n})$. Let $\alpha_{n-1}$ be the image of $\alpha$ in $H^2(X_1,\mathcal{K}^M_{2,X_{n-1}})$. By assumption there is a cycle $Z\in \CH_1(X)$ mapping to $\alpha_{n-1}$. Denote the image of $Z$ in $H^2(X_n,\mathcal{K}^M_{2,X_n})$ by $\alpha_Z$. Now $\alpha-\alpha_Z$ is in the kernel of $H^2(X_1,\mathcal{K}^M_{2,X_n})\rightarrow H^2(X_1,\mathcal{K}^M_{2,X_{n-1}})$ and by the above construction lifts to an element $Z'\in \CH_1(X)$. Now $Z'-Z$ maps to $\alpha$. 
\end{proof} 

\begin{corollary}\label{cormain}
Let $X$ be as in Proposition \ref{surj2} but of arbitrary relative dimension $d$ over $A$. Then, assuming the Gersten conjecture for the Milnor K-sheaf $\mathcal{K}^M_{*,X}$, the map $$res_{X_n}:\CH_1(X)\rightarrow H^d(X_1,\mathcal{K}^M_{d,X_n})$$ is surjective. In particular the map 
$res:\CH_1(X)\rightarrow "\mathrm{lim}_n" H^d(X_1,\mathcal{K}^M_{d,X_n})$
is an epimorphism in $\text{pro-}\text{Ab}$. 
\end{corollary}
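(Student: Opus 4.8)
The plan is to induct on the relative dimension $d$, taking Proposition \ref{surj2} as the base case $d=2$ (the case $d=1$ being the classical surjectivity $\Pic(X)\twoheadrightarrow\Pic(X_n)$). So for $d\geq 3$ I would assume the statement for all smooth projective schemes over $A$ of relative dimension $d-1$ whose Milnor $K$-sheaf satisfies the Gersten conjecture.

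Given $\alpha\in H^d(X_1,\mathcal{K}^M_{d,X_n})$, I would first pass to its image $\bar\alpha\in\CH_0(X_1)$ under the reduction map $H^d(X_1,\mathcal{K}^M_{d,X_n})\to H^d(X_1,\mathcal{K}^M_{d,X_1})\cong\CH_0(X_1)$, which is supported on finitely many closed points. Using a Bertini-type argument together with Serre vanishing as in Section \ref{lefschetzsection}, I would choose a smooth hypersurface section $Y_1\subset X_1$ of sufficiently high degree through this support. Corollary \ref{cord} then supplies two things simultaneously: $Y_1$ lifts to a smooth projective closed subscheme $i:Y\hookrightarrow X$ over $A$ of codimension $1$ (hence of relative dimension $d-1$), and $\alpha$ lies in the image of the Gysin map $gys:H^{d-1}(Y_1,\mathcal{K}^M_{d-1,Y_n})\to H^d(X_1,\mathcal{K}^M_{d,X_n})$ of diagram (\ref{diag1}), say $\alpha=gys(\beta)$.

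Next I would apply the inductive hypothesis to $Y$ — the Gersten conjecture for $\mathcal{K}^M_{*,Y}$ being available in the two standard cases (e.g. by \cite{Ke09} in case (1)) — to obtain a cycle $\gamma\in\CH_1(Y)=\CH^{d-1}(Y)$ with $res_{Y_n}(\gamma)=\beta$. Pushing forward along $i$ and invoking the commutativity of diagram (\ref{diag1}) from Lemma \ref{gysinmap} gives $res_{X_n}(i_*\gamma)=gys(res_{Y_n}(\gamma))=gys(\beta)=\alpha$, so $res_{X_n}$ is surjective. The pro-epimorphism statement then follows formally, exactly as at the end of the proof of Proposition \ref{surj2}, since a level map of pro-systems which is degreewise surjective is an epimorphism in $\text{pro-}\text{Ab}$.

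The genuine content has already been absorbed into Corollary \ref{cord} (which packages the Lefschetz theorem, Proposition \ref{lefschetz}, with the lifting of hypersurface sections) and into Lemma \ref{gysinmap}; what remains is essentially bookkeeping. The step that requires care — and the main obstacle — is the choice of $Y_1$: one needs a smooth hypersurface section of high degree passing through the prescribed zero-dimensional support, and one needs the Gersten conjecture for $\mathcal{K}^M_{*,Y}$ so that Lemma \ref{gysinmap} and the inductive hypothesis really apply to $Y$. Once these are granted, the induction on $d$ collapses the general case to relative dimension $2$.
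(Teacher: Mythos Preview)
Your proposal is correct and follows essentially the same approach as the paper: induction on $d$ with base case Proposition \ref{surj2}, a Bertini argument to choose a smooth high-degree hypersurface section $Y_1$ through the support of $\bar\alpha$, Corollary \ref{cord} to lift $\alpha$ to $H^{d-1}(Y_1,\mathcal{K}^M_{d-1,Y_n})$, and the commutative diagram (\ref{diag1}) from Lemma \ref{gysinmap} to close the induction. The paper differs only in making the Bertini input explicit (citing the theorems of Bloch, Altman--Kleiman and Poonen) and in not dwelling on the Gersten conjecture for $Y$, which you rightly flag as a point requiring care.
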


\begin{proof} We may assume that $d\geq 3$.
Let $\alpha\in H^d(X_1,\mathcal{K}^M_{d,X_n})$. By the Bertini theorems of Bloch, Altman-Kleiman and Poonen (see \cite{Bl71}, \cite{AK79}, \cite{Po08}) we may find a smooth hypersurface section $Y_1$ of $X_1$ containing the the support of the image of $\alpha$ in $\CH_0(X_1)$ under the restriction map $H^d(X_1,\mathcal{K}^M_{d,X_n})\rightarrow H^{d}(X_1,\mathcal{K}^M_{d,X_1})\cong \CH_0(X_1)$. Furthermore we may choose $Y_1$ to be of very high degree in which case it lifts to a smooth projective subscheme $Y$ of $X$ over $A$ by Serre vanishing (see for example \cite[Lem. 2.6]{Lu16}). Then by Corollary \ref{cord} the element $\alpha$ is in the image of the map $H^{d-1}(Y_1,\mathcal{K}^M_{d-1,Y_n})\r H^{d}(X_1,\mathcal{K}^M_{d,X_n})$. It follows from the commutative diagram 
$$\begin{xy}
  \xymatrix{
        \CH_1(X) \ar[r]^-{res_{X_n}}   & H^{d}(X_1,\mathcal{K}^M_{d,X_n})    \\
       \CH_1(Y)   \ar[r]^-{res_{Y_n}}  \ar[u]  &   H^{d-1}(Y_1,\mathcal{K}^M_{d-1,Y_n}) \ar[u] 
  }
\end{xy} $$
(see Lemma \ref{gysinmap}) and Proposition \ref{surj2} and induction on $d$ that $\alpha$ is in the image of $res_{X_n}$.
\end{proof}

\begin{remark}
It can be shown that a more general version of Key lemma \ref{approximationlemma} holds for $X_1$ of arbitrary dimension $d$. Indeed, if $Y_1:=Y_1^{(d-2)}\subset Y_1^{(d-2)}\subset...Y_1^{(1)}\subset X_1$ is a sequence of smooth hypersurface sections of high degree of $X_1$ defined locally by a sequence $(f_1,...,f_{d-2})$ of regular parameters, then by Proposition \ref{lefschetz} there is a surjection 
$$gys:H^{2}(Y_1,\Omega^{1}_{Y_1})\rightarrow H^d(X_1,\Omega^{d-1}_{X_1}).$$ 
This gives a surjection from
$$\mathrm{coker}(\oplus_{x\in Y^{(1)}_1} H^{1}_x(Y_1,\Omega^{1}_{Y_1})\rightarrow \oplus_{x\in Y^{(2)}_1} H^2_x(Y_1,\Omega^{1}_{Y_1}))$$
onto
$$\mathrm{coker}(\oplus_{x\in X^{(d-1)}_1} H^{d-1}_x(X_1,\Omega^{d-1}_{X_1})\rightarrow \oplus_{x\in X^{(d)}_1} H^d_x(X_1,\Omega^{d-1}_{X_1})).$$
For $x\in Y_1\subset X_1$ the map 
$$H^2_x(Y_1,\Omega^{1}_{Y_1})\r H^d_x(X_1,\Omega^{d-1}_{X_1})$$
inducing the above map is given by 
$$\omega\mapsto \omega\wedge \mathrm{dlog}f_{1}\wedge...\wedge \mathrm{dlog}f_{d-2}.$$
By Key lemma \ref{approximationlemma} this implies that $H^d(X_1,\Omega^{d-1}_{X_1})$ is generated by forms with simple poles. The latter means forms as in Lemma \ref{cal2} such that $n_1=...=n_d=1$.
\end{remark}

\section{Open problems}\label{sectionopenproblems}
Let $A$ be an excellent henselian discrete valuation ring with uniformising parameter $\pi$ and residue field $k$. Let $X$ be a smooth projective scheme over Spec$(A)$ of relative dimension $d$. Let $X_n:=X\times_A A/(\pi^n)$, i.e. $X_1$ is the special fiber and the $X_n$ are the respective thickenings of $X_1$. Let us recall the following conjecture by Kerz, Esnault and Wittenberg:
\begin{conj}\label{conj1}\cite[Sec. 10]{KEW16} Assume the Gersten conjecture for the Milnor K-sheaf $\mathcal{K}^M_{*,X}$. The map
$$res^{/p^r}: \CH^{d}(X)/p^r \r "\mathrm{lim}_n" H^{d}(X_1,\mathcal{K}^M_{d,X_n}/p^r)$$ is an isomorphism in pro-Ab if $\mathrm{ch}(\mathrm{Quot}(A))=0$ and if $k$ is perfect of characteristic $p>0$ .
\end{conj}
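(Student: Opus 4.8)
The plan is to follow the structure of the proof of Theorem~\ref{maintheorem}: derive the surjectivity part of Conjecture~\ref{conj1} formally from Corollary~\ref{cormain}, and reduce the injectivity part to relative dimension one by a Lefschetz induction, with Theorem~\ref{conjrel1} as the base case.

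\emph{Surjectivity.} By Corollary~\ref{cormain} the map $\CH^d(X)\to "\mathrm{lim}_n"H^d(X_1,\mathcal{K}^M_{d,X_n})$ is an epimorphism in $\mathrm{pro}\text{-}\mathrm{Ab}$. For each $n$ the reduction sequence $\mathcal{K}^M_{d,X_n}\xrightarrow{p^r}\mathcal{K}^M_{d,X_n}\to\mathcal{K}^M_{d,X_n}/p^r\to0$ together with Grothendieck vanishing ($\dim X_1=d$) shows that $H^d(X_1,\mathcal{K}^M_{d,X_n})\to H^d(X_1,\mathcal{K}^M_{d,X_n}/p^r)$ is surjective, so the composite
$$\CH^d(X)\longrightarrow "\mathrm{lim}_n"H^d(X_1,\mathcal{K}^M_{d,X_n})\longrightarrow "\mathrm{lim}_n"H^d(X_1,\mathcal{K}^M_{d,X_n}/p^r)$$
is an epimorphism in $\mathrm{pro}\text{-}\mathrm{Ab}$; as its target is $p^r$-torsion it factors through $\CH^d(X)/p^r$, and the factorisation is $res^{/p^r}$. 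Hence $res^{/p^r}$ is an epimorphism, and the whole content of the conjecture is its injectivity.

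\emph{Injectivity.} I would argue by induction on the relative dimension $d$, the case $d=1$ being Theorem~\ref{conjrel1} (whose proof uses only the finite-dimensionality over $k$ of the relevant coherent cohomology and so extends to arbitrary perfect $k$). For the inductive step, let $\alpha\in\CH^d(X)/p^r$ map to zero in $"\mathrm{lim}_n"H^d(X_1,\mathcal{K}^M_{d,X_n}/p^r)$; represent it by an effective $1$-cycle, use the Bertini theorems of Bloch, Altman-Kleiman and Poonen to choose a smooth hypersurface section $Y_1\subset X_1$ of very high degree containing its support, and lift $Y_1$ to a smooth projective $Y\subset X$ over $A$ by Serre vanishing, so that $\alpha=\mathrm{gys}(\beta)$ for some $\beta\in\CH_1(Y)/p^r$. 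By the compatibility of Gysin and restriction maps (Lemma~\ref{gysinmap}) one has $res^{/p^r}_X(\mathrm{gys}(\beta))=\mathrm{gys}(res^{/p^r}_Y(\beta))$; hence, if one knew that the Gysin map
$$\mathrm{gys}:"\mathrm{lim}_n"H^{d-1}(Y_1,\mathcal{K}^M_{d-1,Y_n}/p^r)\longrightarrow "\mathrm{lim}_n"H^{d}(X_1,\mathcal{K}^M_{d,X_n}/p^r)$$
is a monomorphism for $Y_1$ of sufficiently high degree, then $res^{/p^r}_Y(\beta)=0$, whence $\beta=0$ by the inductive hypothesis and $\alpha=\mathrm{gys}(\beta)=0$. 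Equivalently, via the exact sequence $0\to\mathcal{K}^M_{d,X\mid X_n}\to\mathcal{K}^M_{d,X}\to\mathcal{K}^M_{d,X_n}\to0$ and a diagram chase with the length-$d$ Gersten complexes, it suffices to prove that $"\mathrm{lim}_n"H^{d}(X,\mathcal{K}^M_{d,X\mid X_n}/p^r)$ is pro-isomorphic to zero; this should be compared with \cite{Lu17'}, where the companion integral statement $H^{d+1}(X,\mathcal{K}^M_{d,X\mid X_n})=0$ is proved and is what underlies the surjectivity above.

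\emph{The main obstacle.} Everything rests on the $\Z/p^r$-coefficient Lefschetz theorem for Milnor $K$-cohomology used above. Proposition~\ref{lefschetz} only handles the associated graded of the $\exp$-filtration of $\mathcal{K}^M_{d,X_n}$, built from $H^\bullet(X_1,\Omega^{d-1}_{X_1})$ via the subquotients $\Omega^{d-1}_{X_1}/B_m$, and even there it gives only a surjection when $d=3$ --- the very first nontrivial case of the induction. To promote it to a pro-monomorphism on $\mathcal{K}^M_{d,X_n}/p^r$ one must control simultaneously the extensions between the successive graded pieces, the pro-limit in $n$, and the derived reduction modulo $p^r$: since $\Omega^{d-1}_{X_1}/B_m$ is killed by $p$, the operation $-\otimes^{L}\Z/p^r$ introduces shifted copies whose transition maps are governed by the Frobenius- and Cartier-type operators that build up the boundary sheaves $B_m\Omega^{d-1}_{X_1}$ appearing in Proposition~\ref{exactsequwitt}, and one needs these to become \emph{contracting} in the pro-category, in the spirit of the $p$-adic logarithm argument of Theorem~\ref{conjrel1}. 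Unlike in relative dimension one, $\mathcal{K}^M_d$ admits no such linearisation, and making the argument work appears to require the comparison with the syntomic complexes $\mathcal{S}_r(d)$ mentioned at the end of the introduction, together with $p$-adic Hodge theory and the known finiteness theorems for $\CH_0$ of varieties over $p$-adic fields. This is precisely why the statement remains conjectural.
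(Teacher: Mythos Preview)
The statement you are asked to prove is labelled \emph{Conjecture} in the paper, and the paper does \emph{not} prove it: Section~\ref{sectionopenproblems} presents it as open, records that surjectivity follows from the main theorem, and discusses the injectivity as an unresolved problem. Your write-up is therefore not in conflict with the paper --- you reach the same conclusion, namely that surjectivity is settled and injectivity is not --- but it should be read as a discussion of the problem rather than as a proof.

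On the details: your surjectivity paragraph is correct and matches the paper's one-line remark after Conjecture~\ref{conj1} (use Corollary~\ref{cormain} and right-exactness of $-\otimes\Z/p^r$, or the vanishing-of-$H^{d+1}$ argument from \cite{Lu17'}). Your sketch of a Lefschetz induction for injectivity is close in spirit to what the paper proposes in Section~\ref{sectionopenproblems}, and you correctly identify the obstruction: one would need the Gysin map
\[
"\mathrm{lim}_n"\,H^{d-1}(Y_1,\mathcal{K}^M_{d-1,Y_n}/p^r)\longrightarrow "\mathrm{lim}_n"\,H^d(X_1,\mathcal{K}^M_{d,X_n}/p^r)
\]
to be a monomorphism (in fact an isomorphism) for $Y_1$ of high degree, and Proposition~\ref{lefschetz} falls one degree short of supplying this on the associated graded. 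The paper makes exactly this point and phrases it as an open Question. The paper also suggests a second, complementary approach to injectivity via an explicit inverse map and a filtration on $\CH_1(X)$ (Conjecture~\ref{conjinversemap}), which you do not mention; conversely, your reformulation via pro-vanishing of $"\mathrm{lim}_n"H^d(X,\mathcal{K}^M_{d,X\mid X_n}/p^r)$ is a reasonable parallel to the idelic vanishing $H^{d+1}(X,\mathcal{K}^M_{d,X\mid X_n})=0$ of \cite{Lu17'}, though the paper does not state it in that form.

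One small caution on your base case: Theorem~\ref{conjrel1} is stated and proved in the paper only for $k$ finite. Your claim that the proof goes through for arbitrary perfect $k$ is plausible (the Mittag--Leffler step uses only that the $H^1(X_1,p^j\mathcal{O}_{X_n})$ are finite-dimensional $k$-vector spaces), but the paper does not assert this, so if you want to use it as the anchor of an induction you should justify it explicitly.
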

In Section \ref{secsurj} we showed that $res$ is surjective. For the fact that the surjectivity of $res$ implies that of $res^{/p^r}$ see \cite[Cor. 5.4]{Lu17'} or note that the same arguments as above go through mod $p^r$ since tensoring with $\Z/p^r\Z$ is right exact. In Section \ref{reldim1} we showed that Conjecture \ref{conj1} holds (under some assumptions on the base) if $d=1$. In order to explain how the method of this article relates to the injectivity of $res^{/p^r}$ in arbitrary dimension, we need to first recall the main ideas of the proof of Kerz, Esnault and Wittenberg of Conjecture \ref{conj1} for $(n,p)=1$, i.e. the following theorem:
\begin{theorem}
Let $p$ be the exponential characteristic of $k$ and let $(n,p)=1$. Then the map
$$res_{X_1}^{/n}: \CH^{1}(X)/n\r \CH_0(X_1)/n$$
is an isomorphism.
\end{theorem}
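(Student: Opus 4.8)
The plan is to deduce the statement from proper base change in \'etale cohomology, the compatibility of cycle class maps with pullback, and the injectivity of the \'etale cycle class map furnished by higher-dimensional class field theory. As $(n,p)=1$, write $\mathbb{Z}/n(d)$ for the locally constant \'etale sheaf $\mu_n^{\otimes d}$; here $X$ is regular, flat and proper over $A$ of Krull dimension $d+1$, so the source of the map is $\CH_1(X)=\CH^{d}(X)$ and its target is $\CH_0(X_1)=\CH^{d}(X_1)$.

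Surjectivity needs no hypothesis on $n$: as recalled at the start of Section \ref{secsurj}, every closed point $x\in X_1$ is a closed point of $X$ and, cutting $\Spec\mathcal{O}_{X,x}$ by suitable lifts of a local parameter system, lifts to an integral curve $C\subset X$ flat and finite over $A$ meeting $X_1$ transversally at $x$, so that $res_{X_1}([C])=[x]$; since closed points generate $\CH_0(X_1)$, the map $res_{X_1}^{/n}$ is surjective.

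For injectivity, consider the commutative square of cycle class maps
$$\begin{xy}\xymatrix{
\CH^{d}(X)/n \ar[r]^-{res_{X_1}^{/n}} \ar[d]_-{cl_X} & \CH_0(X_1)/n \ar[d]^-{cl_{X_1}} \\
H^{2d}_{\et}(X,\mathbb{Z}/n(d)) \ar[r]^-{i^{*}} & H^{2d}_{\et}(X_1,\mathbb{Z}/n(d)),
}\end{xy}$$
the bottom map being the restriction along the regular immersion $i\colon X_1\hookrightarrow X$. Since $X$ is proper over the henselian ring $A$ and $n$ is invertible on $A$, the proper base change theorem shows that $i^{*}$ is an isomorphism. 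Hence $cl_X=(i^{*})^{-1}\circ cl_{X_1}\circ res_{X_1}^{/n}$ factors through $res_{X_1}^{/n}$, so it suffices to prove that the cycle class map $cl_X\colon\CH^{d}(X)/n\to H^{2d}_{\et}(X,\mathbb{Z}/n(d))$ is injective; conversely, injectivity of $cl_X$ together with the surjectivity of $res_{X_1}^{/n}$ then also yields injectivity of $cl_{X_1}$.

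The injectivity of $cl_X$ is the heart of the matter. The model case is that of the special fibre: for $X_1$ smooth projective over the (finite, say) field $k$ the cycle map $cl_{X_1}$ is injective, because on the degree-zero part it is identified, via unramified class field theory (Kato--Saito) and Poincar\'e duality, with the injection $A_0(X_1)/n\hookrightarrow\pi_1^{\mathrm{ab}}(X_1)^{0}/n$, while on degrees it is the tautological map $\CH_0(X_1)/n\to H^0(k,H^{2d}(\bar X_1,\mathbb{Z}/n(d)))$. For $X$ itself one needs the analogous input for the regular arithmetic scheme $X$, proper over the henselian discrete valuation ring $A$: the higher-dimensional (unramified) class field theory of Kato--Saito and the attendant arithmetic duality give the injectivity of $cl_X$ --- equivalently, via Bloch's formula $\CH^{d}(X)/n\cong H^{d}_{\mathrm{Nis}}(X,\mathcal{K}^M_{d,X}/n)$ and the Bloch--Kato identification $\mathcal{K}^M_{d,X}/n\cong\mathcal{H}^{d}_{\et}(\mathbb{Z}/n(d))$, the obstruction to injectivity lives in a Zariski cohomology group of the top \'etale cohomology sheaf $\mathcal{H}^{d+2}_{\et}(\mathbb{Z}/n(d))$, which the duality forces to vanish in the relevant range. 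I expect this last step --- injectivity of $cl_X$ for the arithmetic scheme $X$ --- to be the main obstacle; the reduction to the special fibre by proper base change and the surjectivity statement are comparatively formal.
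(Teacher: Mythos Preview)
The paper does not give its own proof of this theorem; it is quoted as a known result, first proved by Saito--Sato for $k$ finite or separably closed and then by Kerz--Esnault--Wittenberg for arbitrary residue fields. The paper only sketches the KEW strategy: one constructs an explicit inverse to $res$ by lifting zero-cycles on $X_1$ to horizontal one-cycles on $X$, and the work is to show that this lifting map is well-defined modulo $n$ and factors through rational equivalence.

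Your reduction via proper base change is correct and is indeed close in spirit to the Saito--Sato approach: the square commutes, $i^*$ is an isomorphism, and so injectivity of $res_{X_1}^{/n}$ is equivalent to injectivity of $cl_X$. But this is where your argument stops being a proof. You yourself flag that injectivity of $cl_X\colon \CH^d(X)/n\to H^{2d}_{\et}(X,\mathbb{Z}/n(d))$ is ``the main obstacle,'' and the paragraph you offer for it is not an argument: the Kato--Saito class field theory you invoke applies to schemes of finite type over $\mathbb{Z}$ or over finite fields, not to regular proper schemes over a henselian DVR with arbitrary residue field. Even your auxiliary claim that $cl_{X_1}$ is injective rests on unramified class field theory, hence on $k$ being finite, whereas the theorem as stated is for arbitrary $k$. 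In short, you have traded the theorem for an equivalent statement and then asserted that statement.

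This is exactly why the KEW approach is valuable and why the paper highlights it: rather than proving injectivity of a cycle class map abstractly, one builds a concrete section $\CH_0(X_1)/n\to \CH_1(X)/n$ by hand, using a moving argument for rational equivalences on $X_1$. That argument is geometric, requires no duality or class field theory input, and works uniformly over any residue field. Your reduction is a legitimate first step, but to finish along your lines you would essentially have to reprove Saito--Sato (and then still not cover the general residue field case); the KEW method sidesteps this entirely.
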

In the following we simply write $res$ instead of $res_{X_1}^{/n}$. This theorem was first proved by Saito and Sato for $k$ finite or separably closed in \cite{SS10} and then for arbitrary residue fields by Kerz, Esnault and Wittenberg using an idea of Bloch in \cite{KEW16}. The strategy of \cite{KEW16} is to construct a well-defined inverse map to $res$ and can be summed up in the following diagram:
\[\xymatrix{
  \CH_1(X)/n \ar[r]^{res} & \CH_0(X)/n \ar@/_15pt/@{->}[l]_{res^{-1}}  \\
      Z_1^g(X)  \ar[u] \ar[r]_{}  &   Z_0(X_0) \ar[u]^{} \ar[lu]_{lift}
  }\]
Here $Z_1^g(X)$ are one-cycles in good position, i.e. horizontal one-cycles. The map $lift$ is a priori not well-defined. There are many ways to lift a zero-cycle to a horizontal one-cycle (see \cite[Sec. 4]{KEW16}). However it can be shown that mod $n$ such a map exists and that it factors through rational equivalence. This leads to the following conjecture in the mod $p$ case:
\begin{conj}\label{conjinversemap} (Kerz)
There is a filtration $..\subset F_2\subset F_1\subset \CH_1(X)$ and a well defined map
$$\CH_1(X)/F_n \leftarrow H^{d}(X_1,\mathcal{K}^M_{d,X_n})$$
which is inverse to $res_{X_n}$. Furthermore $"\lim_n"F_n\otimes \Z/p^r\Z=0$. \end{conj}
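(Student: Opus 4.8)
The plan is to take for $F_n$ the subgroup of $\CH_1(X)$ generated by the one-cycles $Z$ with $Z|_{X_n}=0$, i.e. the group already denoted $F_n$ in Section \ref{secsurj}. Since restriction to $X_n$ factors through restriction to $X_{n+1}$ one has $F_{n+1}\subseteq F_n$, and since a cycle vanishing on $X_n$ has trivial image in $H^d(X_1,\mathcal{K}^M_{d,X_n})$ the map $res_{X_n}$ descends to a map $\CH_1(X)/F_n\rightarrow H^d(X_1,\mathcal{K}^M_{d,X_n})$; this descended map is surjective by Corollary \ref{cormain}. The whole content of the conjecture is then (a) to construct a two-sided inverse, equivalently to prove that the descended map is injective, and (b) to prove the pro-vanishing $"\mathrm{lim}_n"F_n\otimes\Z/p^r\Z=0$.

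For (a) I would build the candidate inverse $\Phi:H^d(X_1,\mathcal{K}^M_{d,X_n})\rightarrow\CH_1(X)/F_n$ by reusing the constructions of Section \ref{secsurj}. Given $\alpha$, one first applies the Bertini theorems together with the Gysin/Lefschetz reduction (Lemma \ref{gysinmap}, Corollary \ref{cord}, Proposition \ref{lefschetz}) to reduce to relative dimension $2$, then applies Key lemma \ref{approximationlemma} to move the class to a sum of forms with simple poles, and finally lifts each simple-pole term by the explicit recipe $\overline{V(\tilde f_1,\tilde f_2+\pi^{n-1}\alpha)}-\overline{V(\tilde f_1,\tilde f_2)}$ from the proof of Proposition \ref{surj2}. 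The identity $res_{X_n}\circ\Phi=\id$ is exactly the computation made there; the remaining point for bijectivity is that $\Phi\circ res_{X_n}=\id$ on $\CH_1(X)/F_n$, i.e. that the explicit lift of the restriction of a one-cycle $Z$ recovers the class of $Z$ modulo $F_n$.

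\textbf{The main obstacle} is the well-definedness of $\Phi$. Independence of the chosen liftings $\tilde f_i$ modulo $\pi^n$ is the easy calculation already present in the proof of Proposition \ref{surj2}. What is genuinely hard is independence of (i) the Bertini and moving choices and (ii) the cocycle representing $\alpha$: two such choices differ by a boundary in the coniveau complex, i.e. by data in $\oplus_{x\in X_1^{(d-1)}}H^{d-1}_x(X_1,\Omega^{d-1}_{X_1})$, and one must show that the corresponding explicit one-cycle lifts differ by an element of $F_n$ plus a cycle rationally trivial on $X$. This is a \emph{moving lemma for horizontal one-cycles modulo rational equivalence and modulo $F_n$}, in the spirit of Kerz--Saito \cite{KeS16} and Kerz--Esnault--Wittenberg \cite{KEW16}; carrying it out in the present $p$-adic situation, where one no longer has finite prime-to-$p$ coefficients available to trivialise the ambiguity, is precisely what keeps the conjecture open. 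I expect the filtration $F_\bullet$ to be exactly what is needed to absorb the part of the ambiguity that moves within a single thickening, with the graded pieces $F_n/F_{n+1}$ governed by $H^d(X_1,\Omega^{d-1}_{X_1})$ through the exponential sequence.

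For (b), by Proposition \ref{isomorphismprosystems} it suffices to find for each $n$ an $m\geq n$ with $F_m\subseteq p^rF_n$ inside $\CH_1(X)$. A one-cycle vanishing on $X_m$ with $m$ large should, after the reductions of Section \ref{secsurj}, be controlled by the pro-vanishing of suitable $H^1$'s of coherent sheaves on the formal completion (theorem on formal functions), in the same way that the term $"\varprojlim_j"\varprojlim_n H^1(X_1,p^j\mathcal{O}_{X_n})\otimes\Z/p^r\Z$ was shown to be pro-isomorphic to zero in the relative dimension $1$ case (Theorem \ref{conjrel1}); the new ingredient needed in higher relative dimension is the Lefschetz-type control of $H^d(X_1,\Omega^{d-1}_{X_1})$ from Section \ref{lefschetzsection}, which reduces the coherent-cohomology bookkeeping to that of a relative surface.
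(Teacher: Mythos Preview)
The statement is labelled a \emph{conjecture} in the paper and sits in Section~\ref{sectionopenproblems} (``Open problems''); the paper does not prove it. What the paper offers is a discussion of why the constructions of Section~\ref{secsurj} constitute a first step, together with a \emph{candidate} for the filtration. You clearly recognise this yourself: your proposal is a plan rather than a proof, and you explicitly flag the well-definedness of the lifting map $\Phi$ as ``precisely what keeps the conjecture open''. So there is no discrepancy in status between your write-up and the paper.

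On the substance of the plan, your outline of $\Phi$ via Bertini/Lefschetz reduction to a relative surface, Key lemma~\ref{approximationlemma}, and the explicit simple-pole lifts is exactly the mechanism the paper has in mind (see the diagram following Conjecture~\ref{conjinversemap}). The one genuine difference is the choice of filtration. You take $F_n$ to be generated by one-cycles $Z$ on $X$ with $Z|_{X_n}=0$; the paper's proposed candidate is instead
\[
F_n \;=\; \bigl\langle\, i_{C*}F_n(C)\ \bigm|\ C\to X \text{ smooth projective of relative dimension }1\,\bigr\rangle,
\]
i.e.\ the subgroup generated by push-forwards, from all relative curves mapping to $X$, of the relative-dimension-$1$ filtration that you quote from the start of Section~\ref{secsurj}. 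The paper's version is tailored to the way lifts are actually produced (along curves through a closed point), so that the ambiguity in $\Phi$ is more visibly absorbed by $F_n$; your version is more intrinsic but makes that absorption harder to see. Neither choice is known to work, and the paper does not claim otherwise.

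One caution about part~(b) of your plan: asking for $F_m\subseteq p^r F_n$ inside $\CH_1(X)$ is stronger than the pro-vanishing $"\mathrm{lim}_n"F_n\otimes\Z/p^r\Z=0$, which (via Proposition~\ref{isomorphismprosystems}) only requires the image of $F_m$ in $F_n/p^rF_n$ to vanish. Even so, this step is wide open in the paper as well; the formal-functions argument of Theorem~\ref{conjrel1} does not obviously generalise beyond relative dimension~$1$, and the paper does not suggest that the Lefschetz results of Section~\ref{lefschetzsection} suffice for it.
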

The proof of Proposition \ref{surj2} is a first step towards constructing such an inverse map. It can be summed up in the following diagram:
$$\begin{xy}
  \xymatrix{
   & \Lambda_x  \ar[dr]_{} \ar@{.>}[ld]_{lift} & \\
     \CH_1(X)  \ar[d]_{} & & H^2_x(X_1,\Omega_{X_1}^1)   \ar[d]  \\
   H^2(X,\mathcal{K}^M_{2,X})          \ar[rr]^{}  &  &   H^2(X_1,\mathcal{K}^M_{2,X_2}) \ar[d] \\
   & & H^2(X_1,\mathcal{K}^M_{2,X_1})
  }
\end{xy} $$
We expect that the lifts constructed in the course of the proof of Proposition \ref{surj2} are unique up to the conjectured filtration. In the beginning of Section \ref{secsurj} we defined such a filtration for a smooth projective scheme $C$ of relative dimension $1$ over $A$: let $F_n$ be the subgroup of $\CH_1(C)$ generated by all cycles $Z$ vanishing on $C_n$, i.e. $Z|_{C_n}=0$. Furthermore we showed that there is a well-defined map $\gamma_x:{O}_{X_1,x}[\frac{1}{f}]/\mathcal{O}_{X_1,x}\rightarrow \CH_1(X)/F_n$.
Now let $S$ be the set smooth projective schemes $C$ of relative dimension $1$ equipped with a projective $\Spec A$-morphism $i_C:C\r X$. A possible candidate for the filtration in Conjecture \ref{conjinversemap} is the following:
$$F_n:=<i_{C*}F_n|C\in S >.$$

For a similar conjecture see \cite[Conj. 5.7]{Lu17'}. 

We now turn to the relationship with the following question which we recall from the introduction:
\begin{quest}\label{questctopenprob}(\cite[Question 1.4(g)]{Co95})
Let $X_K$ be a smooth projective and connected variety over a $p$-adic field $K$. 
Let $A_0(X_K)$ denote the kernel of the degree map $\mathrm{deg}:\CH_0(X_K)\r \Z$ and $D(X_K)$ its maximal divisible subgroup. Is 
$$A_0(X_K)/D(X_K)\cong \Z^n_p\oplus (\mathrm{finite\; group})$$
for some $n\in \mathbb{N}$?
\end{quest}
Let $A$ be the ring of integers in $K$. Assume that $X_K$ has a smooth and projective model  $X$ of relative dimension $d$ over $A$. For a smooth projective (over $A$) subscheme of codimension one $Y\subset X$ we expect that the map 
$$"\mathrm{lim}_n" H^{d-1}(Y_1,\mathcal{K}^M_{d-1,Y_n}/p^r)\r "\mathrm{lim}_n" H^d(X_1,\mathcal{K}^M_{d,X_n}/p^r)$$
constructed in Lemma \ref{gysinmap} is an isomorphism for $d\geq 3$ and surjective for $d=2$ if we choose $Y$ to be of high degree.
For $n=1$, this follows from class field theory and standard Lefschetz theorems for the \'etale fundamental group. In order to prove this for arbitrary $n$ using Proposition \ref{exactsequwitt}, the Lefschetz theorem of Section \ref{lefschetzsection}, Proposition \ref{lefschetz}, needs to be improved by one degree:
\begin{quest}
Is there a context in which the statement of Proposition \ref{lefschetz} can be improved by one degree? 
\end{quest}
A positive answer to this question and the injectivity of $res^{/p^r}$ for arbitrary dimension would imply that the natural map
$$\CH_1(Y)/p^r\r \CH_1(X)/p^r$$
is bijective for $X$ of relative dimension $d\geq 3$ and surjective for $X$ of relative dimension $d=2$. Since $\CH_1(Y)\r \CH_0(Y_K)$ and $\CH_1(X)\r \CH_0(X_K)$ are surjective, this would would imply that the map
$$\CH_0(Y_K)/p^r\r \CH_0(X_K)/p^r$$
surjective for $\mathrm{dim}X_K=d\geq 2$. 
For a smooth projective curve $C_K$ over $K$ we know that $$A_0(C_K)\cong \Z_p^m\oplus (\text{finite group})$$ for some $m\in\mathbb{N}$ (see \cite{Ma55}). This implies, under the above assumptions, the same result for the $p$-completion of $\CH_1(X_K)$ and therefore a positive answer to Question \ref{questctopenprob}.

The corresponding weak Lefschetz theorem for $n$ prime to $p$ saying that the map
$$\CH_1(Y)/n\r \CH_1(X)/n$$
is bijective for $d\geq 3$ and surjective for $d=2$ is proved in \cite[Cor. 9.6]{SS10}.

\bibliographystyle{siam}
\bibliography{Bibliografie} 
\end{document}